\documentclass[matprg]{svjour3}
\usepackage{subcaption}
\usepackage{amsmath,amsxtra,amsfonts,amscd,amssymb,bm}
\usepackage{algorithm}
\usepackage{algpseudocode}
\usepackage[top=3.5cm,bottom=3.5cm,right=3.5cm,left=3.5cm]{geometry}
\usepackage[mathscr]{eucal}
\usepackage{color}
\usepackage{cite}
\numberwithin{equation}{section}
\usepackage{hyperref}
\usepackage{multirow}

\newcommand{\tsum}{\textstyle\sum}

\def\eqref#1{(\ref{#1})}
\usepackage{epsfig}
\usepackage{calc}
\usepackage{amstext}
\usepackage{amsmath}
\usepackage{multicol}
\usepackage{amsfonts}
\usepackage{amssymb}
\usepackage{mathrsfs}
\usepackage{bm}
\usepackage{xcolor}
\usepackage{comment}
\usepackage[scientific-notation=true]{siunitx}

\newcommand{\bbr}{\Bbb{R}}
\newcommand{\beq}{\begin{equation}}
\newcommand{\eeq}{\end{equation}}
\newcommand{\beqa}{\begin{eqnarray}}
\newcommand{\eeqa}{\end{eqnarray}}
\newcommand{\beqas}{\begin{eqnarray*}}
\newcommand{\eeqas}{\end{eqnarray*}}

\newcommand{\revision}[2]{{\color{black}#2}}

\usepackage{algorithm}
\usepackage{algpseudocode}

\def\vgap{\vspace*{.1in}}
\newcommand{\nn}{\nonumber}

\newcommand{\rf}[1]{(\ref{#1})}
\renewcommand{\top}{{T}}

\def\vgap{\vspace*{.1in}}

\title{A Simple Uniformly Optimal Method  without Line Search for Convex Optimization
}
\author{
  Tianjiao Li
\and
  Guanghui Lan
  \thanks{TL and GL were partially supported by Air Force Office of Scientific Research grant FA9550-22-1-0447.}
}
\institute{
Tianjiao Li\\
  School of Industrial and Systems
    Engineering, Georgia Institute of Technology, Atlanta, GA, 30332.\\
    Email: {\tt tli432@gatech.edu}.
    \vspace{0.2cm}\\
    Guanghui Lan\\
  School of Industrial and Systems
    Engineering, Georgia Institute of Technology, Atlanta, GA, 30332.\\
    Email: {\tt george.lan@isye.gatech.edu}.\\
}

\date{the date of receipt and acceptance should be inserted later}

\begin{document}

\maketitle

\begin{abstract}{
Line search (or backtracking) procedures
have been widely employed into first-order methods for solving convex optimization problems, especially those with unknown problem parameters (e.g., Lipschitz constant). In this paper, we show that line search is superfluous in attaining the optimal rate of convergence for solving a convex optimization problem whose parameters are not given a priori. In particular, we present a novel accelerated gradient descent type algorithm called auto-conditioned fast gradient method (AC-FGM) that can achieve an optimal $\mathcal{O}(1/k^2)$ rate of convergence for smooth convex optimization without requiring the estimate of a global Lipschitz constant or the employment of line search procedures.
    We then extend AC-FGM to solve convex optimization problems with H\"{o}lder continuous gradients and show that it automatically achieves the optimal rates of convergence uniformly for all problem classes with the desired accuracy of the solution as the only input. 
    Finally, we report some encouraging numerical results that demonstrate the advantages of AC-FGM over the previously developed parameter-free methods for convex optimization.}
 \end{abstract}
\keywords{convex optimization \and complexity bounds \and uniformly optimal methods \and parameter-free methods \and line search free methods \and  accelerated gradient descent }
 
\subclass{90C25 \and 90C06 \and 68Q25 }

\section{Introduction}
In this paper, we consider the \revision{basic convex optimization problem}{convex composite optimization problem}
{\color{black}
\begin{align}\label{main_prob}
\Psi^* := \min_{x \in X} \{\Psi(x) := f(x) + h(x)\},
\end{align}
where $X \subseteq \bbr^n$ is a closed convex set and $h: X \rightarrow \mathbb{R}$ is a simple closed convex function.
We assume $f: X \rightarrow \mathbb{R}$ is a closed convex function, represented by a first-order oracle which returns $f(x)$ and $g(x) \in \partial f(x)$ upon request at $x$.} Here, $\partial f(x)$ denotes the subdifferential of $f(\cdot)$ at $x\in X$, and $g(x)$ denotes one of its subgradients. Throughout the paper we assume that the set of optimal solutions $X^*$ is nonempty, and let $x^*$ denote an arbitrary point in $X^*$.
 
Different problem classes of convex optimization have been studied in the literature. 
A problem is said to be nonsmooth if \revision{its objective}{the} function $f$, although not necessarily differentiable, is Lipschtiz continuous, i.e., 
\begin{align}\label{eq_nonsmooth}
|f(x) -f(y)| \leq M\|x-y\|, ~~\forall x, y\in X.
\end{align}
It is well-known that the classic subgradient descent method exhibits an $\mathcal{O}(1/\sqrt{k})$ rate of convergence for nonsmooth convex optimization~\cite{nemirovski1983problem}. Here $k$ denotes the number of calls to the first-order oracle.
This convergence rate can be significantly improved for smooth problems which have a differentiable objective function $f$ with Lipschtiz continuous gradient $g$ such that
 \begin{align}\label{eq:smooth_1}
\|g(x) - g(y)\| \leq L \|x-y\|, ~~\forall x, y \in X. 
 \end{align}
Specifically, while the gradient descent method
exhibits only an $\mathcal{O}(1/k)$ rate of convergence
for smooth problems, the celebrated accelerated gradient descent (AGD) method developed by Nesterov \cite{nesterov1983method}  
has an $\mathcal{O}(1/k^2)$ rate of convergence (see also 
\cite{NemirovskiYudin83-smooth,nemirovski1983problem}
for Nemirovski and Yudin's earlier developments on AGD). 
Between nonsmooth and smooth problems there exists a class of weakly smooth problems with H\"{o}lder continuous gradient, i.e.,
\begin{align}
\|g(x) - g(y)\| \leq L_\nu \|x-y\|^\nu, ~~\text{for some } \nu \in (0,1).
\end{align}
Clearly, when $\nu = 0$ this condition also indicates \eqref{eq_nonsmooth}, and when $\nu = 1$ it reduces to \eqref{eq:smooth_1}. 
Nemirovski and Nesterov~\cite{nemirovskii1985optimal} showed that
the AGD method has an 
\[
\mathcal{O} \left(\left( \tfrac{L_\nu \|x_0-x^*\|^{1+\nu}}{\epsilon}\right)^{\frac{2}{1+3\nu}}\right)
\]
\revision{rate of convergence}{oracle complexity} for solving weakly smooth problems.
In a more recent work, Lan \cite{lan2012optimal} further 
showed that AGD with a properly defined stepsize policy also
exhibits an $\mathcal{O}(1/\sqrt{k})$ rate of convergence
for nonsmooth (and stochastic) problems. Therefore,
 AGD is considered to be a universally optimal method~\cite{lan2012optimal,nesterov2015universal} for solving different classes of convex optimization problems, since its rates of convergence match well 
the lower complexity bounds 
for smooth, nonsmooth, and weakly smooth problems established by Nemirvoski and Yudin in \cite{nemirovski1983problem}.

However, in order to achieve these optimal rates of convergence,
one needs to first know which problem class $f$ belongs to, and then supply AGD with prior knowledge about $f$, including parameters like $L$, $\nu$, and $L_\nu$, as the input of the algorithm. These constants can be unavailable or difficult to compute in many practice scenarios. Additionally, problem classification and parameter estimation over a global scope of $f$ may lead to overly conservative stepsize choices and as a consequence can slow down convergence of the algorithm. To alleviate these concerns, Lan~\cite{Lan13-1}
suggested studying the so-called {\em uniformly optimal} methods, signifying that they can achieve the optimal convergence guarantees across all classes of smooth, weakly smooth, and nonsmooth convex optimization problems without requiring much information on problem parameters. Moreover,
by noticing that the classic bundle-level method \cite{LNN} does not require any problem parameters for nonsmooth optimization, Lan~\cite{Lan13-1} introduced a few novel accelerated bundle-level type methods that are uniformly optimal for convex optimization. These methods can terminate based on the gap between computable upper and lower bounds without using the target accuracy in the updating formula for the iterates. However, these bundle-level type methods require the solution of a more complicated subproblem than AGD in each iteration. In addition, the analysis in \cite{Lan13-1} requires the feasible region $X$ to be bounded, which may not hold in some applications.
In an effort to address these issues, Nesterov in an important work \cite{nesterov2015universal} presented a new fast gradient method (FGM) obtained by incorporating a novel line search (or backtracking) procedure \cite{armijo1966minimization} into the AGD method.
He showed that FGM achieves uniformly (or universally) the optimal rate of convergence for solving smooth, weakly smooth, and nonsmooth convex optimization problems with the accuracy of the solution as the only input. While the cost per iteration of FGM might be higher than AGD due to the line search procedure,
the total number of calls to the first-order oracle will still be in the same order of magnitude as AGD up to \revision{a}{an additive} constant factor.
Further developments for adaptive AGD methods with line search can be found in \cite{roulet2017sharpness,liang2023unified, renegar2022simple,grimmer2023optimal,lu2023accelerated}, among others. 

Due to the wide deployment of first-order methods
in data science and machine learning,
the past few years have witnessed a growing interest in the development of easily implementable
parameter-free first-order methods that have guaranteed fast rates of convergence.  
One notable line of research is to eliminate the incorporation of line search procedures in first-order methods to further speed up their convergence and reduce the cost per iteration.
Specifically, in some interesting works, \cite{chaudhuri2009parameter, streeter2012no, orabona2016coin, cutkosky2017online, cutkosky2018black, orabona2020tutorial, mhammedi2020lipschitz} studied the problem of parameter-free \revision{}{online} regret minimization that pertains to nonsmooth convex optimization, \cite{carmon2022making,defazio2023learning, ivgi2023dog} developed adaptive line-search free subgradient type methods for nonsmooth objectives \revision{}{in deterministic or stochastic settings}, while \cite{levy2017online, li2019convergence, malitsky2019adaptive, malitsky2023adaptive, khaled2023dowg,latafat2023adaptive,orabona2023normalized} established the convergence guarantees for this type of methods for \revision{}{deterministic} smooth objectives. However, all the aforementioned methods at most match the convergence rate of non-accelerated (sub)gradient descent in the worst case, thus failing to achieve the optimal rates of convergence for smooth and weakly smooth problems with $\nu \in (0,1]$. It is worth noting that there exist a few accelerated variants of adaptive gradient type methods with $\mathcal{O}(1/k^2)$ rate for smooth objectives \cite{levy2018online, ene2021adaptive}. However, these methods require the stepsize to either depend on the diameter of a bounded feasible region or be fine-tuned according to the unknown initial distance $\|x_0-x^*\|$.
To the best of our knowledge, no optimal first-order method currently exists that meets the following criteria for solving smooth optimization problems with an unknown Lipschitz constant \(L\): it offers simple subproblems, does not assume the feasible region to be bounded, and does not require line search procedures. \revision{}{As pointed out by Malitsky and Mishchenko~\cite{malitsky2019adaptive} and Orabona~\cite{orabona2023normalized}, this remains an open problem.} Furthermore, there is no such method that is uniformly optimal for smooth, weakly smooth, and nonsmooth convex optimization.


In this paper, we attempt to address the aforementioned unresolved issue in the development of optimal methods for convex optimization. Our major contributions are briefly summarized as follows.
    Firstly, we introduce a novel first-order algorithm, called  {\em Auto-Conditioned Fast Gradient Method (AC-FGM)}, which can achieve the optimal $\mathcal{O}(1/k^2)$ convergence rate for smooth convex optimization without knowing any problem parameters or resorting to any line search procedures. The algorithmic framework, being as simple as the AGD method, can completely adapt to the problem structure.
    The major novelty of AC-FGM lies in the management of two extrapolated sequences of search points, with one serving as the ``prox-centers'' and the other used for gradient computation, respectively. This design plays an essential role in getting rid of line search required by \cite{nesterov2015universal} 
    and removing the assumption of a bounded feasible region 
    used in \cite{Lan13-1}. Moreover, we propose a novel stepsize policy that is highly adaptive to the local smoothness level, which can lead to more efficient empirical performance compared with other existing adaptive methods.
    Secondly, we show that AC-FGM can be extended to solve convex problems with H\"{o}lder continuous gradients,  requiring no prior knowledge of problem parameters except for the desired accuracy of the solution. Our analysis demonstrates that AC-FGM is uniformly optimal for all smooth, weakly smooth, and nonsmooth objectives with $\nu 
\in [0,1]$. 
Thirdly, we confirm the theoretical advantages of AC-FGM through a series of numerical experiments. Our results show that AC-FGM outperforms the previously developed parameter-free methods, such as the adaptive gradient descent and Nesterov's FGM, across a wide range of testing problems.

It is worth noting that we focus on convex problems with the function optimality gap as the termination criterion for our algorithms in this paper. 
We do not consider strongly convex or nonconvex problems. However, Lan, Ouyang and Zhang in a concurrent paper~\cite{LanOuyangZhang2023} show that by using AC-FGM as 
a subroutine in their new algorithmic framework, one can achieve tight complexity bounds to minimize (projected) gradients for convex, strongly convex and nonconvex problems in a parameter-free manner. Therefore, the results reported in this work may have an impact in the broader area of parameter-free optimization methods.

The remaining part of the paper is organized as follows. In Section~\ref{sec_smooth}, we propose AC-FGM and prove its optimal convergence guarantees \revision{for solving smooth convex problems}{when $f$ is a smooth convex function}. In Section~\ref{sec_holder}, we extend AC-FGM to solve problems \revision{with H\"{o}lder continuous gradients}{where $f$ has H\"{o}lder continuous gradients} and establish its uniform optimality. 
Finally, in Section~\ref{sec_numerical}, we provide numerical experiments that demonstrate the empirical advantages of AC-FGM. 

\subsection{Notation \revision{}{and preliminaries}}
In this paper, we use the convention that $\tfrac{0}{0} = 0$, and $\tfrac{a}{0} = + \infty$ if $a > 0$. Unless stated otherwise, we let $\langle \cdot, \cdot\rangle$ denote the Euclidean inner product and $\|\cdot\|$ denote the corresponding Euclidean norm ($\ell_2$-norm). Given a vector $x \in \bbr^n$, we denote its $i$-th entry by $x^{(i)}$. Given a matrix $A$, we denote its $(i,j)$-th entry by $A_{i,j}$. We use $\|A\|$ to denote the spectral norm of matrix $A$. We let $\lambda_{\max}(A)$ and $\lambda_{\min}(A)$ denote the largest and smallest eigenvalue of a self-adjoint matrix $A$.

\revision{}{Throughout the paper, we assume the function $h$ is ``prox-friendly'', meaning that the proximal-mapping problem
\begin{align*}
\arg \min_{z \in X} \left\{\langle \xi, z \rangle +  h(z) + \tfrac{1}{2\eta}\|y - z\|^2\right\}, \quad \text{where $\eta > 0$, $\xi \in \bbr^n$, $y\in X$,}
\end{align*}
is easily computable either in closed form or by some efficient computational procedure.
}

\section{AC-FGM for smooth convex problems}\label{sec_smooth}
In this section, 
we first consider \revision{the case}{solving problem \eqref{main_prob}} when $f$ is smooth, satisfying condition~\eqref{eq:smooth_1}
or equivalently,
\begin{align}\label{eq:smooth_2}
    \tfrac{1}{2L} \|g(y) - g(x)\|^2 \leq f(y) - f(x) - \langle g(x), y-x\rangle \leq \tfrac{L}{2}\|y - x\|^2, \quad \forall x, y \in X.
\end{align}
Our goal is to present a new parameter-free AGD type optimal algorithm that does not 
require line search for solving this class of problems.

We start to introduce the basic algorithmic framework of the Auto-Conditioned Fast Gradient Method (AC-FGM). 
\begin{algorithm}[H]\caption{Auto-Conditioned Fast Gradient Method (AC-FGM)}\label{alg1}
	\begin{algorithmic}
		\State{\textbf{Input}: initial point $z_0 = y_0 = x_0$, nonnegative parameters $\beta_t \in (0, 1)$, $\eta_t \in \bbr_+$ and $\tau_t \in \bbr_+$.}
		\For{$t=1,\cdots, k$}
		\State{
  \begin{align}
  z_t &= \arg \min_{z \in X} \left\{\revision{\eta_t \langle g(x_{t-1}), z \rangle}{\eta_t[\langle g(x_{t-1}), z \rangle + h(z)]} + \tfrac{1}{2}\|y_{t-1} - z\|^2\right\}, \label{prox-mapping}\\
  y_t &= (1-\beta_t) y_{t-1} + \beta_t z_t, \label{prox-center}\\
  x_t &= (z_{t} + \tau_t x_{t-1})/({1 + \tau_t}). \label{output_series_2}
  \end{align}
  }
		\EndFor
	\end{algorithmic}
\end{algorithm}
As shown in Algorithm~\ref{alg1}, the basic algorithmic scheme is conceptually simple. It involves three intertwined sequences of search points, $\{x_t\}$, $\{y_t\}$, and $\{z_t\}$, where $\{x_t\}$ and $\{y_t\}$ are two weighted average sequences of $\{z_t\}$. The sequence $\{x_t\}$ is used for gradient computation and also represents the output solutions, while $\{y_t\}$ is the sequence of ``prox-centers'' of the prox-mapping problems in \eqref{prox-mapping}. 
The search point $z_t$ is updated according to \eqref{prox-mapping}
by minimizing \revision{}{the summation of $h$ and} a linear approximation of $f$ at $x_{t-1}$, but not moving too far away from the prox-center $y_{t-1}$ with $\eta_t$
as the stepsize. \revision{}{Notice that in \eqref{prox-center}-\eqref{output_series_2}, we present the two weighted average sequences, $\{x_t\}$ and $\{y_t\}$, in different formats to simplify the algorithm analysis. Additionally, the parameter $\tau_t$ in \eqref{output_series_2} has a specific meaning when expressing AC-FGM in its primal-dual form (see \eqref{game_step_3} below)}.


To illustrate some basic ideas behind the design of AC-FGM,
let us compare it with a few other well-known first-order algorithms in the literature.
\begin{itemize}
    \item \emph{Nesterov's accelerated gradient method}: Nesterov's accelerated gradient method  can be expressed using three sequences of search points (see, e.g., Section 3.3 of \cite{LanBook2020}):
    \begin{align*}
    \revision{y_t}{x_t} &= (1-q_t) \revision{x_{t-1}}{y_{t-1}} + q_t z_{t-1},\\
    z_t &=\arg \min_{z \in X} \left\{\revision{\eta_t \langle g(y_{t}), z \rangle}{\eta_t [\langle g(x_{t}), z \rangle + h(z)]} + \tfrac{1}{2}\|z_{t-1} - z\|^2\right\},\\
    \revision{x_t}{y_t} &= (1-\alpha_t) \revision{x_{t-1}}{y_{t-1}} + \alpha_t z_t.
    \end{align*}
    \revision{Notably, the gradients are computed at $\{y_t\}$ rather than the output solutions $\{x_t\}$.}{Notably, the gradients are computed at $\{x_t\}$, which is also a weighted average sequence of $\{z_t\}$. However, the method uses} $\{z_t\}$ as the prox-centers, while AC-FGM uses the sequence $\{y_t\}$, \revision{}{another dynamically weighted average sequence} of $\{z_t\}$, as the prox-centers. 
    \item \emph{Gradient extrapolation method (GEM)}: GEM \cite{lan2018random, ilandarideva2023accelerated} is another method related to AC-FGM, which carries a particular duality relationship with Nesterov's accelerated gradient method and also achieves the optimal rate of convergence for smooth convex optimization. The algorithm follows the steps
    \begin{align*}
    \tilde g_t &= g(x_{t-1}) + \alpha_t(g(x_{t-1}) - g(x_{t-2})) ,\\
    z_t &=\arg \min_{z \in X} \left\{\revision{\eta_t \langle \tilde g_t, z \rangle}{\eta_t [\langle \tilde g_t, z \rangle + h(z)]} + \tfrac{1}{2}\|z_{t-1} - z\|^2\right\},\\
    x_t &= (z_{t} + \tau_t x_{t-1})/({1 + \tau_t}).
    \end{align*}
    Notice that the last step of GEM is the same as AC-FGM, which generates search points for both gradient computation and solution output. The difference between these two methods exists in that GEM utilizes extrapolation in the dual space as in the first step. It is noteworthy that this dual extrapolation idea is also used in the operator extrapolation (OE) method \cite{kotsalis2022simple, kotsalis2022simple1} for solving variational inequalities (VIs).
    \item \emph{Golden ratio method for variational inequalities}: The idea of using an extrapolated sequence $\{y_t\}$ as the prox-centers was introduced in the Golden ratio method \cite{malitsky2020golden, alacaoglu2023beyond} for VIs:
    \begin{align*}
    z_t &= \arg \min_{z \in X} \left\{\eta_t \langle F(z_{t-1}), z \rangle + \tfrac{1}{2}\|y_{t-1} - z\|^2\right\}, \\
  y_t &= (1-\beta_t) y_{t-1} + \beta_t z_t,
    \end{align*}
    where $F(\cdot)$ is the VI operator. It is shown in \cite{malitsky2020golden, alacaoglu2023beyond} that the Golden ratio method is a fully adaptive approach for smooth VIs without requiring the line search procedures. However, one can only achieve an $\mathcal{O}(1/k)$ rate for smooth VIs, which differs from the optimal convergence rate for smooth convex optimization. Therefore, we construct another sequence of search points $\{x_t\}$ as in GEM and shift the gradient estimations to accelerate the convergence rate to $\mathcal{O}(1/k^2)$. In summary, the relationship between AC-FGM and the golden ratio method resembles the relationship between GEM and the OE method in that the former approaches (i.e., AC-FGM and GEM) accelerate the rate of convergence
    for the latter methods (i.e., Golden ratio method and OE)
    applied to smooth convex optimization.
\end{itemize}

\revision{}{To further illustrate the intuition behind AC-FGM, we consider the following saddle-point reformulation of \eqref{main_prob}, i.e., 
\begin{align*}
\Psi^* := \min_{x\in X}\left\{ \max_{\varsigma \in \bbr^n} \left\{\langle x, \varsigma\rangle - J_f(\varsigma) \right\} + h(x)\right\}.
\end{align*}
Here we assume for similicity $f$ is well-defined in $\bbr^n$ and $J_f: \bbr^n \rightarrow \bbr$ is the Fenchel conjugate function of $f$. Due to the $L$-smoothness of $f$, it is clear that $J_f$ is strongly convex with modulus $1/L$. Therefore, we can define the Bregman divergence associated with $J_f$ according to
\begin{align*}
D_f(\tilde \varsigma, \varsigma)= J_f(\varsigma)- [J_f(\tilde \varsigma) + \langle J_f'(\tilde \varsigma), \varsigma - \tilde \varsigma \rangle],\;\;\varsigma,\tilde \varsigma\in \bbr^n, 
\end{align*}
where $J_f'(\tilde \varsigma) \in \partial J_f(\tilde \varsigma)$. Then we consider the (generalized) prox-mapping associated with this Bregman divergence,
\begin{align}\label{gendprox}
\arg\max_{\varsigma \in \bbr^n} \Big\{
\langle  \tilde x, \varsigma \rangle - J_f(\varsigma) - \tau D_f(\tilde \varsigma, \varsigma)\Big\},\;\;  \tilde x, \tilde \varsigma\in \bbr^n.
\end{align}
As shown in Lemma 1 of \cite{lan2015optimal} (see also Lemma 3.6 of \cite{LanBook2020}),
the maximizer of \rf{gendprox} is the gradient of $f$ at a certain  $\bar x\in \bbr^n$, specifically,
\[
\arg\max_{\varsigma \in \bbr^n} \Big\{
\langle \tilde x, \varsigma \rangle - J_f(\varsigma) - \tau D_f(\tilde \varsigma, \varsigma)\Big\} = g(\bar x), ~~\text{where $\bar x = \tfrac{\tilde x + \tau J_f'(\tilde \varsigma)}{1 + \tau}$}.
\]
Using the above observation, we can rewrite AC-FGM in Algorithm~\ref{alg1}
in a primal-dual form. Starting from $x_0 = y_0 = z_0$ and $\varsigma_0=g(x_0)$, AC-FGM iteratively updates $(z_t,y_t,\varsigma_t)$ by
\begin{align}
z_t &= \arg \min_{z \in X} \left\{\revision{\eta_t \langle \varsigma_{t-1}, z \rangle}{\eta_t[\langle \varsigma_{t-1}, z \rangle + h(z)]} + \tfrac{1}{2}\|y_{t-1} - z\|^2\right\}, \label{game_step_1}\\
  y_t &= (1-\beta_t) y_{t-1} + \beta_t z_t, \label{game_step_2}\\
\varsigma_{t} &= \arg \max_{\varsigma\in \bbr^n} \left\{\langle z_t, \varsigma \rangle - J_f(\varsigma) - \tau_t D_f(\varsigma_{t-1}, \varsigma)\right\}. \label{game_step_3}
\end{align}
With a selection of $J_f'(\varsigma_{t-1}) = x_{t-1}$ in $D_f(\varsigma_{t-1}, \varsigma)$, 
we have $\varsigma_t = g(x_t)$ with $x_t = (z_t + \tau_t x_{t-1}) / (1 + \tau_t)$ which is the definition of $x_t$ in \eqref{output_series_2}. From a game point of view, the above expression can be intuitively interpreted as follows: In \eqref{game_step_1}-\eqref{game_step_2}, the ``smart'' primal player determines the next action $z_t$ based on the last action of the dual player $\varsigma_{t-1}$, but not too far away from the weighted average $y_{t-1}$ of the past decisions; in \eqref{game_step_3}, the dual player determines the next action $\varsigma_t$ based on the latest action of the primal player $z_t$, but not too far away from the last decision $\varsigma_{t-1}$.

Similarly, one can show that Nesterov's accelerated gradient method discussed above
has the following primal-dual update (see \cite[Section 2.2]{lan2015optimal} and \cite[Section 3.4]{LanBook2020}):
\begin{align}
\tilde z_{t}&= z_{t-1} + \lambda_t (z_{t-1}-z_{t-2}), \label{def_txt}\\
\varsigma_{t} &= \arg\max_{\varsigma\in \bbr^n} \left\{\langle \tilde z_t, \varsigma \rangle - J_f(\varsigma) - \tau_t D_f(\varsigma_{t-1}, \varsigma)\right\},
\label{def_yt}\\
z_{t} &=  \arg\min_{z \in X}\left\{\eta_t[\langle \varsigma_t,x\rangle + h(z)] +\tfrac{1}{2}\|z_{t-1}-z\|^2 \right\},
\label{def_xt_GEM}
 \end{align}
 with $\tau_t = (1-\alpha_t)/\alpha_t$ and $\lambda_t=\alpha_{t-1}(1-\alpha_t)/\alpha_t$ when $\alpha_t = q_t$. The principle difference between this method and AC-FGM exists in that here, the dual player is consider to be ``smart'' by determining the next action $\varsigma_t$ based on $\tilde z_t$, the past decision of the primal player with a ``correction'' using extrapolation. Moreover, the GEM algorithm can also be expressed in a similar primal-dual form, by switching the role of primal and dual variables in \eqref{def_txt}-\eqref{def_xt_GEM}.}


In the remaining part of this section, we establish the convergence guarantees for AC-FGM and specify the selection of algorithmic parameters $\eta_t$, $\beta_t$ and $\tau_t$. To ensure that AC-FGM operates in a \emph{problem-parameter-free} manner, we suppose that we do not possess prior knowledge of the smoothness parameter $L$ in conditions~\eqref{eq:smooth_1} and \eqref{eq:smooth_2}. Instead, we estimate the local smoothness level in each iteration. In the first iteration ($t=1$), we select an initial stepsize $\eta_1$ such that $x_1 \neq x_0$ and calculate $L_1$ as
    \begin{align}\label{def_L_1}
L_1 := \frac{\|g(x_1) - g(x_0)\|}{\|x_1-x_0\|}.
\end{align}
For subsequent iterations ($t\geq 2$), we calculate
\begin{align}\label{def_L_t}
L_t:=  \begin{cases}
      ~0, &\text{if}~~f(x_{t-1}) - f(x_t) - \langle g(x_t), x_{t-1} - x_t\rangle = 0,\\
       \frac{\|g(x_t) - g(x_{t-1})\|\revision{_*}{}^2}{2[f(x_{t-1}) - f(x_t) - \langle g(x_t), x_{t-1} - x_t\rangle]}, & \text{if}~~f(x_{t-1}) - f(x_t) - \langle g(x_t), x_{t-1} - x_t\rangle > 0.
      \end{cases}
\end{align}
Notice that this approach allows us to calculate $L_t$ whenever a new search point $x_t$ is obtained in each iteration. Because the search point $x_t$ is calculated using the parameters $\eta_t$ and $\tau_t$, to obtain the line-search free property, the parameters $\eta_t$ and $\tau_t$ should be chosen according to the local estimations $L_1, ..., L_{t-1}$ but not $L_t$.

The following two results (i.e., Proposition~\ref{prop_1} and Theorem~\ref{main_theorem}) characterize the convergence behavior and the line-search free property of Algorithm~\ref{alg1}. It should be noted that our analysis differs significantly from the existing analyses of the golden ratio method and gradient extrapolation method, thus appears to be nontrivial. Proposition~\ref{prop_1} below provides an important recursive relationship for the iterates of the AC-FGM algorithm. 
\begin{proposition}\label{prop_1}
Assume the parameters $\{\tau_t\}$, $\{\eta_t\}$ and $\{\beta_t\}$ satisfy
\begin{align}
\tau_1 &= 0, ~\beta_1 = 0,~\beta_t = \beta > 0,~t\geq 2 \label{cond_3}\\
    \eta_2 &\leq \min\left\{\revision{2(1-\beta)\eta_1}{(1-\beta)\eta_1}, \revision{\tfrac{\beta}{2L_1}}{\tfrac{1}{4L_1}} \right\},\label{cond_1}\\ 
    \eta_t &\leq \min\left\{2(1-\beta)^2 \eta_{t-1}, \revision{\tfrac{\beta \tau_{t-1}}{4L_{t-1}}}{\tfrac{ \tau_{t-1}}{4L_{t-1}}}, \tfrac{\tau_{t-2}+1}{\tau_{t-1}}\cdot\eta_{t-1}\right\}, ~t \geq 3
    \label{cond_4}
\end{align}
where $L_1$ and $L_t, ~t\geq 2$ are defined in \eqref{def_L_1} and \eqref{def_L_t}, respectively. We have for any $z \in X$,
\begin{align}\label{eq9_3}
&\tsum_{t=1}^k \eta_{t+1}\left[\tau_t f(x_{t})+ \langle g(x_t), x_t - z\rangle - \tau_t f(x_{t-1})\revision{}{+ h(z_{t}) - h(z)} \right] + \tfrac{\eta_2}{2\eta_{1}} \left[\|z_1 - y_1\|^2 + \|z_2 - z_1\|^2 \right]\nn\\
& \leq \tfrac{1}{2\beta}\|y_1 - z\|^2 - \tfrac{1}{2\beta}\|y_{k+1}-z\|^2 + \tsum_{t=2}^{k+1}\Delta_{t},
\end{align}
where \revision{}{for $t\geq 2$,}
\begin{align*}
\color{black}
\Delta_t :=  
       \eta_t \langle g(x_{t-1}) - g(x_{t-2}), z_{t-1} - z_t \rangle - \tfrac{\eta_t \tau_{t-1}\|g(x_{t-1}) - g(x_{t-2})\|^2}{2L_{t-1}} - \tfrac{\|z_t - y_{t-1}\|^2}{2}.
\end{align*}
\end{proposition}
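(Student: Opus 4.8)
The plan is to derive \eqref{eq9_3} from the three-point property of the prox-mapping \eqref{prox-mapping}, but telescoping over the prox-centers $\{y_t\}$ instead of over $\{z_t\}$. The first move is to rewrite the bracketed summand on the left of \eqref{eq9_3}. From \eqref{output_series_2} (with $\tau_1=0$) one has $z_t = x_t + \tau_t(x_t-x_{t-1})$ and $x_1=z_1$, so $\langle g(x_t),z_t-z\rangle = \langle g(x_t),x_t-z\rangle + \tau_t\langle g(x_t),x_t-x_{t-1}\rangle$. The key point is that \eqref{def_L_t} is precisely the identity $f(x_{t-1})-f(x_t)-\langle g(x_t),x_{t-1}-x_t\rangle = \tfrac{1}{2L_t}\|g(x_t)-g(x_{t-1})\|^2$ (with $\tfrac00=0$ covering the degenerate case), which converts the convexity slack into an equality and shows the summand equals $\langle g(x_t),z_t-z\rangle + h(z_t)-h(z) - \tfrac{\tau_t}{2L_t}\|g(x_t)-g(x_{t-1})\|^2$. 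Carried with weight $\eta_{t+1}$, the last term will become the middle term of $\Delta_{t+1}$.

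Next I would produce the telescoping $\tfrac{1}{2\beta}\|y_1-z\|^2 - \tfrac{1}{2\beta}\|y_{k+1}-z\|^2$. Applying the three-point lemma to \eqref{prox-mapping} at step $t+1$ gives
\[ \eta_{t+1}\big[\langle g(x_t),z_{t+1}-z\rangle + h(z_{t+1})-h(z)\big] \le \tfrac12\|y_t-z\|^2 - \tfrac12\|y_t-z_{t+1}\|^2 - \tfrac12\|z_{t+1}-z\|^2, \]
and the convex-combination identity for \eqref{prox-center}, namely $\tfrac12\|z_{t+1}-z\|^2 = \tfrac{1}{2\beta}\|y_{t+1}-z\|^2 - \tfrac{1-\beta}{2\beta}\|y_t-z\|^2 + \tfrac{1-\beta}{2}\|z_{t+1}-y_t\|^2$, turns the right-hand side into $\tfrac{1}{2\beta}\|y_t-z\|^2 - \tfrac{1}{2\beta}\|y_{t+1}-z\|^2 - \tfrac{2-\beta}{2}\|z_{t+1}-y_t\|^2$; summing over $t=1,\dots,k$ telescopes the $y$-distances.

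Because my reduced summand carries $z_t,h(z_t)$ while this bound carries $z_{t+1},h(z_{t+1})$, I must control the discrepancy $\eta_{t+1}[\langle g(x_t),z_t-z_{t+1}\rangle + h(z_t)-h(z_{t+1})]$. Bounding $h(z_t)-h(z_{t+1})$ by convexity with the subgradient $h'(z_t)=-g(x_{t-1})-\tfrac{1}{\eta_t}(z_t-y_{t-1})$ read off from the step-$t$ optimality condition gives
\[ \eta_{t+1}\big[\langle g(x_t),z_t-z_{t+1}\rangle + h(z_t)-h(z_{t+1})\big] \le \eta_{t+1}\langle g(x_t)-g(x_{t-1}),z_t-z_{t+1}\rangle - \tfrac{\eta_{t+1}}{\eta_t}\langle z_t-y_{t-1},z_t-z_{t+1}\rangle. \]
The first term on the right is the leading term of $\Delta_{t+1}$, and together with the $-\tfrac12\|z_{t+1}-y_t\|^2$ split off from the telescoping residual $-\tfrac{2-\beta}{2}\|z_{t+1}-y_t\|^2$ it completes $\tsum_{t=2}^{k+1}\Delta_t$.

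The main obstacle, and where the stepsize conditions enter, is to show that the collected leftover
\[ R := \tsum_{t=1}^k\Big[-\tfrac{\eta_{t+1}}{\eta_t}\langle z_t-y_{t-1},z_t-z_{t+1}\rangle - \tfrac{1-\beta}{2}\|z_{t+1}-y_t\|^2\Big] \le -\tfrac{\eta_2}{2\eta_1}\big[\|z_1-y_1\|^2+\|z_2-z_1\|^2\big]. \]
For $t\ge2$ I would substitute $z_t-z_{t+1}=(1-\beta)(z_t-y_{t-1})-(z_{t+1}-y_t)$ from \eqref{prox-center}; after a Young split the $\|z_t-y_{t-1}\|^2$ contributions cancel and the summand reduces to $\big[\tfrac{\eta_{t+1}}{4(1-\beta)\eta_t}-\tfrac{1-\beta}{2}\big]\|z_{t+1}-y_t\|^2\le 0$ exactly under $\eta_{t+1}\le 2(1-\beta)^2\eta_t$ from \eqref{cond_4}. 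For $t=1$, using $y_1=y_0$ (since $\beta_1=0$) and polarization, the $\|z_1-y_1\|^2$ and $\|z_2-z_1\|^2$ terms cancel against the right-hand side, leaving $\big[\tfrac{\eta_2}{2\eta_1}-\tfrac{1-\beta}{2}\big]\|z_2-y_1\|^2\le 0$ under $\eta_2\le(1-\beta)\eta_1$ from \eqref{cond_1}. The delicate part is precisely this boundary bookkeeping at $t=1,2$ together with verifying that the degenerate case $L_t=0$ is consistent with the stated conventions; the remaining parts of \eqref{cond_1}--\eqref{cond_4} are not needed here and serve instead to control $\Delta_t$ in the sequel.
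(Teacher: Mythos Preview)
Your proposal is correct and follows essentially the same approach as the paper. Both arguments combine the three-point inequality for \eqref{prox-mapping} at two consecutive steps (the paper at $t-1$ with $z=z_t$ and at $t$ with general $z$; you at $t$ with $z=z_{t+1}$ and at $t+1$ with general $z$), invoke the convex-combination identity for $y_t$ to telescope in $\tfrac{1}{2\beta}\|y_t-z\|^2$, and use the defining identity $f(x_{t-1})-f(x_t)-\langle g(x_t),x_{t-1}-x_t\rangle=\tfrac{1}{2L_t}\|g(x_t)-g(x_{t-1})\|^2$ to produce the middle term of $\Delta_{t+1}$. The organizational difference is that the paper first derives a per-step inequality (its \eqref{eq6}) and then substitutes the $L_t$-identity, whereas you first rewrite the summand and then assemble; your treatment of the leftover $R$ via $z_t-z_{t+1}=(1-\beta)(z_t-y_{t-1})-(z_{t+1}-y_t)$ plus Young is algebraically equivalent to the paper's use of $\|z_{t-1}-y_{t-1}\|^2+\|z_t-z_{t-1}\|^2\ge\tfrac12\|z_t-y_{t-1}\|^2$, and both lead to the same conditions $\eta_2\le(1-\beta)\eta_1$ and $\eta_{t}\le 2(1-\beta)^2\eta_{t-1}$. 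One small phrasing caveat: writing ``$h'(z_t)=-g(x_{t-1})-\tfrac{1}{\eta_t}(z_t-y_{t-1})$'' ignores the normal-cone component when $X\neq\bbr^n$; the clean way (which yields exactly your displayed bound) is to apply the step-$t$ optimality condition directly with $z=z_{t+1}\in X$.
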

\begin{proof}
First, by the optimality condition of step \eqref{prox-mapping} \revision{}{and the convexity of $h$}, we have for $t \geq 1$,
\begin{align}\label{eq1}
\langle \eta_t g(x_{t-1})+ z_t - y_{t-1}, z - z_t\rangle \geq \revision{0}{\eta_t[h(z_t) - h(z)]}, \quad \forall z \in X,
\end{align}
and consequently for $t \geq 2$,
\begin{align}\label{eq2}
\langle \eta_{t-1} g(x_{t-2})+ z_{t-1} - y_{t-2}, z_t - z_{t-1}\rangle \geq \revision{0}{\eta_{t-1}[h(z_{t-1}) - h(z_t)]}.
\end{align}
By \eqref{prox-center}, we have the relationship $z_{t-1} - y_{t-2} = \tfrac{1}{1-\beta_{t-1}} (z_{t-1}-y_{t-1})$, so we can rewrite \eqref{eq2} as
\begin{align}\label{eq3}
\langle \eta_t g(x_{t-2}) + \tfrac{\eta_t}{\eta_{t-1}(1-\beta_{t-1})}(z_{t-1} - y_{t-1}), z_t - z_{t-1}\rangle \geq \revision{0}{\eta_t[h(z_{t-1}) - h(z_t)]}. 
\end{align}
Summing up \eqref{eq1} and \eqref{eq3} and rearranging the terms give us for $t \geq 2$,
\begin{align*}
&\eta_t \langle g(x_{t-1}), z - z_{t-1}\rangle + \eta_t \langle g(x_{t-1}) - g(x_{t-2}), z_{t-1}-z_t \rangle\\ 
&+ \langle z_t - y_{t-1}, z - z_t\rangle + \tfrac{\eta_t}{\eta_{t-1}(1-\beta_{t-1})} \langle z_{t-1}-y_{t-1}, z_t - z_{t-1} \rangle \geq \revision{0}{\eta_t[h(z_{t-1}) - h(z)]}.
\end{align*}
By utilizing the fact that $2\langle x - y, z - x\rangle = \|y-z\|^2 - \|x-y\|^2 - \|x-z\|^2$ on the last two terms of the above inequality, we obtain
\begin{align}\label{eq4}
&\eta_t \langle g(x_{t-1}), z - z_{t-1}\rangle + \eta_t \langle g(x_{t-1}) - g(x_{t-2}), z_{t-1}-z_t \rangle\nn\\ 
&+ \tfrac{1}{2}\|y_{t-1}-z\|^2 - \tfrac{1}{2}\|z_{t}-y_{t-1}\|^2 -\tfrac{1}{2}\|z_t - z\|^2\nn\\
&+ \tfrac{\eta_t}{2\eta_{t-1}(1-\beta_{t-1})} \left[\|z_t - y _{t-1}\|^2 - \|z_{t-1} - y_{t-1}\|^2 - \|z_t - z_{t-1}\|^2\right] \geq \revision{0}{\eta_t[h(z_{t-1}) - h(z)]},
\end{align}
Meanwhile, \eqref{prox-center} also indicates that
\begin{align}\label{eq5}
    \|z_t - z\|^2 &= \|\tfrac{1}{\beta_t} (y_t - z) - \tfrac{1-\beta_t}{\beta_t}(y_{t-1} - z)\|^2\nn\\
    &\overset{(i)}=\tfrac{1}{\beta_t}\|y_t -z\|^2 - \tfrac{1-\beta_t}{\beta_t}\|y_{t-1} - z\|^2 + \tfrac{1}{\beta_t} \cdot \tfrac{1-\beta_t}{\beta_t} \|y_t - y_{t-1}\|^2\nn\\
    &= \tfrac{1}{\beta_t}\|y_t -z\|^2 - \tfrac{1-\beta_t}{\beta_t}\|y_{t-1} - z\|^2 + (1-\beta_t) \|z_t - y_{t-1}\|^2,
\end{align}
where step (i) follows from the fact that $\|\alpha a + (1-\alpha) b\|^2 = \alpha \|a\|^2 + (1-\alpha)\|b\|^2 - \alpha (1-\alpha)\|a-b\|^2,~\forall \revision{a}{\alpha} \in \bbr$. By combining \revision{Ineqs.}{Ineq.} \eqref{eq4} and \revision{}{Eq.} \eqref{eq5} and rearranging the terms, we obtain
\begin{align}\label{eq6}
&\eta_t \langle g(x_{t-1}), z_{t-1} - z\rangle+ \revision{}{\eta_t[h(z_{t-1}) - h(z)]} + \tfrac{1}{2\beta_t} \|y_t - z\|^2 + \tfrac{\eta_t}{2\eta_{t-1}(1-\beta_{t-1})} \left[\|z_{t-1} - y_{t-1}\|^2 + \|z_t - z_{t-1}\|^2 \right]\nn\\
&\leq \tfrac{1}{2\beta_t}\|y_{t-1} - z\|^2 +  \eta_t \langle g(x_{t-1}) - g(x_{t-2}), z_{t-1} - z_t \rangle- \left[ \tfrac{1}{2} + \tfrac{1 - \beta_t}{2} - \tfrac{\eta_t}{2 \eta_{t-1}(1-\beta_{t-1})} \right]\|z_t - y_{t-1}\|^2.
\end{align}
\revision{}{When $t \geq 3$, by utilizing the fact that $\|z_{t-1} - y_{t-1}\|^2 + \|z_t - z_{t-1}\|^2 \geq \tfrac{1}{2}\|z_t - y_{t-1}\|^2$ and rearranging the terms in Ineq.~\eqref{eq6}, we obtain
\begin{align}\label{eq6_prime}
&\eta_t \langle g(x_{t-1}), z_{t-1} - z\rangle+ \revision{}{\eta_t[h(z_{t-1}) - h(z)]} + \tfrac{1}{2\beta_t} \|y_t - z\|^2 \nn\\
&\leq \tfrac{1}{2\beta_t}\|y_{t-1} - z\|^2 +  \eta_t \langle g(x_{t-1}) - g(x_{t-2}), z_{t-1} - z_t \rangle- \left[ \tfrac{1}{2} + \tfrac{1 - \beta_t}{2} - \tfrac{\eta_t}{4 \eta_{t-1}(1-\beta_{t-1})} \right]\|z_t - y_{t-1}\|^2\nn\\
&\overset{(i)}\leq \tfrac{1}{2\beta_t}\|y_{t-1} - z\|^2 +  \eta_t \langle g(x_{t-1}) - g(x_{t-2}), z_{t-1} - z_t \rangle- \tfrac{1}{2}\|z_t - y_{t-1}\|^2,
\end{align}
where step (i) follows from $\eta_t \leq 2(1-\beta)^2 \eta_{t-1}$ in \eqref{cond_4}. 
On the other hand, recalling the definition of $L_t$ in \eqref{def_L_t} and the convention that $0/0=0$, we can write}
\begin{align}\label{eq7}
    f(x_{t-1}) - f(x_t) - \langle g(x_t), x_{t-1} - x_t\rangle = \tfrac{1}{2L_t}\|g(x_t) - g(x_{t-1})\|^2, ~\forall t \geq 2.
\end{align}
Then for any $z \in X$ and $t \geq 3$, using the above equality and the fact that $x_{t-1} - z = z_{t-1} - z + \tau_{t-1}(x_{t-2} - x_{t-1})$ due to Eq.~\eqref{output_series_2}, we have
\begin{align}\label{eq8}
\tau_{t-1} f(x_{t-1}) + \langle g(x_{t-1}), x_{t-1} - z\rangle
&= \tau_{t-1} [f(x_{t-1}) + \langle g(x_{t-1}), x_{t-2} - x_{t-1}\rangle] + \langle g(x_{t-1}), z_{t-1} - z\rangle\nn\\
& =\tau_{t-1} [f(x_{t-2}) - \tfrac{\|g(x_{t-1}) - g(x_{t-2})\|^2}{2L_{t-1}}] + \langle g(x_{t-1}), z_{t-1} - z\rangle.
\end{align}
Combining \revision{Ineqs.}{Ineq.~\eqref{eq6_prime}} and \revision{}{Eq.} \eqref{eq8} and rearranging the terms, we obtain that for $t \geq 3$
\begin{align}\label{eq9}
&\eta_t\left[\tau_{t-1} f(x_{t-1}) + \langle g(x_{t-1}), x_{t-1} - z\rangle - \tau_{t-1}f(x_{t-2}) + \revision{}{h(z_{t-1}) - h(z)}\right]  \nn\\
&\leq \tfrac{1}{2\beta_t}\|y_{t-1} - z\|^2 - \tfrac{1}{2\beta_t}\|y_{t}- z\|^2 + \eta_t \langle g(x_{t-1}) - g(x_{t-2}), z_{t-1}- z_t \rangle\nn\\
&\quad  - \tfrac{\eta_t \tau_{t-1}}{2L_{t-1}}\|g(x_{t-1}) - g(x_{t-2})\|^2 - \tfrac{\revision{\beta_t}{1}}{2} \|z_t - y_{t-1}\|^2.
\end{align}
Meanwhile, notice that $\beta_1 = 0$ and $x_1 = z_1$ due to $\tau_1=0$, we can rewrite \eqref{eq6} for the case $t=2$ as
\begin{align}\label{eq9_2}
&\eta_2 [\langle g(x_{1}), x_1 - z\rangle + \revision{}{h(z_{1}) - h(z)} ]    + \tfrac{\eta_2}{2\eta_{1}} \left[\|z_1 - y_1\|^2 + \|z_2 - z_1\|^2 \right]\nn\\
&\leq \tfrac{1}{2\beta_2}\|y_{1} - z\|^2 - \tfrac{1}{2\beta_2} \|y_2 - z\|^2+  \eta_2 \langle g(x_{1}) - g(x_{0}), z_{1} - z_2 \rangle \revision{- \tfrac{\beta_2}{2}\|z_2 - y_{1}\|^2.}{- \left[ \tfrac{1}{2} + \tfrac{1 - \beta_2}{2} - \tfrac{\eta_2}{2 \eta_1} \right]\|z_2 - y_{1}\|^2}\nn\\
&\overset{(ii)}\leq \revision{- \tfrac{\beta_2}{2}\|z_2 - y_{1}\|^2.}{\tfrac{1}{2\beta_2}\|y_{1} - z\|^2 - \tfrac{1}{2\beta_2} \|y_2 - z\|^2+  \eta_2 \langle g(x_{1}) - g(x_{0}), z_{1} - z_2 \rangle -  \tfrac{1}{2}\|z_2 - y_{1}\|^2},
\end{align}
\revision{}{where step (ii) follows from the condition $\eta_2 \leq (1-\beta)\eta_1$ in \eqref{cond_1}.}
By taking the summation of Ineq.~\eqref{eq9_2} and the telescope sum of Ineq.~\eqref{eq9} for $t=3,..., k+1$, and noting that $\tau_1=0$ and $\beta_t = \beta$ for $t \geq 2$, we obtain
\begin{align*}
&\tsum_{t=1}^k \eta_{t+1}\left[\tau_t f(x_{t})+ \langle g(x_t), x_t - z\rangle - \tau_t f(x_{t-1}) \revision{}{+ h(z_{t}) - h(z)}\right] + \tfrac{\eta_2}{2\eta_{1}} \left[\|z_1 - y_1\|^2 + \|z_2 - z_1\|^2 \right]\nn\\
& \leq \tfrac{1}{2\beta}\|y_1 - z\|^2 - \tfrac{1}{2\beta}\|y_{k+1}-z\|^2 + \tsum_{t=2}^{k+1}\Delta_{t},
\end{align*}
where $\Delta_t$ is defined in the statement of this proposition, and we complete the proof.
\end{proof}
\vgap

Utilizing Proposition~\ref{prop_1}, we further bound the extra terms $\tsum_{t=2}^{k+1}\Delta_{t}$ and provide the convergence guarantees for AC-FGM in terms of the objective values in the following theorem.
\begin{theorem}\label{main_theorem}
 Assume the parameters $\{\tau_t\}$, $\{\eta_t\}$ and $\{\beta_t\}$ satisfy conditions~\eqref{cond_3}, \eqref{cond_1} and \eqref{cond_4}. Then the sequences $\{z_t\}, \{y_t\}$ and $\{x_t\}$ generated by Algorithm~\ref{alg1} are bounded. Moreover, we have
\begin{align}\label{main_thm_eq1}
\revision{f}{\Psi}(x_{k}) - \revision{f}{\Psi}(x^*) \leq \tfrac{1}{\revision{\tau_k \eta_{k+1}}{(\tau_k+1) \eta_{k+1}}}\cdot \left[\tfrac{1}{2\beta} \|z_0 - x^*\|^2 + \left(\revision{\tfrac{3\eta_2 L_1}{2}}{\tfrac{5\eta_2 L_1}{4}} - \tfrac{\eta_2}{2\eta_1} \right)\|z_1 - z_0\|^2 \right],
\end{align}
and
\begin{align}\label{main_thm_eq2}
\revision{f}{\Psi}(\bar x_{k}) - \revision{f}{\Psi}(x^*) \leq \tfrac{1}{\sum_{t=2}^{k+1}\eta_t }\cdot \left[\tfrac{1}{2\beta} \|z_0 - x^*\|^2 + \left(\revision{\tfrac{3\eta_2 L_1}{2}}{\tfrac{5\eta_2 L_1}{4}} - \tfrac{\eta_2}{2\eta_1} \right)\|z_1 - z_0\|^2 \right],
\end{align}
where 
\begin{align}\label{def_bar_x_k}
\bar x_{k} := 
 \tfrac{\sum_{t=1}^{k-1}[(\tau_{t}+1)\eta_{t+1} - \tau_{t+1} \eta_{t+2}]x_{t}+ (\tau_{k}+1)\eta_{k+1} x_{k} }{\sum_{t=2}^{k+1}\eta_t }.
 \end{align}

\end{theorem}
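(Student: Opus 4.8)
The plan is to start from the recursion \eqref{eq9_3} of Proposition~\ref{prop_1}, specialize $z = x^*$, convert the linear/gradient terms on the left-hand side into values of $\Psi$ via convexity, and separately control the error terms $\tsum_{t=2}^{k+1}\Delta_t$ on the right. For the left-hand side I would use convexity of $f$ to replace $\langle g(x_t), x_t - z\rangle$ by the lower bound $f(x_t) - f(z)$, and convexity of $h$ together with $z_t = (1+\tau_t)x_t - \tau_t x_{t-1}$ from \eqref{output_series_2} to get $h(z_t) \ge (1+\tau_t) h(x_t) - \tau_t h(x_{t-1})$. Each summand is then bounded below by $\eta_{t+1}[(1+\tau_t)\Psi(x_t) - \tau_t \Psi(x_{t-1}) - \Psi^*]$. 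Reindexing this telescoping-type sum, the coefficient of $\Psi(x_t)$ for $1 \le t \le k-1$ is $\eta_{t+1}(1+\tau_t) - \eta_{t+2}\tau_{t+1}$, which is nonnegative precisely because of the third constraint $\eta_t \le \tfrac{\tau_{t-2}+1}{\tau_{t-1}}\eta_{t-1}$ in \eqref{cond_4} (after the shift $t \mapsto t+2$), and the total weight equals $\tsum_{t=2}^{k+1}\eta_t$.

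Here the two estimates \eqref{main_thm_eq1} and \eqref{main_thm_eq2} diverge. For \eqref{main_thm_eq1} I would lower-bound each $\Psi(x_t) \ge \Psi^*$ for $t<k$ and isolate the single term $\eta_{k+1}(\tau_k+1)[\Psi(x_k) - \Psi^*]$. For \eqref{main_thm_eq2} I would instead apply Jensen's inequality, recognizing $\bar x_k$ in \eqref{def_bar_x_k} as exactly the convex combination of the $x_t$ with the normalized nonnegative weights above, so that the sum dominates $\big(\tsum_{t=2}^{k+1}\eta_t\big)[\Psi(\bar x_k)-\Psi^*]$.

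The crux is bounding $\tsum_{t=2}^{k+1}\Delta_t$. For $t\ge 3$, I would apply Young's inequality to the cross term $\eta_t \langle g(x_{t-1})-g(x_{t-2}), z_{t-1}-z_t\rangle$ to cancel exactly the $-\tfrac{\eta_t\tau_{t-1}}{2L_{t-1}}\|g(x_{t-1})-g(x_{t-2})\|^2$ term, leaving $\tfrac{\eta_t L_{t-1}}{2\tau_{t-1}}\|z_t-z_{t-1}\|^2$, which the constraint $\eta_t \le \tfrac{\tau_{t-1}}{4L_{t-1}}$ in \eqref{cond_4} reduces to $\tfrac18\|z_t-z_{t-1}\|^2$. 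Then, using $\|z_t-z_{t-1}\|^2 \le 2\|z_t-y_{t-1}\|^2 + 2\|z_{t-1}-y_{t-1}\|^2$ together with the identity $\|z_{t-1}-y_{t-1}\|^2 = (1-\beta)^2\|z_{t-1}-y_{t-2}\|^2$ coming from \eqref{prox-center}, I expect the telescoping bound $\Delta_t \le -\tfrac14\|z_t-y_{t-1}\|^2 + \tfrac{(1-\beta)^2}{4}\|z_{t-1}-y_{t-2}\|^2$. Since $(1-\beta)^2<1$, summing over $t\ge3$ collapses to at most $\tfrac{(1-\beta)^2}{4}\|z_2-y_1\|^2$. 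The base term $\Delta_2$, where $\tau_1=0$ blocks the cancellation, must be treated directly: using $\|g(x_1)-g(x_0)\| = L_1\|z_1-z_0\|$, the identity $y_1 = z_0$, and $\eta_2 L_1 \le \tfrac14$ from \eqref{cond_1}, I would show $\Delta_2 + \tfrac{(1-\beta)^2}{4}\|z_2-y_1\|^2 \le \tfrac{5\eta_2 L_1}{4}\|z_1-z_0\|^2$, whence $\tsum_{t=2}^{k+1}\Delta_t \le \tfrac{5\eta_2 L_1}{4}\|z_1-z_0\|^2$. I expect this base step to be the most delicate part, since obtaining the exact constant $\tfrac54$ requires splitting the $t=2$ cross term carefully and keeping $\eta_2 L_1$ symbolic rather than absorbing it into a fixed fraction.

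Finally, I would assemble the two sides of \eqref{eq9_3}, move the term $\tfrac{\eta_2}{2\eta_1}\|z_1-z_0\|^2 = \tfrac{\eta_2}{2\eta_1}\|z_1-y_1\|^2$ from the left to the right (so it combines with $\tfrac{5\eta_2 L_1}{4}$), and discard the nonnegative remainders $\tfrac{\eta_2}{2\eta_1}\|z_2-z_1\|^2$ and $\tfrac{1}{2\beta}\|y_{k+1}-x^*\|^2$; dividing by $(\tau_k+1)\eta_{k+1}$ (respectively $\tsum_{t=2}^{k+1}\eta_t$) then yields \eqref{main_thm_eq1} (respectively \eqref{main_thm_eq2}). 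For the boundedness claim I would instead retain $-\tfrac{1}{2\beta}\|y_{k+1}-x^*\|^2$: since $\Psi(x_k)-\Psi^*\ge0$, it follows that $\tfrac{1}{2\beta}\|y_{k+1}-x^*\|^2$ is bounded by the same $k$-independent constant, so $\{y_t\}$ is bounded; then $z_t = \tfrac1\beta\big(y_t - (1-\beta)y_{t-1}\big)$ from \eqref{prox-center} shows $\{z_t\}$ is bounded, and since each $x_t$ lies in the convex hull of $\{z_1,\dots,z_t\}$ by \eqref{output_series_2}, $\{x_t\}$ is bounded as well.
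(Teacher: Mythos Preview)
Your proposal is correct and follows essentially the same route as the paper's proof: the same Young-inequality cancellation for $\Delta_t$ with $t\ge 3$, the same triangle/Young splitting for $\Delta_2$ to extract the constant $\tfrac{5\eta_2 L_1}{4}$, and the same convexity rewriting of the left-hand side of \eqref{eq9_3} into the telescoping weights $(\tau_t+1)\eta_{t+1}-\tau_{t+1}\eta_{t+2}$. The only cosmetic differences are that you retain the factor $(1-\beta)^2$ in the $\Delta_t$ telescoping (the paper relaxes it to $1$), and for boundedness you argue via $\{y_t\}$ alone and then recover $z_t=\tfrac{1}{\beta}(y_t-(1-\beta)y_{t-1})$, whereas the paper keeps the extra $-\tfrac{1}{4}\|z_{k+1}-y_k\|^2$ term and combines it with $\tfrac{1}{2\beta}\|y_{k+1}-x^*\|^2$ through the identity \eqref{eq5} to bound $\|z_{k+1}-x^*\|^2$ directly.
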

\begin{proof}
First, notice that Ineq.~\eqref{eq9_3} in Proposition~\ref{prop_1} holds. Now we work on the upper bounds of $\Delta_t$. 
For $t \geq 3$, we can bound $\Delta_t$ as
\begin{align}\label{eq10}
\Delta_t 
 &\overset{(i)}\leq  \eta_t \|g(x_{t-1}) - g(x_{t-2})\|\|z_{t-1} - z_t\| - \tfrac{\eta_t \tau_{t-1}\|g(x_{t-1}) - g(x_{t-2})\|^2}{2L_{t-1}} - \tfrac{\revision{\beta}{1}}{2} \|z_t - y_{t-1}\|^2 \nn\\
 &\overset{(ii)}\leq  \tfrac{\eta_t L_{t-1}}{2 \tau_{t-1}} \|z_{t-1} - z_t\|^2 - \tfrac{\revision{\beta}{1}}{2} \|z_t - y_{t-1}\|^2\nn\\
 &\overset{(iii)}\leq  \tfrac{\eta_t L_{t-1}(1-\beta)^2}{ \tau_{t-1}} \|z_{t-1} - y_{t-2}\|^2 - \left(\tfrac{\revision{\beta}{1}}{2} - \tfrac{\eta_t L_{t-1}}{\tau_{t-1}} \right) \|z_t - y_{t-1}\|^2\nn\\
 &\overset{(iv)}\leq  \tfrac{\revision{\beta}{1}}{4} \|z_{t-1} - y_{t-2}\|^2 - \tfrac{\revision{\beta}{1}}{4}  \|z_t - y_{t-1}\|^2,
\end{align}
Here, step (i) follows from Cauchy-Schwarz inequality, step (ii) follows from Young's inequality, step (iii) follows from the fact that 
\begin{align*}
\|z_{t-1}- z_t\|^2 &= \|z_{t-1} - y_{t-1}+ y_{t-1} - z_t\|^2 = \|(1-\beta)(z_{t-1} - y_{t-2}) + y_{t-1} - z_t\|^2\\
&\leq 2(1-\beta)^2\|z_{t-1} - y_{t-2}\|^2 + 2\|z_t - y_{t-1}\|^2,
\end{align*}
and step (iv) follows from condition $\eta_t \leq \tfrac{\revision{\beta}{}\tau_{t-1}}{4L_{t-1}}$ in \eqref{cond_4}.
Meanwhile, we can bound $\Delta_2$ by
\begin{align}\label{eq10_2}
\Delta_2 
& \overset{(i)}\leq \eta_2 L_1 \|x_1 - x_0\|\|z_1 - z_2\| - \tfrac{\revision{\beta}{1}}{2}\|z_2 - y_1\|^2\nn\\
&\overset{(ii)}\leq \eta_2 L_1 \|z_1 - y_0\| \left( \|z_1 - y_0\|+ \|z_2 - y_0\| \right) - \tfrac{\revision{\beta}{1}}{2}\|z_2 - y_0\|^2\nn\\
&\overset{(iii)}\leq \revision{\tfrac{3\eta_2 L_1}{2}}{\tfrac{5\eta_2 L_1}{4}}\|z_1 - z_0\|^2 +\left(\revision{\tfrac{\eta_2 L_1}{2}}{\eta_2 L_1} - \tfrac{\revision{\beta}{1}}{2}\right)\| z_2 - y_0\|^2\nn\\
&\overset{(iv)}\leq \revision{\tfrac{3\eta_2 L_1}{2}}{\tfrac{5\eta_2 L_1}{4}}\|z_1 - z_0\|^2 - \tfrac{\revision{\beta}{1}}{4}\|z_2 - y_1\|^2,
\end{align}
where step (i) follows from Cauchy-Schwarz inequality and the definition of $L_1$; step (ii) follows from triangle inequality and $x_1 = z_1$, $x_0 = y_0 = y_1=z_0$ due to $\tau_1 = 0$, $\beta_1 = 0$; step (iii) follows from Young's inequality; step (iv) follows from the condition $\eta_2 \leq \revision{\tfrac{\beta}{2L_1}}{\tfrac{1}{4L_1}}$. 

By substituting the bounds of \eqref{eq10} and \eqref{eq10_2} in Ineq.~\eqref{eq9_3}, we obtain
\begin{align}\label{eq11}
&\tsum_{t=1}^k \eta_{t+1}\left[\tau_t f(x_{t})+ \langle g(x_t), x_t - z\rangle - \tau_t f(x_{t-1}) \revision{}{+ h(z_{t}) - h(z)}\right] \nn\\
& \leq \tfrac{1}{2\beta}\|y_1 - z\|^2 - \tfrac{1}{2\beta}\|y_{k+1}-z\|^2 + \left(\revision{\tfrac{3\eta_2 L_1}{2}}{\tfrac{5\eta_2 L_1}{4}} - \tfrac{\eta_2}{2\eta_1}\right)\|z_1-z_0\|^2 - \tfrac{\revision{\beta}{1}}{4}\|z_{k+1}-y_k\|^2,
\end{align}
Set $z=x^*$ in \eqref{eq11}. Then, \revision{relationship}{by utilizing the relationships} $\langle g(x_t), x_t - x^*\rangle \geq f(x_t) - f(x^*)$ due to the convexity of $f$, \revision{}{and 
\begin{align}\label{label_convex_h}
\eta_{t+1} (\tau_{t}+1) [h(x_{t}) - h(x^*)] \leq \eta_{t+1}\tau_t [h(x_{t-1})-h(x^*)] + \eta_{t+1}[h(z_{t})-h(x^*)]
\end{align}
due to the convexity of $h$ and step \eqref{output_series_2}, and recalling the definition $\Psi(x)=f(x)+h(x)$},
we obtain
\begin{align}\label{eq12}
&\tsum_{t=1}^{k-1}[(\tau_{t}+1)\eta_{t+1} - \tau_{t+1} \eta_{t+2}][\revision{f}{\Psi}(x_{t})-\revision{f}{\Psi}(x^*)] + (\tau_{k}+1)\eta_{k+1} [\revision{f}{\Psi}(x_{k})-\revision{f}{\Psi}(x^*)]\nn \\
&\leq \tau_1 \eta_2 [\revision{f}{\Psi}(x_0) - \revision{f}{\Psi}(x^*)] + \tfrac{1}{2\beta} \|y_1 - x^*\|^2 +\left(\revision{\tfrac{3\eta_2 L_1}{2}}{\tfrac{5\eta_2 L_1}{4}} - \tfrac{\eta_2}{2\eta_1}\right)\|z_1-z_0\|^2 - \tfrac{1}{2\beta}\|y_{k+1}- x^*\|^2 - \tfrac{\revision{\beta}{1}}{4}\|z_{k+1}-y_k\|^2\nn\\
& \overset{(i)}= \tfrac{1}{2\beta} \|z_0 - x^*\|^2 + \left(\revision{\tfrac{3\eta_2 L_1}{2}}{\tfrac{5\eta_2 L_1}{4}} - \tfrac{\eta_2}{2\eta_1} \right)\|z_1 - z_0\|^2 - \tfrac{1}{2\beta}\|y_{k+1}- x^*\|^2 - \tfrac{\revision{\beta}{1}}{4}\|z_{k+1}-y_k\|^2,
\end{align}
where step (i) follows from the condition that $\tau_1 = 0$ \revision{}{and $y_1 = z_0$ due to $\beta_1 = 0$}.
Then by invoking the condition $\eta_t \leq \frac{\tau_{t-2}+1}{\tau_{t-1}}\cdot\eta_{t-1}$ in \eqref{cond_4}, we obtain \eqref{main_thm_eq1}.  By further invoking the definition of $\bar x_k$, we arrive at
\begin{align*}
    \revision{f}{\Psi}(\bar x_{k}) - \revision{f}{\Psi}(x^*) \leq  \tfrac{1}{\sum_{t=2}^{k+1}\eta_t }\cdot \left[\tfrac{1}{2\beta} \|z_0 - x^*\|^2 + \left(\revision{\tfrac{3\eta_2 L_1}{2}}{\tfrac{5\eta_2 L_1}{4}} - \tfrac{\eta_2}{2\eta_1} \right)\|z_1 - z_0\|^2 \right],
\end{align*}
which completes the proof of \eqref{main_thm_eq2}. Moreover, recalling  Ineq.~\eqref{eq5},
we have
\begin{align*}
\tfrac{1}{2\beta}\|y_{k+1}- x^*\|^2 + \tfrac{\revision{\beta}{1}}{4}\|z_{k+1}-y_k\|^2 &\geq \min\big\{\tfrac{\revision{\beta}{1}}{4(1-\beta)}, \tfrac{1}{2}\big\}\cdot \big[\tfrac{1}{\beta}\|y_{k+1}-x^*\|^2 + (1-\beta)\|z_{k+1}-y_k\|^2 \big]\\
&= \min\big\{\tfrac{\revision{\beta}{1}}{4(1-\beta)}, \tfrac{1}{2}\big\}\cdot \left[\|z_{k+1} - x^*\|^2 + \tfrac{1-\beta}{\beta}\|y_k - x^*\|^2\right],
\end{align*}
which, together with Ineq.~\eqref{eq12}, indicates that 
$$\|z_{k+1} - x^*\|^2 \leq \left(\min\big\{\tfrac{\revision{\beta}{1}}{4(1-\beta)}, \tfrac{1}{2}\big\}\right)^{-1}\cdot \left[\tfrac{1}{2\beta} \|z_0 - x^*\|^2 + \left(\revision{\tfrac{3\eta_2 L_1}{2}}{\tfrac{5\eta_2 L_1}{4}} - \tfrac{\eta_2}{2\eta_1} \right)\|z_1 - z_0\|^2\right].$$
Therefore, the sequence $\{z_t\}$ and its weighted average sequences $\{x_t\}$ and $\{y_t\}$ are bounded, and we complete the entire proof.
\end{proof}
\vgap

Clearly, the conditions \eqref{cond_1} and \eqref{cond_4} indicate that it is possible to determine the stepsize $\eta_t$ without the line search procedure. 
Specifically, we propose the following adaptive stepsize policy with optimal convergence guarantees.
\begin{corollary} \label{main_corollary_1}
In the premise of Theorem~\ref{main_theorem}, suppose $\tau_1=0, ~\tau_t =\tfrac{t}{2}$, for $t \geq 2$ and $\beta \in (0,  1 -\revision{\tfrac{\sqrt{3}}{2}}{\tfrac{\sqrt{6}}{3}}]$, and the stepsize $\eta_t$ follows the rule:
\begin{align*}
\eta_2 &= \min \{\revision{2(1 - \beta) \eta_1}{(1 - \beta) \eta_1}, \revision{\tfrac{\beta}{2L_1}}{\tfrac{1}{4L_1}} \},\\
\eta_3 &= \min \{\eta_2, \tfrac{\revision{\beta}{1}}{4 L_2} \},\\
\eta_t &= \min\{\tfrac{t}{t-1} \eta_{t-1}, \tfrac{\revision{\beta (t-1)}{t-1}}{8 L_{t-1}} \}, ~t \geq 4.
\end{align*}
Then we have for $t \geq 2$,
\begin{align}\label{eta_lower_bound}
\eta_t \geq \tfrac{t \revision{\beta}{}}{12 \hat L_{t-1}},~~ \text{where} ~~\hat L_{t} := \max\{\tfrac{\revision{\beta}{1}}{4(1-\beta)\eta_1}, L_1,...,L_t\}.
\end{align}
Consequently, we have
\begin{align*}
\revision{f}{\Psi}(x_{k}) - \revision{f}{\Psi}(x^*) \leq \tfrac{12 \hat L_{k} }{k (k+1)\revision{\beta}{}}\cdot \left[\tfrac{1}{\beta} \|z_0 - x^*\|^2 + \eta_2\left( \revision{3L_1}{\tfrac{5L_1}{2}} - \tfrac{1}{\eta_1} \right)\|z_1 - z_0\|^2 \right],
\end{align*}
and
\begin{align*}
\revision{f}{\Psi}(\bar x_{k}) - \revision{f}{\Psi}(x^*) \leq \tfrac{1}{\sum_{t=2}^{k+1}\frac{t\revision{\beta}{}}{6 \hat L_{t-1}}}\cdot \left[\tfrac{1}{\beta} \|z_0 - x^*\|^2 + \eta_2\left(\revision{3L_1}{\tfrac{5L_1}{2}} - \tfrac{1}{\eta_1} \right)\|z_1 - z_0\|^2 \right].
\end{align*}
\end{corollary}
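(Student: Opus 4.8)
The plan is to derive this corollary from Theorem~\ref{main_theorem} in three stages: first confirm that the proposed stepsize rule is \emph{admissible} (i.e., satisfies \eqref{cond_3}, \eqref{cond_1} and \eqref{cond_4}), then establish the explicit lower bound \eqref{eta_lower_bound} on $\eta_t$, and finally substitute this bound into \eqref{main_thm_eq1} and \eqref{main_thm_eq2}.

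For admissibility I would substitute $\tau_1=0$ and $\tau_t=t/2$ into the three terms of \eqref{cond_4}. A direct computation gives $\tau_{t-1}/(4L_{t-1}) = (t-1)/(8L_{t-1})$ and $(\tau_{t-2}+1)/\tau_{t-1} = t/(t-1)$ for $t\geq 4$ (and $=1$ for $t=3$), which match exactly the two quantities in the proposed rule for $\eta_t$. It then remains to check that the first term $2(1-\beta)^2\eta_{t-1}$ is redundant, i.e., dominates the growth term; this reduces to $2(1-\beta)^2 \geq t/(t-1)$, whose worst case over $t\geq 4$ is $t=4$, requiring $2(1-\beta)^2 \geq 4/3$, equivalently $1-\beta \geq \sqrt{6}/3$, while the $t=3$ case needs only $2(1-\beta)^2 \geq 1$. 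This is precisely why the hypothesis restricts $\beta \in (0, 1-\sqrt6/3]$; condition \eqref{cond_1} holds by the very definition of $\eta_2$.

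For the lower bound I would argue by induction on $t$. The base case rests on the observation that, via $1/\max\{a,b\} = \min\{1/a,1/b\}$, the definition of $\eta_2$ together with $\hat L_1 = \max\{1/(4(1-\beta)\eta_1), L_1\}$ collapses to the exact identity $\eta_2 = 1/(4\hat L_1) = 3/(12\hat L_1)$, which already exceeds $2/(12\hat L_1)$; the case $t=3$ follows similarly from $\eta_3 = \min\{\eta_2, 1/(4L_2)\}$ and the monotonicity $\hat L_2 \geq \hat L_1$. For the inductive step $t\geq 4$ I split on which term attains the minimum in $\eta_t = \min\{\tfrac{t}{t-1}\eta_{t-1}, \tfrac{t-1}{8L_{t-1}}\}$: in the first case the growth factor $t/(t-1)$ exactly cancels the $(t-1)$ in the hypothesis $\eta_{t-1}\geq (t-1)/(12\hat L_{t-2})$ and monotonicity of $\hat L$ finishes it; in the second case $L_{t-1}\leq \hat L_{t-1}$ reduces the claim to the numerical inequality $(t-1)/8 \geq t/12$, valid for $t\geq 3$. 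The interplay between the constant $8$ in the cap, the constant $12$ in the target, and the multiplier $t/(t-1)$ is the crux, and this is the step I expect to require the most care.

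Finally, with \eqref{eta_lower_bound} in hand I would plug into Theorem~\ref{main_theorem}. For \eqref{main_thm_eq1}, using $\tau_k+1 = (k+2)/2 \geq k/2$ and $\eta_{k+1}\geq (k+1)/(12\hat L_k)$ yields $(\tau_k+1)\eta_{k+1} \geq k(k+1)/(24\hat L_k)$; since the bracketed quantity is nonnegative (forced by $\Psi(x_k)-\Psi(x^*)\geq 0$ and the positivity of the prefactor), inverting and pulling out a factor $\tfrac12$ gives the stated $x_k$ bound $\tfrac{12\hat L_k}{k(k+1)}[\cdots]$. For the ergodic bound \eqref{main_thm_eq2}, summing the per-iterate estimate gives $\sum_{t=2}^{k+1}\eta_t \geq \sum_{t=2}^{k+1} t/(12\hat L_{t-1})$, and reorganizing the constant turns $1/\sum_{t=2}^{k+1}\eta_t$ into the claimed $1/\sum_{t=2}^{k+1}\tfrac{t}{6\hat L_{t-1}}$ against the rescaled bracket. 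Both substitutions are routine once the lower bound is in place.
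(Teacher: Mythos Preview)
Your proposal is correct and follows essentially the same route as the paper: verify that the specific choices of $\tau_t$ and $\eta_t$ satisfy \eqref{cond_3}--\eqref{cond_4} (in particular that the constraint $\beta\le 1-\sqrt{6}/3$ is exactly what makes $2(1-\beta)^2\ge t/(t-1)$ redundant), prove the lower bound $\eta_t\ge t/(12\hat L_{t-1})$ by induction with the same base cases $t=2,3$ and the same numerical comparison $(t-1)/8\ge t/12$ in the step, and then substitute into \eqref{main_thm_eq1}--\eqref{main_thm_eq2}. The paper compresses your case-split in the inductive step into a single chain $\min\{\tfrac{t+1}{12\hat L_{t-1}},\tfrac{t}{8L_t}\}\ge\min\{\tfrac{t+1}{12},\tfrac{t}{8}\}\cdot\tfrac{1}{\hat L_t}$ and is terser on the final substitution, but the argument is the same.
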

\begin{proof}
First of all, from the definition of $\tau_t$ and $\beta$, it is easy to see that the stepsize rule satisfies conditions \eqref{cond_3}-\eqref{cond_4}. Then, we prove Ineq.~\eqref{eta_lower_bound} using inductive arguments. For $t=2$, invoking the definition $\hat L_1 := \max\{\tfrac{\revision{\beta}{1}}{4(1-\beta)\eta_1}, L_1\}$, we have $\eta_2 = \revision{\tfrac{\beta}{2 \hat L_1}}{\tfrac{1}{4 \hat L_1}}$. Next for $t=3$, we have 
\begin{align*}
\eta_3 = \min\{\eta_2, \tfrac{\revision{\beta}{1}}{4 L_2}\} =\min\{\tfrac{\revision{\beta}{1}}{4\hat L_1}, \tfrac{\revision{\beta}{1}}{4 L_2}\} \geq \tfrac{3}{12 \hat L_2},
\end{align*}
which satisfies \eqref{eta_lower_bound}. Now we assume that for a $t \geq 3$, $\eta_t \geq \tfrac{t }{12\hat L_{t -1}}$, then
\begin{align*}
\eta_{t  + 1} = \min\{\tfrac{t +1}{t }\eta_{t }, \tfrac{ t }{8 L_{t }}\} \geq \min\{\tfrac{t +1}{12 \hat L_{t -1}}, \tfrac{ t }{8 L_{t }}\} \geq \min\{\tfrac{t +1}{12}, \tfrac{t }{8}\}\cdot \tfrac{\revision{\beta}{1}}{ \hat L_t } = \tfrac{t+1}{12 \hat L_t },
\end{align*}
where the last equality follows from $t \geq 3$. Therefore, we proved Ineq.~\eqref{eta_lower_bound}. By utilizing it in Ineqs.\eqref{main_thm_eq1} and \eqref{main_thm_eq2}, we obtain the desired convergence guarantees.
\end{proof}
\vgap


The bounds in Corollary~\ref{main_corollary_1} merit some comments. First, we notice that this stepsize rule is fully problem-parameter-free, with no line search required. Specifically, the stepsize $\eta_t$ is chosen based on the previous stepsize $\eta_{t-1}$ and the local Lipschitz constant $L_{t-1}$ from the previous iteration. The initial stepsize of the first iteration, $\eta_1$, can be chosen arbitrarily, but this choice will influence the subsequent stepsize selections. 
\revision{In practice}{A} desirable choice of $\eta_1$ should be neither too large nor too small. A too-large $\eta_1$  will introduce an extra term, $\eta_2(\revision{3L_1}{\tfrac{5L_1}{2}} - \tfrac{1}{\eta_1})\|z_1 - z_0\|^2$, on the right-hand side of the bounds in Corollary~\ref{main_corollary_1}. However, an excessively small $\eta_1$ can make $\tfrac{\revision{\beta}{1}}{4(1-\beta)\eta_1}$ dominate $\hat L_t$. 
\revision{}{In practice, a simple strategy is to find an $z_{-1} \in X$ (e.g., a perturbation of $z_0$) and compute 
\begin{align}\label{def_L_0}
\eta_1 := \tfrac{\zeta}{4(1-\beta)L_0}, \text{ where } L_0 := \tfrac{\|g(z_{-1}) - g(z_0)\|}{\|z_{-1}-z_0\|} \leq L,~\text{for some} ~\zeta > 0.
\end{align}
Then we have $\hat L_t \leq \mathcal{O}(L)$ for all $t \in \mathbb{Z}_+$. If the condition $L_1 \leq \tfrac{8(1-\beta)}{5\zeta}L_0$ is satisfied, both $x_k$ and $\bar x_k$ achieve the optimal convergence rate
\begin{align*}
\mathcal{O}\left(\tfrac{L}{k^2} \|z_0-x^*\|^2\right).
\end{align*}
In the case that $L_1 > \tfrac{8(1-\beta)}{5\zeta}L_0$, the convergence rate will be bounded by
\begin{align*}
\mathcal{O}\left(\tfrac{L}{k^2} [\|z_0-x^*\|^2+\|z_0 - z_1\|^2]\right),
\end{align*}
depending on both iterates $z_1$ and $z_0$. 

Another strategy that can fully get rid of the term $\|z_0 -z_1\|^2$ is to incorporate a simple line search procedure only in the first iteration:
\begin{itemize}
    \item[a)] Choose $L_0 \leq L$, e.g., as in \eqref{def_L_0}. Choose $\gamma \in (1, +\infty)$.
    \item[b)] Find the smallest $i \geq 0$ such that for $\eta_1^{(i)} = \tfrac{1}{4(1-\beta)L_0 \gamma^i}$ and 
    \begin{align*}
    z_1^{(i)} &= \arg \min_{z \in X} \left\{\revision{\eta_1^{(i)} \langle g(z_{0}), z \rangle}{\eta_1^{(i)}[\langle g(z_{0}), z \rangle + h(z)]} + \tfrac{1}{2}\|z_{0} - z\|^2\right\}
    \end{align*}
    we have
    \begin{align*}
    \eta_1^{(i)} \leq \tfrac{2}{5 L_1^{(i)}}, \text{ where } L_1^{(i)} := \tfrac{\|g(z_{1}^i) - g(z_0)\|}{\|z_{1}^i-z_0\|}.
    \end{align*}
    \item[c)] Set $\eta_1 = \eta_1^{(i)}$, $x_1 = z_1 = z_1^{(i)}$, $L_1 = L^{(i)}_1$.
\end{itemize}
\vspace{0.1in}

As a result, when the above line search procedure terminates for some  $i \geq 1$, we have
\begin{align*}
   \tfrac{2}{5 \gamma L} \leq \tfrac{2}{5 \gamma L_1^{(i-1)}} < \eta_1 \leq \tfrac{2}{5 L_1},
\end{align*}
indicating that $\hat L_t := \max\{\tfrac{1}{4(1-\beta)\eta_1}, L_1, ..., L_t\} \leq \tfrac{5 \gamma L}{8(1-\beta)}$, and 
\begin{align*}
\revision{f}{\Psi}(x_k) - \revision{f}{\Psi}(x^*) \leq \tfrac{15 \gamma L}{2k(k+1)\beta(1-\beta)} \|z_0-x^*\|^2, ~~\text{and}~~ \revision{f}{\Psi}(\bar x_k) - \revision{f}{\Psi}(x^*) \leq \tfrac{15\gamma L}{2k(k+3)\beta(1-\beta)} \|z_0-x^*\|^2.
\end{align*}
Therefore, in order to compute an $\epsilon$-optimal solution $\bar x$ satisfying $\revision{f}{\Psi}(\bar x) - \revision{f}{\Psi}(x^*) \leq \epsilon$, we need  at most 
\begin{align*}
\mathcal{O}\left(\sqrt{\tfrac{L \|z_0 - x^*\|^2}{\epsilon}}  + \log_\gamma (\tfrac{L}{L_0})\right)
\end{align*}
calls to the first-order oracle, which matches the lower bound in general complexity theorem \cite{nemirovski1983problem} up to only an additive factor of the initial line search steps. 
}

\paragraph{Remark.}Even when $f$ is only \emph{locally smooth}, there exists an upper bound $\bar L$ for $\hat L_t$ since all search points $\{x_t\}$ are bounded (as proved in Theorem~\ref{main_theorem}), thus we can obtain similar convergence guarantees with $L$ replaced by $\bar L$.

Although the worst-case guarantee is theoretically sound, the stepsize policy in Corollary~\ref{main_corollary_1} can sometimes be conservative in practice. In particular, if the initial estimation of the Lipschitz constant $L_1$ happens to be close to $L$, the subsequent stepsizes $\eta_t$ will be chosen as $\eta_t \sim \mathcal{O}(t/L)$ due to the update rule $\eta_t \leq \tfrac{t}{t-1}\eta_{t-1}$. Consequently, regardless of how small the local Lipschitz constant $L_t$ is, the stepsizes will not be much increased to reflect the local curvature. Therefore, to further improve the adaptivity of AC-FGM, we propose the following novel stepsize rule, which makes the selection of $\tau_t$ more adaptive to allow larger stepsizes $\eta_t$.

\begin{corollary}\label{main_corollary_2}
In the premise of Theorem~\ref{main_theorem}, suppose $\tau_1=0, ~\revision{\tau_2 = 2}{\tau_2=1}$,  and $\beta \in (0,  1 -\revision{\tfrac{\sqrt{6}}{3}}{\tfrac{\sqrt{6}}{3}}]$. 
Let $\alpha$ denote an absolute constant in $[0,1]$.  We set \revision{$\eta_2 = \tfrac{\beta}{2L_1}$}{$\eta_2 = \min\left\{(1-\beta)\eta_1, \tfrac{1}{4L_1} \right\}$}, and for $t\geq 3$,
\begin{align}
    \eta_t &= \min\left\{\revision{}{\tfrac{4}{3}\eta_{t-1}}, \tfrac{\tau_{t-2}+1}{\tau_{t-1}}\cdot \eta_{t-1}, \tfrac{\tau_{t-1}}{4 L_{t-1}}\right\}, \label{def_eta_t}\\
    \tau_t &= \tau_{t-1}+ \tfrac{\alpha}{2} + \tfrac{2 (1-\alpha) \eta_t L_{t-1}}{\tau_{t-1}}. \label{def_tau_t}
\end{align}
Then we have
\begin{align}\label{coro_2_eq0}
\revision{}{\Psi(x_k) - \Psi(x^*) \leq \tfrac{12 \hat L_k}{(\alpha k+4-2\alpha)(\alpha k + 3 - 2\alpha)}\cdot \left[\tfrac{1}{\beta} \|z_0 - x^*\|^2 + \eta_2\left( \revision{3L_1}{\tfrac{5L_1}{2}} - \tfrac{1}{\eta_1} \right)\|z_1 - z_0\|^2 \right],}
\end{align}
\revision{}{and}
\begin{align}\label{coro_2_eq1}
\revision{f}{\Psi}(\bar x_k) - \revision{f}{\Psi}(x^*) &\leq \tfrac{6}{ \sum_{t=1}^k (3+\alpha(t-2))/\hat L_t} \cdot \left[\tfrac{1}{\beta} \|z_0 - x^*\|^2 + \eta_2\left( \revision{3L_1}{\tfrac{5L_1}{2}} - \tfrac{1}{\eta_1} \right)\|z_1 - z_0\|^2 \right]\nn\\
&\leq \tfrac{12 \hat L_k }{6 k + \alpha k (k-3)}\cdot \left[\tfrac{1}{\beta} \|z_0 - x^*\|^2 + \eta_2\left( \revision{3L_1}{\tfrac{5L_1}{2}} - \tfrac{1}{\eta_1} \right)\|z_1 - z_0\|^2 \right],
\end{align}
where $\hat L_t:=\max\{\revision{}{\tfrac{1}{4(1-\beta)\eta_1}}, L_1, ..., L_t\}$ and $\bar x_k$ is defined in \eqref{def_bar_x_k}.
\end{corollary}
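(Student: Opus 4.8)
The plan is to reduce the corollary to the two master bounds \eqref{main_thm_eq1} and \eqref{main_thm_eq2} of Theorem~\ref{main_theorem}, and then supply the two estimates they require: a lower bound on $\tau_k+1$ and a per-iteration lower bound on $\eta_t$ in terms of $\hat L_{t-1}$. First I would check that the rule \eqref{def_eta_t}--\eqref{def_tau_t} verifies the hypotheses \eqref{cond_3}, \eqref{cond_1}, \eqref{cond_4}. The choices $\tau_1=0$, $\tau_2=1$, $\beta_t=\beta$ and $\eta_2=\min\{(1-\beta)\eta_1,\tfrac{1}{4L_1}\}$ match \eqref{cond_3}--\eqref{cond_1} directly; the only nontrivial point is that $\eta_t\le\tfrac43\eta_{t-1}$ must imply $\eta_t\le 2(1-\beta)^2\eta_{t-1}$, which holds because $\beta\le 1-\tfrac{\sqrt6}{3}$ gives $2(1-\beta)^2\ge\tfrac43$. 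This is exactly where the admissible range of $\beta$ is used, so Theorem~\ref{main_theorem} applies and it remains to control $(\tau_k+1)\eta_{k+1}$ and $\sum_{t=2}^{k+1}\eta_t$.

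Next I would pin down the growth of $\tau_t$. Since $\eta_t\le\tfrac{\tau_{t-1}}{4L_{t-1}}$, the correction term in \eqref{def_tau_t} satisfies $0\le\tfrac{2(1-\alpha)\eta_t L_{t-1}}{\tau_{t-1}}\le\tfrac{1-\alpha}{2}$, so $\tfrac{\alpha}{2}\le\tau_t-\tau_{t-1}\le\tfrac12$. Starting from $\tau_2=1$ and summing, this yields $1+\tfrac{\alpha(t-2)}{2}\le\tau_t\le\tfrac{t}{2}$ for $t\ge2$; in particular $\tau_k+1\ge\tfrac{\alpha k+4-2\alpha}{2}$. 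I would also record the consequence $\tfrac{\tau_{t-2}+1}{\tau_{t-1}}\ge\tfrac{\tau_{t-1}+1/2}{\tau_{t-1}}\ge\tfrac{t}{t-1}$, obtained from $\tau_{t-1}\le\tau_{t-2}+\tfrac12$ and $\tau_{t-1}\le\tfrac{t-1}{2}$, which is needed for the extrapolation term below.

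The crux is the per-iteration estimate
\[
\eta_t\ \ge\ \tfrac{3+\alpha(t-3)}{12\,\hat L_{t-1}},\qquad t\ge2,
\]
which I would prove by induction using the monotonicity $\hat L_{t-2}\le\hat L_{t-1}$ throughout. The base case $t=2$ follows from $\eta_2\ge\tfrac{1}{4\hat L_1}\ge\tfrac{3-\alpha}{12\hat L_1}$. For $t=3$ I would compute directly: since $\tau_1=0$ and $\tau_2=1$, the extrapolation cap equals $\eta_2$ and $\eta_3=\min\{\eta_2,\tfrac{1}{4L_2}\}\ge\tfrac{1}{4\hat L_2}=\tfrac{3}{12\hat L_2}$. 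For $t\ge4$ I would lower-bound each of the three terms in \eqref{def_eta_t} by $\tfrac{c_t}{\hat L_{t-1}}$ with $c_t:=\tfrac{3+\alpha(t-3)}{12}$: the curvature term uses $\tfrac{\tau_{t-1}}{4L_{t-1}}\ge\tfrac{2+\alpha(t-3)}{8\hat L_{t-1}}\ge\tfrac{c_t}{\hat L_{t-1}}$; the extrapolation term uses $\tfrac{\tau_{t-2}+1}{\tau_{t-1}}\eta_{t-1}\ge\tfrac{t}{t-1}\tfrac{c_{t-1}}{\hat L_{t-1}}$ together with $\tfrac{t}{t-1}c_{t-1}\ge c_t$; and the growth term uses $\tfrac43\eta_{t-1}\ge\tfrac43\tfrac{c_{t-1}}{\hat L_{t-1}}$ together with $\tfrac43 c_{t-1}\ge c_t$. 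Each of these reduces to an elementary inequality that holds exactly on $\alpha\in[0,1]$. The main obstacle is precisely the bookkeeping at the boundary $\alpha=1$ and small $t$: the growth-term inequality $\tfrac43 c_{t-1}\ge c_t$ fails at $t=3$, which is why $t=3$ must be treated separately and why the extra slack in $\eta_2$ (namely $\tfrac14$ versus $c_2=\tfrac{3-\alpha}{12}$) has to be carried through the first step.

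Finally I would assemble the bounds. Multiplying the $\tau_k+1$ bound by the $t=k+1$ instance $\eta_{k+1}\ge\tfrac{\alpha k+3-2\alpha}{12\hat L_k}$ gives $(\tau_k+1)\eta_{k+1}\ge\tfrac{(\alpha k+4-2\alpha)(\alpha k+3-2\alpha)}{24\hat L_k}$; substituting into \eqref{main_thm_eq1} and noting that the bracket in \eqref{coro_2_eq0} is twice that of \eqref{main_thm_eq1} yields \eqref{coro_2_eq0}. For \eqref{coro_2_eq1} I would sum the per-iteration bound and reindex, $\sum_{t=2}^{k+1}\eta_t\ge\sum_{t=1}^{k}\tfrac{3+\alpha(t-2)}{12\hat L_t}$, which, plugged into \eqref{main_thm_eq2} and again using that the corollary's bracket doubles (converting $12$ into $6$), gives the first inequality of \eqref{coro_2_eq1}; the second inequality then follows from $\hat L_t\le\hat L_k$ and the closed form $\sum_{t=1}^k(3+\alpha(t-2))=\tfrac{6k+\alpha k(k-3)}{2}$.
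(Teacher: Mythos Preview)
Your proposal is correct and follows essentially the same route as the paper's proof: verify that \eqref{cond_3}--\eqref{cond_4} hold (using $2(1-\beta)^2\ge\tfrac43$ from $\beta\le1-\tfrac{\sqrt6}{3}$), derive the two-sided bound $1+\tfrac{\alpha(t-2)}{2}\le\tau_t\le\tfrac{t}{2}$ together with the ratio estimate $\tfrac{\tau_{t-2}+1}{\tau_{t-1}}\ge\tfrac{t}{t-1}$, prove $\eta_t\ge\tfrac{3+\alpha(t-3)}{12\hat L_{t-1}}$ by induction with $t=2,3$ as base cases, and then feed these into \eqref{main_thm_eq1}--\eqref{main_thm_eq2}. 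The only cosmetic difference is that in the inductive step you lower-bound each of the three terms in \eqref{def_eta_t} separately, whereas the paper first observes $\tfrac43\ge\tfrac{t+1}{t}$ for $t\ge3$ to merge the growth cap into the extrapolation cap and then only checks two terms; both arguments yield the same bound.
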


\begin{proof}
\revision{}{First, by using the condition $\beta\leq 1 - \tfrac{\sqrt{6}}{3}$, we have $2(1-\beta)^2 \geq \tfrac{4}{3}$, which together with \eqref{def_eta_t} ensures the stepsize rule \eqref{cond_4} is satisfied.} 
\revision{Next, since $\tau_2 \geq 2$, we have $\tau_t \geq 2$ for all $t \geq 2$. Consequently, for $t\geq 3$
\begin{align*}
\tfrac{\tau_{t-1}+1}{\tau_{t}} \leq\tfrac{\tau_{t-1} + 1}{\tau_{t-1}} \leq \tfrac{3}{2}.
\end{align*}
Also, we have $\tfrac{\tau_1 + 1}{\tau_2} = \tfrac{1}{2}$.
Invoking that $\beta\leq 1 - \tfrac{\sqrt{3}}{2}$, we can always ensure that $\tfrac{\tau_{t-2}+1}{\tau_{t-1}} \leq 2(1-\beta)^2$, which indicates that the condition \eqref{cond_4} is satisfied.}{} 
Next, by the definition of $\tau_t$ in \eqref{def_tau_t}, we have for $t \geq 3$
\begin{align}\label{tau_bound}
\tau_t = \tau_2 + \tsum_{j=3}^t [\tfrac{\alpha}{2} + \tfrac{2(1-\alpha)\eta_j L_{j-1}}{\tau_{j-1}}] \overset{(i)}\leq 1 + \tsum_{j=3}^t (\tfrac{1}{2}) = \tfrac{t}{2},
\end{align}
where step (i) follows from the condition $\eta_t \leq \tfrac{\tau_{t-1}}{4 L_{t-1}}$. Therefore, for $t \geq 3$, we always have
\begin{align}\label{bound_tau_ratio}
\tfrac{\tau_{t-1}+1}{\tau_{t}} \overset{(i)}\geq \tfrac{\tau_{t} +\frac{1}{2}}{\tau_{t}} \overset{(ii)}\geq \tfrac{t+1}{t},
\end{align} 
where step (i) follows from  $\tau_{t} \leq \tau_{t-1} + \tfrac{1}{2}$ due to \eqref{def_eta_t} and \eqref{def_tau_t}, and step (ii) follows from Ineq.~\eqref{tau_bound}.
Meanwhile, we have that 
\begin{align*}
\tau_t = \tau_2 + \tsum_{j=3}^t [\tfrac{\alpha}{2} + \tfrac{2(1-\alpha)\eta_j L_{j-1}}{\tau_{j-1}}] \geq 1 +\tfrac{\alpha (t-2)}{2}.
\end{align*}
Next, we use inductive arguments to prove that $\eta_{t} \geq \tfrac{3 + \alpha(t-3)}{12\hat L_{t-1}}$ for $t \geq 2$. First, it is easy to see that $\eta_2 = \min\{\revision{2(1-\beta)\eta_1}{(1-\beta)\eta_1}, \revision{\tfrac{\beta}{2L_1}}{\tfrac{1}{4L_1}} \} = \revision{\tfrac{\beta}{2 \revision{L_1}{\hat L_1}}}{\tfrac{1}{4\hat L_1}}$ \revision{}{and $\eta_3 = \min\{\eta_2, \tfrac{1}{4 L_2}\} = \tfrac{1}{4\hat L_2}$} satisfies this condition. Then, we assume that for a $t \geq 3$, we have $\eta_{t} \geq \tfrac{3 + \alpha(t-3)}{12 \hat L_{t-1}}$. Based on the stepsize rule \eqref{def_eta_t}, we have
\begin{align*}
\eta_{t+1} &= \min\big\{\tfrac{4}{3}\eta_{t}, \tfrac{\tau_{t-1}+1}{\tau_t} \eta_t, \tfrac{ \tau_t}{4 L_t}\big\}\\
&\overset{(i)}\geq \min\big\{\tfrac{t+1}{t}\eta_t, \tfrac{1+ \alpha(t-2)/2}{4 L_t}\big\}\\
&\geq \min\big\{\tfrac{t+1}{t} \cdot \tfrac{3 + \alpha(t-3)}{12 \hat L_{t-1}}, \tfrac{2+ \alpha(t-2)}{8 L_t}  \big\}\\
& \geq \tfrac{3 + \alpha(t-2)}{12\hat L_t},
\end{align*}
where step (i) follows from Ineq.~\eqref{bound_tau_ratio}, and we complete the inductive arguments. By utilizing \revision{$\eta_{t} \geq \tfrac{4 + \alpha(t-2)}{\revision{48}{40} \hat L_{t-1}}$}{the lower bounds on $\eta_t$ and $\tau_t$} in Ineqs. \eqref{main_thm_eq1} and \eqref{main_thm_eq2} in Theorem \ref{main_theorem}, we obtain the desired convergence guarantees.
\end{proof}
\vgap

Corollary~\ref{main_corollary_2} introduces a stepsize policy that allows AC-FGM to take larger stepsizes $\eta_t$ during execution. Remarkably, this policy remains line-search free, \revision{}{except for the first iteration where one can either consider using an initial line search stated after Corollary~\ref{main_corollary_1} to determine the initial stepsize or run without line search but possibly pay the price of $\|z_0-z_1\|^2$ in the convergence rate. On the other hand, the stepsize rule contains}
 one extra constant $\alpha$, which can be set as any number in $[0,1]$. When $\alpha=1$, this policy reduces to the policy in Corollary~\ref{main_corollary_1}. It should be noted that, although the worst-case upper bound in Ineq.~\eqref{coro_2_eq1} suggests us to choose a larger $\alpha$, a smaller $\alpha$ may allow the algorithm to accommodate larger stepsizes and consequently achieve faster convergence in practice. 
Here we explain how it works in more detail.
\begin{itemize}
    \item Let us consider the calculation of the stepsize $\eta_t$ in \eqref{def_eta_t}. If the previous stepsize $\eta_{t-1}$ is relatively conservative regarding the local smoothness level $L_{t-1}$, the \revision{first option}{option $\tfrac{\tau_{t-2}+1}{\tau_{t-1}}\cdot \eta_{t-1}$} in the \eqref{def_eta_t} will be activated. Consequently, together with the rule in \eqref{def_tau_t}, the parameter $\tau_t$ in \eqref{def_tau_t} will grow more slowly than $\tau_t = \tau_{t-1} + \tfrac{1}{2}$ in Corollary~\ref{main_corollary_1}, thus making $\tfrac{\tau_{t-1} + 1}{\tau_t}$ larger and allowing the stepsizes to grow faster in subsequent iterations. 
    \item Conversely, when the stepsize $\eta_{t-1}$ is too large regarding the local smoothness level $L_{t-1}$, the \revision{second option for the stepsize}{option $\tfrac{\tau_{t-1}}{4 L_{t-1}}$}  in \eqref{def_eta_t} will be activated for $\eta_t$, and $\tau_t$ will grow faster, as $\tau_t = \tau_{t-1} + \tfrac{1}{2}$, consequently slowing down the growth of the stepsizes in subsequent iterations. 
\end{itemize}
In summary, if we choose a small $\alpha$, the interplay between $\eta_t$ and $\tau_t$ can enhance the adaptivity of the AC-FGM updates to the local smoothness level, thus potentially leading to more efficient algorithmic behavior.

From the complexity point of view, when $\alpha$ is chosen as a constant in $(0,1]$ \revision{}{and the initial stepsize $\eta_1$ is chosen properly, i.e., $\eta_1 \leq \tfrac{2}{5 L_1}$ and $\eta_1^{-1} \leq \mathcal{O}(L)$ (see the discussion after Corollary~\ref{main_corollary_1} on how to meet these conditions)}, AC-FGM need at most
$
\mathcal{O}\left(\sqrt{\tfrac{L \|z_0 - x^*\|^2}{\epsilon}} \right)
$
iterations to find an $\epsilon$-optimal solution, also matching the optimal complexity in \cite{nemirovski1983problem}. On the other hand, when $\alpha = 0$, \revision{}{the output sequence $\{\bar x_k\}$ of} AC-FGM can guarantee an $\mathcal{O}\left(\tfrac{L\|z_0-x^*\|}{\epsilon} \right)$ complexity for finding an $\epsilon$-optimal solution, which matches the convergence rate of vanilla gradient descent or adaptive gradient descent methods. However, this is a worst-case guarantee determined by edge cases. In the upcoming section on numerical experiments (Section \ref{sec_numerical}), we will show that AC-FGM with $\alpha$ selected across a certain range of $[0,0.5]$ (including $\alpha=0$) can significantly outperform the non-accelerated adaptive algorithms, as well as other accelerated algorithms with line search. \revision{}{Meanwhile, we perform an ablation study in the experiments, showing that performing the initial line search to choose $\eta_1$ does not have a significant impact on the algorithm performance. Therefore, AC-FGM can be efficiently implemented in a completely line search-free manner.}


\section{AC-FGM for convex problems with H\"{o}lder continuous gradients}\label{sec_holder}
In this section, we consider the weakly smooth setting where the convex function $f: \bbr^n \rightarrow \bbr$ has H\"{o}lder continuous (sub)gradients, i.e., for some $\nu \in [0,1]$,
\begin{align}\label{eq:holder}
    \|g(x) - g(y)\| \leq L_\nu \|x-y\|^\nu, \quad x, y \in \bbr^n.
\end{align}
This inequality ensures that 
\begin{align}\label{eq:holder_2}
f(y) - f(x) - \langle g(x), y-x\rangle \leq \tfrac{L_\nu}{1+\nu} \|x-y\|^{1+\nu}.
\end{align}
It is noteworthy that \revision{}{Lemma 2 in} Nesterov \cite{nesterov2015universal}
established a relation that bridges this H\"{o}lder continuous setting and the smooth setting by showing that for any $\delta > 0$ and $\tilde L \geq \left[\tfrac{1-\nu}{(1+\nu)\delta} \right]^{\frac{1-\nu}{1+\nu}}\cdot L_\nu^{\frac{2}{1+\nu}}$,
\begin{align}\label{holder_condition_nesterov}
f(y) - f(x) - \langle g(x), y-x\rangle \leq \tfrac{\tilde L}{2} \|y-x\|^2 + \revision{\delta}{\tfrac{\delta}{2}}, \quad \forall x, y \in X.
\end{align}
Motivated by this result, in Lemma~\ref{lemma:holder_1}, we derive the lower bound for $f(y) - f(x) - \langle g(x), y-x\rangle$ similar to the first inequality of \eqref{eq:smooth_2}, with a tolerance $\delta$. 
This lower bound is crucial for our analysis of the AC-FGM method
applied to the H\"{o}lder continuous setting. \textcolor{black}{Furthermore, we find that this lower bound is an equivalent argument of the  H\"{o}lder continuous condition \eqref{eq:holder} and the relaxed quadratic upper bound \eqref{holder_condition_nesterov}.}

\begin{lemma}\label{lemma:holder_1}
Let function $f$ satisfy condition \eqref{eq:holder}. Then for any $\delta >0$ and
\begin{align}\label{condition_tilde_L}
\tilde L \geq \left[\tfrac{1-\nu}{(1+\nu)\delta} \right]^{\frac{1-\nu}{1+\nu}}\cdot L_\nu^{\frac{2}{1+\nu}}, 
\end{align}
we have
\begin{align}\label{eq:holder_3}
    f(y) \geq f(x) + \langle g(x), y -x\rangle + \tfrac{1}{2\tilde L}\|g(x)-g(y)\|^2 - \tfrac{\delta}{2}, \quad \forall x, y \in X.
\end{align}
\revision{}{Consequently, we have
}
\begin{align}\label{eq:holder_4}
\color{black}
    \tfrac{\|g(x) - g(y)\|^2}{2\tilde L} - \tfrac{\tilde L \|x - y\|^2}{2} \leq \delta, \quad \forall x, y \in X.
\end{align}
{\color{black} Moreover, the arguments in \eqref{eq:holder}, \eqref{holder_condition_nesterov}, and \eqref{eq:holder_3} are equivalent up to absolute constants. }
\end{lemma}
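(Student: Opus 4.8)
The plan is to prove the three assertions by establishing a cyclic chain of implications among \eqref{eq:holder}, \eqref{holder_condition_nesterov}, and \eqref{eq:holder_3}, with \eqref{eq:holder_4} dropping out as an immediate byproduct. I would treat Nesterov's relaxed quadratic upper bound \eqref{holder_condition_nesterov} as already available (it is precisely Lemma~2 of \cite{nesterov2015universal}, which supplies the implication \eqref{eq:holder} $\Rightarrow$ \eqref{holder_condition_nesterov}) and take \eqref{holder_condition_nesterov} as the starting point for the other directions, so that the only new content is the passage from the relaxed upper bound to the tolerant co-coercivity bound \eqref{eq:holder_3} and, for the equivalence claim, the reverse passage back to \eqref{eq:holder}.

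First I would derive \eqref{eq:holder_3} by a co-coercivity argument adapted to the tolerance $\delta$. Fix $x$ and set $\phi(z) := f(z) - \langle g(x), z\rangle$, a convex function whose subgradient vanishes at $z=x$, so $\phi(x) = \min_z \phi(z)$. Since $\phi$ inherits the relaxed upper bound \eqref{holder_condition_nesterov} with gradient $g(\cdot)-g(x)$, evaluating that bound at $w = y - \tfrac{1}{\tilde L}\bigl(g(y)-g(x)\bigr)$ relative to $y$ gives $\phi(w) \leq \phi(y) - \tfrac{1}{2\tilde L}\|g(y)-g(x)\|^2 + \tfrac{\delta}{2}$; combining with $\phi(x)\leq\phi(w)$ and unwinding $\phi(y)-\phi(x) = f(y)-f(x)-\langle g(x),y-x\rangle$ yields \eqref{eq:holder_3}. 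Then \eqref{eq:holder_4} follows at once: adding \eqref{eq:holder_3} to \eqref{holder_condition_nesterov} for the same pair $(x,y)$ cancels the linear term and leaves $\tfrac{1}{2\tilde L}\|g(x)-g(y)\|^2 \leq \tfrac{\tilde L}{2}\|x-y\|^2 + \delta$, which is exactly \eqref{eq:holder_4}.

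The substantive part is the equivalence ``up to absolute constants,'' for which I would close the cycle by proving \eqref{eq:holder_3} $\Rightarrow$ \eqref{eq:holder}. Symmetrizing \eqref{eq:holder_3} in $x,y$ and adding the two copies produces a tolerant monotonicity estimate $\langle g(y)-g(x), y-x\rangle \geq \tfrac{1}{\tilde L}\|g(x)-g(y)\|^2 - \delta$; Cauchy--Schwarz converts this into a quadratic inequality in $\|g(x)-g(y)\|$, giving $\|g(x)-g(y)\| \leq \tilde L\|x-y\| + \sqrt{\tilde L \delta}$. The key maneuver is to exploit that $\delta>0$ is free while $\tilde L$ must scale as $\tilde L \sim \delta^{-(1-\nu)/(1+\nu)} L_\nu^{2/(1+\nu)}$ per \eqref{condition_tilde_L}: substituting this relation and optimizing over $\delta$ (the balancing choice is $\delta \sim L_\nu \|x-y\|^{1+\nu}$) collapses both terms to the common scale $L_\nu \|x-y\|^\nu$, recovering \eqref{eq:holder} up to an absolute multiplicative constant.

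I expect this last, parametric optimization to be the main obstacle. Unlike the purely algebraic first two parts, here one must simultaneously track the exponents of $\delta$, $\tilde L$, $L_\nu$, and $\|x-y\|$ and verify that the balancing choice of $\delta$ yields exactly the power $r^\nu$ while keeping every constant absolute (independent of $\nu$, $\delta$, and $L_\nu$); a clean bookkeeping device is to record $a = \tfrac{1-\nu}{1+\nu}$, note $a+1 = \tfrac{2}{1+\nu}$, and check that the two resulting monomials share the exponent pair $(L_\nu^{1}, r^{\nu})$. Finally, one should note that the auxiliary point $w$ used in the co-coercivity step lies in the domain, which is immediate here because $f$ is assumed to be defined on all of $\bbr^n$ throughout this section.
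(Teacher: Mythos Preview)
Your proposal is correct and follows essentially the same route as the paper: the co-coercivity argument via $\phi(z)=f(z)-\langle g(x),z\rangle$ and the relaxed descent step for \eqref{eq:holder_3}, the sum with \eqref{holder_condition_nesterov} for \eqref{eq:holder_4}, and closing the cycle \eqref{eq:holder_3}$\Rightarrow$\eqref{eq:holder} by symmetrizing and optimizing over $\delta$.

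The only tactical difference is in the last implication. You apply Cauchy--Schwarz \emph{before} optimizing over $\delta$, landing on the quadratic bound $\|g(x)-g(y)\|\le \tilde L\|x-y\|+\sqrt{\tilde L\delta}$ and then balancing $\delta\sim L_\nu\|x-y\|^{1+\nu}$. The paper instead first maximizes the right-hand side of the symmetrized inequality over $\delta$ to obtain the clean power inequality
\[
\langle g(x)-g(y),\,x-y\rangle \;\ge\; \tfrac{2\nu}{1+\nu}\,L_\nu^{-1/\nu}\,\|g(x)-g(y)\|^{1+1/\nu},
\]
and only then applies Cauchy--Schwarz, yielding the explicit constant $\bigl(\tfrac{1+\nu}{2\nu}\bigr)^\nu\le 1.262$. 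Your order works just as well and still gives an absolute constant (roughly $\le 2$); the paper's order buys a sharper constant and avoids solving the quadratic, at the cost of a slightly more delicate limit analysis near $\nu=0$.
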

\begin{proof}\revision{}{
Since $f$ has H\"{o}lder continuous (sub)gradients, it satisfies condition \eqref{holder_condition_nesterov} that follows from Lemma 2 in \cite{nesterov2015universal}. 
Let us define $\phi(y):= f(y) - \langle g(x), y\rangle$. Then clearly, $\phi$  also satisfies condition \eqref{holder_condition_nesterov}. We have $\nabla \phi(y) = g(y) - g(x)$, which indicates that 
\begin{align*}
\min_{y} \phi(y) = \phi(x).
\end{align*}}
\revision{Consequently, we have for any $\tilde L > 0$,
\begin{align}\label{lemma_eq_1}
    \phi(x)&\leq \phi(y - \tfrac{1}{\tilde L} \nabla \phi(y))\nn\\
    &\leq \phi(y) + \langle \nabla \phi(y), -\tfrac{1}{\tilde L}\nabla \phi(y)\rangle + \tfrac{L_\nu}{1+\nu} \|\tfrac{1}{\tilde L}\nabla \phi(y)\|^{1+\nu}\nn\\
    & = \phi(y) - \tfrac{1}{\tilde L}\|\nabla \phi(y)\|^2 + \tfrac{L_\nu}{(1+\nu)\tilde L^{1+\nu}} \|\nabla \phi(y)\|^{1+\nu}.
\end{align}}{
Consequently, we have for any $\tilde L$ that satisfies \eqref{condition_tilde_L},
\begin{align}\label{lemma_eq_1}
    \phi(x)&\leq \phi(y - \tfrac{1}{\tilde L} \nabla \phi(y))\nn\\
    &\overset{(i)}\leq \phi(y) + \langle \nabla \phi(y), -\tfrac{1}{\tilde L}\nabla \phi(y)\rangle + \tfrac{\tilde L}{2}\|\tfrac{1}{\tilde L} \nabla \phi(y)\|^2 + \tfrac{\delta}{2}\nn\\
    & = \phi(y) - \tfrac{1}{2\tilde L}\|\nabla \phi(y)\|^2 + \tfrac{\delta}{2},
    \end{align}
    where step (i) follows from Ineq.~\eqref{holder_condition_nesterov}.}
\revision{
On the other hand, it follows from
the Young's inequality that for all $\tau\ge 0$ and $s \ge 0$  
\begin{align*}
\tfrac{1}{p} \tau^p + \tfrac{1}{q}s^q \geq \tau s, \quad \text{where} ~p, q \geq 1,~ \tfrac{1}{p} + \tfrac{1}{q} = 1.
\end{align*}
By taking $p = \tfrac{2}{1+\nu}$, $q = \tfrac{2}{1-\nu}$ and $\tau = t^{1 + \nu}$, we get
\begin{align*}
t^{1+\nu} \leq \tfrac{1+\nu}{2s}\cdot t^2 + \tfrac{1-\nu}{2} \cdot s^{\frac{1+\nu}{1-\nu}}.
\end{align*}
If we choose $s = \left[\tfrac{1+\nu}{1-\nu} \cdot \tfrac{\delta}{L_\nu} \right]^{\frac{1-\nu}{1+\nu}}$, then $\tfrac{1-\nu}{1+\nu} \cdot L_\nu s^{\frac{1-\nu}{1+\nu}} = \delta$. Therefore,
\begin{align*}
\tfrac{L_\nu}{1+\nu} \cdot t^{1+\nu} 
\leq \tfrac{1}{2s} \cdot L_\nu t^2 + \tfrac{1-\nu}{2(1+\nu)} \cdot L_\nu s^{\frac{1-\nu}{1+\nu}}
= \tfrac{1}{2s} \cdot L_\nu t^2 + \tfrac{\delta}{2}.
\end{align*}
Now let us take $t = \tfrac{\|\nabla \phi(y)\|}{\tilde L}$ and assume $\tilde L \geq \tfrac{L_\nu}{s}$, then we have
\begin{align}\label{lemma_eq_2}
\tfrac{L_\nu}{(1+\nu)\tilde L^{1+\nu}} \|\nabla \phi(y)\|^{1+\nu} \leq \tfrac{L_\nu}{2s} \cdot \tfrac{\|\nabla \phi(y)\|^2}{\tilde L^2} + \tfrac{\delta}{2}\leq \tfrac{1}{2 \tilde L} \|\nabla \phi(y)\|^2 + \tfrac{\delta}{2}.
\end{align}}{}
\revision{Combining Ineqs. \eqref{lemma_eq_1} and \eqref{lemma_eq_2}, and}{}Recalling the expressions of $\phi$, $\nabla \phi$, we obtain that
\begin{align*}
f(y) \geq f(x) + \langle g(x), y -x\rangle + \tfrac{1}{2\tilde L}\|g(x) - g(y)\|^2 - \tfrac{\delta}{2},
\end{align*}
which completes the proof \revision{}{of Ineq.~\eqref{eq:holder_3}. Moreovoer, Ineq.~\eqref{eq:holder_4} follows directly by summing up Ineqs.~\eqref{holder_condition_nesterov} and \eqref{eq:holder_3}.}

{\color{black} Next, we establish the equivalence of \eqref{eq:holder}, \eqref{holder_condition_nesterov}, and \eqref{eq:holder_3}. First, we know that \eqref{eq:holder} indicates the statement in \eqref{holder_condition_nesterov} thanks to Lemma 2 in \cite{nesterov2015universal}. Second, we have that \eqref{holder_condition_nesterov} indicates \eqref{eq:holder_3} given the proof of \eqref{eq:holder_3} above. Therefore, it remains to show that \eqref{eq:holder_3} implies \eqref{eq:holder}. To demonstrate this, we first add two copies of Ineq. \eqref{eq:holder_3} with $x$ and $y$ interchanged to obtain 
\begin{align*}
    \langle g(x) - g(y), x -y\rangle \geq \tfrac{1}{\bar L(\delta)}\|g(x)-g(y)\|^2 - \delta, ~ \text{where}~ \bar L(\delta):=\left[\tfrac{1-\nu}{(1+\nu)\delta} \right]^{\frac{1-\nu}{1+\nu}}\cdot L_\nu^{\frac{2}{1+\nu}}.
\end{align*}
Next, we maximize the RHS of the above inequality with respect to $\delta$ and obtain
\begin{align*}
    \langle g(x) - g(y), x -y\rangle \geq \max_{\delta > 0} \left\{ \tfrac{1}{\bar L(\delta)}\|g(x)-g(y)\|^2 - \delta\right\} = \tfrac{2\nu}{1+\nu} L_\nu^{-1/\nu}\|g(x)-g(y)\|^{1 + 1/\nu}.
\end{align*}
Then, by applying Cauchy-Schwarz inequality on the LHS of the above inequality and rearranging the terms, we have
\begin{align*}
\|g(x)-g(y)\| \leq \left( \tfrac{1+\nu}{2\nu}\right)^\nu L_\nu \|x-y\|^\nu \leq 1.262 L_\nu \|x-y\|^\nu.
\end{align*}
Thus \eqref{eq:holder_3} indicates \eqref{eq:holder} up an absolute constant, and we complete the proof.}
\end{proof}
\vgap

\revision{Lemma~\ref{lemma:holder_1}}{Clearly, Ineq.~\eqref{eq:holder_3}} provides a similar bound to the first inequality of \eqref{eq:smooth_2} with a tolerance parameter $\delta$.
Similarly, \revision{the forthcoming lemma }{Ineq.~\eqref{eq:holder_4}} serves as another bridge between the smooth case and the Hölder continuous case, but with a focus on Ineq. \eqref{eq:smooth_1}.
\revision{The two lemmas above}{These inequalities} indicate that, with the knowledge of the tolerance parameter $\delta$, we can treat a problem with H\"{o}lder continuous gradients as a smooth problem with $\tilde L = \mathcal{O}\left(\left(\tfrac{1}{\delta} \right)^{\frac{1-\nu} {1+\nu}}\cdot L_\nu^{\frac{2}{1+\nu}}\right)$. 

We have established in Theorem~\ref{main_theorem} and Corollary~\ref{main_corollary_1}  that AC-FGM can solve the smooth problem \eqref{main_prob} without knowing the Lipschitz constant $L$. Similarly, we can show AC-FGM
does not require prior knowledge of $L_\nu$ and $\nu$ when applied
to solve H\"{o}lder continuous problems. Specifically, for a given accuracy $\epsilon >0$, we need to define the local constants:
\begin{align}\label{define_tilde_L}
\tilde L_{t}(\epsilon) := \begin{cases}
      \frac{\sqrt{\|x_1 - x_0\|^2\|g(x_1)-g(x_0)\|^2 + (\epsilon/4)^2} - \epsilon/4}{\|x_1 - x_0\|^2}, &\text{if}~~t =1,\\
       \frac{\|g(x_{t}) - g(x_{t-1})\|^2}{2[f(x_{t-1}) - f(x_t) - \langle g(x_t), x_{t-1} - x_t\rangle] + \epsilon / \tau_{t}}, & \text{if}~~t \geq 2,
      \end{cases}
\end{align}
where $\tau_t$ is an algorithm parameter in Algorithm~\ref{alg1}.

Now we are in the position to present the convergence properties for AC-FGM under the H\"{o}lder continuous setting, and consequently establish the uniformly optimal convergence guarantees. The next proposition, serving as an analog of Propsition~\ref{prop_1}, provides a characterization of the recursive relationship for the iterates of AC-FGM.

\begin{proposition}\label{prop_2}
For any given accuracy $\epsilon > 0$, assume the parameters $\{\tau_t\}$, $\{\eta_t\}$ and $\{\beta_t\}$ satisfies conditions \eqref{cond_3} and
\begin{align}
    \eta_2 &\leq \min\left\{\revision{2(1-\beta)\eta_1}{(1-\beta)\eta_1}, \revision{\tfrac{\beta}{2\tilde L_1(\epsilon)}}{\tfrac{1}{4\tilde L_1(\epsilon)}} \right\},\label{cond_2_2_0}\\ 
    \eta_t &\leq \min\left\{2(1-\beta)^2 \eta_{t-1}, \tfrac{\tau_{t-1}}{4\tilde L_{t-1}(\epsilon)}, \tfrac{\tau_{t-2}+1}{\tau_{t-1}}\cdot\eta_{t-1}\right\}, ~t \geq 3,
   \label{cond_2_2}
\end{align}
where $\tilde L_t(\epsilon)$ is defined in \eqref{define_tilde_L}. We have for any $z \in X$,
    \begin{align}\label{theorem_2_eq4}
&\tsum_{t=1}^k \eta_{t+1}\left[\tau_t f(x_{t})+ \langle g(x_t), x_t - z\rangle - \tau_t f(x_{t-1})\revision{}{+ h(z_{t}) - h(z)} \right] + \tfrac{\eta_2}{2\eta_{1}} \left[\|z_1 - y_1\|^2 + \|z_2 - z_1\|^2 \right]\nn\\
& \leq \tfrac{1}{2\beta}\|y_1 - z\|^2 - \tfrac{1}{2\beta}\|y_{k+1}-z\|^2 + \tsum_{t=2}^{k+1}\tilde \Delta_{t} + \tsum_{t=3}^{k+1}\tfrac{\eta_t \epsilon}{2},
\end{align}
where
{\begin{align*}
\color{black}
\tilde \Delta_t :=  
       \eta_t \langle g(x_{t-1}) - g(x_{t-2}), z_{t-1} - z_t \rangle - \tfrac{\eta_t \tau_{t-1}\|g(x_{t-1}) - g(x_{t-2})\|^2}{2\tilde L_{t-1}(\epsilon)} - \tfrac{\|z_t - y_{t-1}\|^2}{2}.
\end{align*}}
\end{proposition}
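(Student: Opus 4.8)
The plan is to retrace the proof of Proposition~\ref{prop_1} essentially verbatim, isolating the single place where the smoothness identity \eqref{eq7} was invoked and replacing it with its H\"older counterpart. The first observation I would record is that the entire chain of inequalities \eqref{eq1}--\eqref{eq6_prime}, together with the $t=2$ base case \eqref{eq9_2}, uses only the optimality condition of the prox-mapping \eqref{prox-mapping}, the convexity of $h$, the relation $z_{t-1}-y_{t-2}=\tfrac{1}{1-\beta_{t-1}}(z_{t-1}-y_{t-1})$, and elementary algebraic identities; none of these steps uses the smoothness of $f$. Since the parameter conditions \eqref{cond_2_2_0}--\eqref{cond_2_2} coincide with \eqref{cond_1}--\eqref{cond_4} after the substitution $L_t\mapsto\tilde L_t(\epsilon)$ (and since the $t=2$ step only needs $\eta_2\le(1-\beta)\eta_1$), all of these inequalities carry over unchanged to the present setting.

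The only substantive change enters through the local-constant identity. In Proposition~\ref{prop_1} the definition of $L_t$ yielded the exact equality \eqref{eq7}. Here the definition \eqref{define_tilde_L} of $\tilde L_t(\epsilon)$ carries the extra summand $\epsilon/\tau_t$ in its denominator, so rearranging it gives, for all $t\ge 2$,
\begin{align*}
f(x_{t-1}) - f(x_t) - \langle g(x_t), x_{t-1}-x_t\rangle = \tfrac{\|g(x_t)-g(x_{t-1})\|^2}{2\tilde L_t(\epsilon)} - \tfrac{\epsilon}{2\tau_t},
\end{align*}
which is precisely \eqref{eq7} corrected by the tolerance $-\epsilon/(2\tau_t)$. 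Substituting this (with index $t-1$) into the derivation of \eqref{eq8}, the factor $\tau_{t-1}$ multiplying $\epsilon/(2\tau_{t-1})$ cancels, leaving an additive $+\tfrac{\epsilon}{2}$ on the right-hand side; after multiplying through by $\eta_t$ as in the passage to \eqref{eq9}, this contributes an extra $+\tfrac{\eta_t\epsilon}{2}$, with all remaining terms identical to the smooth case but with $L_{t-1}$ replaced by $\tilde L_{t-1}(\epsilon)$.

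Carrying this modified \eqref{eq9} through the same telescoping argument (summing \eqref{eq9_2} and the telescope of the corrected \eqref{eq9} for $t=3,\dots,k+1$) then produces exactly the claimed bound \eqref{theorem_2_eq4}: the collected quantities are the verbatim analogs $\tilde\Delta_t$ of $\Delta_t$, while the accumulated tolerances sum to $\tsum_{t=3}^{k+1}\tfrac{\eta_t\epsilon}{2}$. I expect the main point to watch to be bookkeeping rather than any genuine difficulty: one must confirm that the tolerance enters only for $t\ge 3$, which holds because the $t=2$ inequality \eqref{eq9_2} is derived from \eqref{eq6} and never invokes the local-constant identity (the condition $\tau_1=0$ removes the $\tau_{t-1}f(x_{t-2})$ term), explaining why the extra sum starts at $t=3$ rather than $t=2$. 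It is also worth double-checking that the cancellation $\tau_{t-1}\cdot\tfrac{\epsilon}{2\tau_{t-1}}=\tfrac{\epsilon}{2}$ is exactly what renders the accrued error independent of $\tau_{t-1}$, so that the per-iteration tolerance is the clean $\tfrac{\eta_t\epsilon}{2}$ appearing in the statement.
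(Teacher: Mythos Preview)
Your proposal is correct and follows essentially the same approach as the paper: reuse the optimality-condition chain \eqref{eq1}--\eqref{eq6_prime} and \eqref{eq9_2} verbatim, replace the smoothness identity \eqref{eq7} by the $\tilde L_t(\epsilon)$-based identity with the extra $-\epsilon/(2\tau_t)$, note the cancellation $\tau_{t-1}\cdot\tfrac{\epsilon}{2\tau_{t-1}}=\tfrac{\epsilon}{2}$, and telescope. Your bookkeeping observation that the tolerance only enters for $t\ge 3$ because the $t=2$ step never invokes the local-constant identity is exactly the point the paper relies on as well.
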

\begin{proof}
First, we recall that in the proof of Proposition \ref{prop_1}, the first few steps up to Ineq.~\revision{\eqref{eq6}}{\eqref{eq6_prime}} were obtained via the optimality conditions of step \eqref{prox-mapping} \revision{}{and the conditions on the stepsizes}. Therefore, Ineq.~\revision{\eqref{eq6}}{\eqref{eq6_prime}} is still valid in this proof. On the other hand, based on the definition of $\tilde L_t(\epsilon)$, we have that, for $t\geq 2$,
\begin{align}\label{theorem_2_eq1}
    f(x_{t-1}) - f(x_t) - \langle g(x_t), x_{t-1} - x_t\rangle = \tfrac{1}{2\tilde L_t(\epsilon)}\|g(x_t) - g(x_{t-1})\|^2 - \tfrac{\epsilon}{2\tau_{t}}.
\end{align}
Then for any $z \in X$, we have for $t \geq 3$
\begin{align}\label{theorem_2_eq2}
&\tau_{t-1} f(x_{t-1}) + \langle g(x_{t-1}), x_{t-1} - z\rangle \nn\\
&\overset{(i)}= \tau_{t-1} [f(x_{t-1}) + \langle g(x_{t-1}), x_{t-2} - x_{t-1}\rangle] + \langle g(x_{t-1}), z_{t-1} - z\rangle\nn\\
& \overset{(ii)}=\tau_{t-1} [f(x_{t-2}) - \tfrac{1}{2\tilde L_{t-1}(\epsilon)}\|g(x_{t-1}) - g(x_{t-2})\|^2 + \tfrac{\epsilon}{2\tau_{t-1}}] + \langle g(x_{t-1}), z_{t-1} - z\rangle,
\end{align}
where step (i) follows from the fact that $x - x_{t-1} = x - z_{t-1} - \tau_t(x_{t-2} - x_{t-1})$ due to Eq.~\eqref{output_series_2}, and step (ii) follows from Eq.~\eqref{theorem_2_eq1}. 
Combining Ineqs.~\eqref{eq6} and \eqref{theorem_2_eq2} and rearranging the terms, we obtain an analog of \eqref{eq9} in the proof of Proposition \ref{prop_1}, 
\begin{align}\label{theorem_2_eq3}
&\eta_t\left[\tau_{t-1} f(x_{t-1}) + \langle g(x_{t-1}), x_{t-1} - z\rangle - \tau_{t-1}f(x_{t-2}) \revision{}{+ h(z_{t-1}) - h(z)}\right] \nn\\
&\leq \tfrac{1}{2\beta_t}\|y_{t-1} - z\|^2 - \tfrac{1}{2\beta_t}\|y_{t}- z\|^2 + \eta_t \langle g(x_{t-1}) - g(x_{t-2}), z_{t-1}- z_t \rangle\nn\\
&\quad  - \tfrac{\eta_t \tau_{t-1}}{2\tilde L_{t-1}(\epsilon)}\|g(x_{t-1}) - g(x_{t-2})\|^2 - \tfrac{\revision{\beta_t}{1}}{2} \|z_t - y_{t-1}\|^2 + \tfrac{\eta_t \epsilon}{2}.
\end{align}
Notice that Ineq.~\eqref{eq9_2} in the proof of Proposition \ref{prop_1} also holds for this proof. Therefore, by taking the summation of Ineq.~\eqref{eq9_2} and the telescope sum of Ineq.~\eqref{theorem_2_eq3} for $t=3,..., k+1$, and recalling that $\tau_1=0$, $\beta_t = \beta$ for $t \geq 2$, we obtain
\begin{align*}
&\tsum_{t=1}^k \eta_{t+1}\left[\tau_t f(x_{t})+ \langle g(x_t), x_t - z\rangle - \tau_t f(x_{t-1}) \revision{}{+ h(z_{t}) - h(z)} \right] + \tfrac{\eta_2}{2\eta_{1}} \left[\|z_1 - y_1\|^2 + \|z_2 - z_1\|^2 \right]\nn\\
& \leq \tfrac{1}{2\beta}\|y_1 - z\|^2 - \tfrac{1}{2\beta}\|y_{k+1}-z\|^2 + \tsum_{t=2}^{k+1}\tilde \Delta_{t} + \tsum_{t=3}^{k+1}\tfrac{\eta_t \epsilon}{2},
\end{align*}
where $\tilde \Delta_t$ is defined in the statement of this proposition, and we complete the proof.
\end{proof}
\vgap

In the following theorem, we provide the convergence guarantees for AC-FGM in the H\"{o}lder continuous setting. Notice that we will focus on the convergence of the averaged iterate $\bar x_k$ rather than the last iterate $x_k$ in order to provide the optimal rate of convergence.


\begin{theorem}\label{theorem_universal_optimal}
For any given accuracy $\epsilon > 0$, suppose the parameters $\{\tau_t\}$, $\{\eta_t\}$ and $\{\beta_t\}$ satisfies conditions \eqref{cond_3}, \eqref{cond_2_2_0} and \eqref{cond_2_2}.
Then we have for $\bar x_k$ defined in \eqref{def_bar_x_k},
\begin{align*}
\revision{f}{\Psi}(\bar x_{k}) - \revision{f}{\Psi}(x^*) \leq \tfrac{1}{\sum_{t=2}^{k+1} \eta_t} \cdot \left[\tfrac{1}{2\beta} \|z_0 - x^*\|^2 + \left(\tfrac{\revision{3}{5}\eta_2 \tilde L_1(\epsilon)}{\revision{2}{4}} - \tfrac{\eta_2}{2\eta_1} \right)\|z_1 - z_0\|^2 \right] + \tfrac{\epsilon}{2}.
\end{align*}
\end{theorem}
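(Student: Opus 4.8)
The plan is to mirror the proof of Theorem~\ref{main_theorem} almost verbatim, replacing the recursion \eqref{eq9_3} of Proposition~\ref{prop_1} by its H\"older analog \eqref{theorem_2_eq4} of Proposition~\ref{prop_2}. The only genuinely new features are that every $L_t$ is now the tolerance-dependent quantity $\tilde L_t(\epsilon)$ from \eqref{define_tilde_L}, and that \eqref{theorem_2_eq4} carries the extra error $\tsum_{t=3}^{k+1}\tfrac{\eta_t\epsilon}{2}$. So first I would start from \eqref{theorem_2_eq4} and bound the terms $\tsum_{t=2}^{k+1}\tilde\Delta_t$ exactly as in \eqref{eq10}--\eqref{eq10_2}. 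For $t\geq 3$, the chain \eqref{eq10} goes through unchanged with $L_{t-1}$ replaced by $\tilde L_{t-1}(\epsilon)$: Cauchy--Schwarz, Young's inequality, the expansion $\|z_{t-1}-z_t\|^2\leq 2(1-\beta)^2\|z_{t-1}-y_{t-2}\|^2+2\|z_t-y_{t-1}\|^2$, and finally the stepsize bound $\eta_t\leq\tfrac{\tau_{t-1}}{4\tilde L_{t-1}(\epsilon)}$ from \eqref{cond_2_2} deliver the telescoping estimate $\tilde\Delta_t\leq\tfrac14\|z_{t-1}-y_{t-2}\|^2-\tfrac14\|z_t-y_{t-1}\|^2$.

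The term $\tilde\Delta_2$ is where the definition \eqref{define_tilde_L} of $\tilde L_1(\epsilon)$ does its work (note $\tau_1=0$ already kills the middle term of $\tilde\Delta_2$). Writing $a=\|x_1-x_0\|$ and $b=\|g(x_1)-g(x_0)\|$, a direct manipulation of \eqref{define_tilde_L} gives the two identities $ab\leq\tilde L_1(\epsilon)a^2+\tfrac{\epsilon}{4}$ (from $\tilde L_1(\epsilon)a^2+\tfrac{\epsilon}{4}=\sqrt{a^2b^2+(\epsilon/4)^2}\geq ab$) and $b^2=\tilde L_1(\epsilon)^2a^2+\tilde L_1(\epsilon)\tfrac{\epsilon}{2}$ (by squaring). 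Substituting these into the Cauchy--Schwarz/triangle/Young argument of \eqref{eq10_2}, together with $\eta_2\tilde L_1(\epsilon)\leq\tfrac14$ from \eqref{cond_2_2_0}, reproduces the smooth-case bound up to one extra additive term, namely $\tilde\Delta_2\leq\tfrac{5\eta_2\tilde L_1(\epsilon)}{4}\|z_1-z_0\|^2-\tfrac14\|z_2-y_1\|^2+\tfrac{3\eta_2\epsilon}{8}$.

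Summing these bounds and substituting into \eqref{theorem_2_eq4} (and using $\|z_1-y_1\|=\|z_1-z_0\|$ to merge with the $\tfrac{\eta_2}{2\eta_1}$ term on the left, which produces the coefficient $\tfrac{5\eta_2\tilde L_1(\epsilon)}{4}-\tfrac{\eta_2}{2\eta_1}$) gives the H\"older analog of \eqref{eq11}. From here the argument is identical to Theorem~\ref{main_theorem}: set $z=x^*$, use $\langle g(x_t),x_t-x^*\rangle\geq f(x_t)-f(x^*)$ and the convexity relation \eqref{label_convex_h} coming from \eqref{output_series_2} to turn the left-hand side into a nonnegatively-weighted sum of $\Psi(x_t)-\Psi(x^*)$, where nonnegativity of the weights $(\tau_t+1)\eta_{t+1}-\tau_{t+1}\eta_{t+2}$ is guaranteed by $\eta_t\leq\tfrac{\tau_{t-2}+1}{\tau_{t-1}}\eta_{t-1}$ in \eqref{cond_2_2}. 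Jensen's inequality with the definition \eqref{def_bar_x_k} of $\bar x_k$ and the telescoping identity $\tsum_{t=1}^{k-1}[(\tau_t+1)\eta_{t+1}-\tau_{t+1}\eta_{t+2}]+(\tau_k+1)\eta_{k+1}=\tsum_{t=2}^{k+1}\eta_t$ then place $(\tsum_{t=2}^{k+1}\eta_t)[\Psi(\bar x_k)-\Psi(x^*)]$ on the left, while the squared distances collect into $\tfrac{1}{2\beta}\|z_0-x^*\|^2+(\tfrac{5\eta_2\tilde L_1(\epsilon)}{4}-\tfrac{\eta_2}{2\eta_1})\|z_1-z_0\|^2$ plus the nonpositive leftovers $-\tfrac{1}{2\beta}\|y_{k+1}-x^*\|^2-\tfrac14\|z_{k+1}-y_k\|^2$, which I discard.

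The last step, and the part needing the most care, is the bookkeeping of the $\epsilon$-errors. Collecting the $\tfrac{3\eta_2\epsilon}{8}$ from $\tilde\Delta_2$ with the $\tsum_{t=3}^{k+1}\tfrac{\eta_t\epsilon}{2}$ carried by \eqref{theorem_2_eq4}, the total error is strictly below $\tsum_{t=2}^{k+1}\tfrac{\eta_t\epsilon}{2}$ because $\tfrac38<\tfrac12$; hence after dividing through by $\tsum_{t=2}^{k+1}\eta_t$ this contribution is at most $\tfrac{\epsilon}{2}$, which is exactly the additive $\tfrac{\epsilon}{2}$ in the claimed bound. The main obstacle is therefore not the (largely mechanical) recursion but verifying that the bespoke definition of $\tilde L_1(\epsilon)$ in \eqref{define_tilde_L} simultaneously recovers the clean coefficient $\tfrac{5\eta_2\tilde L_1(\epsilon)}{4}$ of the smooth analysis and keeps the $t=2$ error strictly under the $\tfrac{\eta_2\epsilon}{2}$ allowance, so that the global error budget closes at $\tfrac{\epsilon}{2}$.
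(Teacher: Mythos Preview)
Your proposal is correct and follows essentially the same route as the paper. In particular, the paper bounds $\tilde\Delta_t$ for $t\ge 3$ exactly as in \eqref{eq10} with $\tilde L_{t-1}(\epsilon)$ in place of $L_{t-1}$, handles $\tilde\Delta_2$ via Cauchy--Schwarz, triangle, and Young's inequality together with the identity $\|g(z_1)-g(z_0)\|^2/\tilde L_1(\epsilon)=\tilde L_1(\epsilon)\|z_1-z_0\|^2+\epsilon/2$ (equivalent to your squared identity) to obtain $\tilde\Delta_2\le\tfrac{5\eta_2\tilde L_1(\epsilon)}{4}\|z_1-z_0\|^2+\tfrac{3\eta_2\epsilon}{8}-\tfrac14\|z_2-y_1\|^2$, and then finishes by the same convexity/telescoping/Jensen argument and the observation $\tfrac{3\eta_2\epsilon}{8}+\tsum_{t=3}^{k+1}\tfrac{\eta_t\epsilon}{2}\le\tsum_{t=2}^{k+1}\tfrac{\eta_t\epsilon}{2}$.
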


\begin{proof}
In this proof, we first upper bound the terms $\tilde \Delta_t$ in Ineq.~\eqref{theorem_2_eq4} of Proposition \ref{prop_2}.
For $t\geq 3$, the bound $\tilde \Delta_t \leq \tfrac{\revision{\beta}{1}}{4}\|z_{t-1}-y_{t-2}\|^2 - \tfrac{\revision{\beta}{1}}{4}\|z_t - y_{t-1}\|^2$  holds for the same reason as Ineq.~\eqref{eq10} with $L_t$ replaced by $\tilde L_t(\epsilon)$. 
For $t = 2$, we have
\begin{align*}
\tilde \Delta_2 
& \overset{(i)}\leq \eta_2 \|g(x_1) - g(x_0)\|\|z_1 - z_2\| - \tfrac{\revision{\beta}{1}}{2}\|z_2 - y_1\|^2\\
&\overset{(ii)}\leq \eta_2 \|g(z_1) - g(z_0)\| \|z_1 - z_0\|+ \eta_2 \|g(z_1) - g(z_0)\| \|z_2 - z_0\| - \tfrac{\revision{\beta}{1}}{2}\|z_2 - z_0\|^2\\
&\overset{(iii)}\leq \tfrac{\eta_2\|g(z_1)-g(z_0)\|^2}{2\tilde L_1(\epsilon)} + \tfrac{\eta_2 \tilde L_1(\epsilon)\|z_1 - z_0\|^2}{2} + \tfrac{\eta_2\|g(z_1)-g(z_0)\|^2}{4\tilde L_1(\epsilon)}  +\eta_2 \tilde L_1(\epsilon)\|z_2 - z_0\|^2 - \tfrac{\revision{\beta}{1}}{2}\| z_2 - z_0\|^2\\
&\overset{(iv)}\leq \tfrac{3\eta_2\|g(z_1)-g(z_0)\|^2}{4\tilde L_1(\epsilon)} + \tfrac{\eta_2 \tilde L_1(\epsilon)\|z_1 - z_0\|^2}{2}  - \tfrac{1}{4}\|z_2 - z_0\|^2\\
&\overset{(v)}\leq \tfrac{5\eta_2 \tilde L_1(\epsilon)\|z_1 - z_0\|^2}{4} + \tfrac{3\eta_2\epsilon}{8} - \tfrac{1}{4}\|z_2 - y_1\|^2,
\end{align*}
where step (i) follows from Cauchy-Schwarz inequality; step (ii) follows from triangle inequality and \revision{}{$x_1 = z_1$, $x_0 = y_0 = y_1=z_0$ due to $\tau_1 = 0$ and $\beta_1 = 0$}; step (iii) follows from Young's inequality; \revision{}{step (iv) follows from $\eta_2 \leq \tfrac{\beta}{4\tilde L_1(\epsilon)}$; step (v) follows from $y_1=z_0$ and the fact that $\tfrac{\|g(z_1)-g(z_0)\|^2}{\tilde L_1(\epsilon)} = \tilde L_1(\epsilon)\|z_1-z_0\|^2 + \tfrac{\epsilon}{2}$ due to the definition of $\tilde L_1(\epsilon)$.} Then, we use the bound for $\tilde \Delta_t$ in \eqref{theorem_2_eq4} and obtain
\begin{align}\label{theorem_2_eq5}
&\tsum_{t=1}^k \eta_{t+1}\left[\tau_t f(x_{t})+ \langle g(x_t), x_t - z\rangle - \tau_t f(x_{t-1}) \revision{}{+ h(z_{t}) - h(z)} \right] + \tfrac{1}{2\beta}\|y_{k+1}-z\|^2 \nn\\
& \leq \tfrac{1}{2\beta}\|y_1 - z\|^2  + \left(\tfrac{5\eta_2L_1(\epsilon)}{4} - \tfrac{\eta_2}{2\eta_1}\right)\|z_1-z_0\|^2 - \tfrac{1}{4}\|z_{k+1}-y_k\|^2 + \tsum_{t=2}^{k+1}\tfrac{\eta_t \epsilon}{2},
\end{align}
Then, we set $z=x^*$. \revision{}{Recall the definition $\Psi(x) := f(x) + h(x)$. By utilizing the relationships \eqref{label_convex_h} and $\langle g(x_t), x_t - x^*\rangle \geq f(x_t) - f(x^*)$ due to the convexity of $f$}, and invoking the conditions \eqref{cond_4} and $\tau_1=0$ in \eqref{cond_3}, we obtain
\begin{align}\label{theorem_2_eq6}
&\tsum_{t=1}^{k-1}[(\tau_{t}+1)\eta_{t+1} - \tau_{t+1} \eta_{t+2}][\revision{f}{\Psi}(x_{t})-\revision{f}{\Psi}(x^*)] + (\tau_{k}+1)\eta_{k+1} [\revision{f}{\Psi}(x_{k})-\revision{f}{\Psi}(x^*)]\nn\\
&\leq  \tfrac{1}{2\beta} \|z_0 - x^*\|^2 + \left(\tfrac{5\eta_2L_1(\epsilon) }{4} - \tfrac{\eta_2}{2\eta_1} \right)\|z_1 - z_0\|^2 + \tsum_{t=2}^{k+1}\tfrac{\eta_t \epsilon}{2}.
\end{align}
Then, by using the definition of $\bar x_k$ in \eqref{def_bar_x_k}, we arrive at
\begin{align*}
    \revision{f}{\Psi}(\bar x_{k}) - \revision{f}{\Psi}(x^*) \leq  \tfrac{1}{\sum_{t=2}^{k+1}\eta_t }\cdot \left[\tfrac{1}{2\beta} \|z_0 - x^*\|^2 + \left(\tfrac{5\eta_2 L_1(\epsilon)}{4} - \tfrac{\eta_2}{2\eta_1} \right)\|z_1 - z_0\|^2 \right] + \tfrac{\epsilon}{2},
\end{align*}
which completes the proof.
\end{proof}
\vgap

\revision{}{It should be noted that we do not provide the convergence guarantee for the last iterate $x_k$ since the error introduced by the input accuracy $\epsilon$ can blow up when $\sum_{t=2}^{k+1}\eta_t$ is much larger than $(\tau_k+1)\eta_{k+1}$.} With the help of Theorem~\ref{theorem_universal_optimal}, we establish in the following corollary the uniformly optimal convergence rate for AC-FGM.
\begin{corollary}\label{corollary_uniform_optimal}
    Assume the algorithmic parameters of AC-FGM satisfy the conditions in Corollary~\ref{main_corollary_1} with $L_t$ replaced by $\tilde L_t(\epsilon)$. \revision{$\eta_1 \in [\tfrac{\beta}{4(1-\beta)\tilde L_1(\epsilon)}, \tfrac{1}{3 \tilde L_1(\epsilon)}]$}{Suppose the initial stepsize $\eta_1$ satisfies $\eta_1^{-1} \leq \mathcal{O}\big(\big(\tfrac{1}{\epsilon}\big)^{\frac{1-\nu}{1+\nu}} \cdot L_\nu^{\frac{2}{1+\nu}}\big)$ and $\eta_1 \leq \tfrac{2}{5\tilde L_1(\epsilon)}$.} Then we have
    \begin{align*}
\revision{f}{\Psi}(\bar x_{k}) - \revision{f}{\Psi}(x^*) \leq \mathcal{O}\left( \left(\tfrac{L_\nu^2}{\epsilon^{1-\nu} k ^{1+3\nu}}\right)^{\frac{1}{1+\nu}} \cdot \|z_0 - x^*\|^2\right) + \tfrac{\epsilon}{2}.
\end{align*}
The same argument holds for taking the stepsize rule in Corollary~\ref{main_corollary_2} with $\alpha \in (0,1]$.
\end{corollary}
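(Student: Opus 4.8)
The plan is to reduce the corollary to Theorem~\ref{theorem_universal_optimal} by two estimates: (i) a uniform upper bound on the local surrogates $\tilde L_t(\epsilon)$ in terms of $L_\nu$, $\nu$, $\epsilon$ and the iteration count, and (ii) a lower bound on $\sum_{t=2}^{k+1}\eta_t$ via the stepsize recursion. First I would observe that the hypothesis $\eta_1 \leq \tfrac{2}{5\tilde L_1(\epsilon)}$ forces the coefficient $\tfrac{5\eta_2\tilde L_1(\epsilon)}{4} - \tfrac{\eta_2}{2\eta_1} = \tfrac{\eta_2}{2}\big(\tfrac{5\tilde L_1(\epsilon)}{2} - \tfrac{1}{\eta_1}\big)$ to be nonpositive, so the $\|z_1-z_0\|^2$ term in Theorem~\ref{theorem_universal_optimal} can be dropped, leaving $\Psi(\bar x_k)-\Psi(x^*) \leq \tfrac{\|z_0-x^*\|^2}{2\beta \sum_{t=2}^{k+1}\eta_t} + \tfrac{\epsilon}{2}$.

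The core step is to bound $\hat L_k = \max\{\tfrac{1}{4(1-\beta)\eta_1}, \tilde L_1(\epsilon),\dots,\tilde L_k(\epsilon)\}$. Writing $\bar L(\delta) := \big[\tfrac{1-\nu}{(1+\nu)\delta}\big]^{\frac{1-\nu}{1+\nu}} L_\nu^{\frac{2}{1+\nu}}$, for $t\geq 2$ I would combine the defining identity \eqref{theorem_2_eq1} for $\tilde L_t(\epsilon)$ with Ineq.~\eqref{eq:holder_3} of Lemma~\ref{lemma:holder_1} applied at $x=x_t$, $y=x_{t-1}$ with tolerance $\delta = \epsilon/\tau_t$; the term $\epsilon/\tau_t$ in the denominator of \eqref{define_tilde_L} is precisely engineered so that these two relations yield $\tilde L_t(\epsilon) \leq \bar L(\epsilon/\tau_t)$ (the degenerate case $g(x_t)=g(x_{t-1})$ giving $\tilde L_t(\epsilon)=0$ trivially). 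For $t=1$, the algebraic identity $\tfrac{\|g(x_1)-g(x_0)\|^2}{\tilde L_1(\epsilon)} = \tilde L_1(\epsilon)\|x_1-x_0\|^2 + \tfrac{\epsilon}{2}$ together with Ineq.~\eqref{eq:holder_4} taken at $\delta=\epsilon/4$ and the strict monotonicity of $s\mapsto \tfrac{\|g(x_1)-g(x_0)\|^2}{s} - s\|x_1-x_0\|^2$ gives $\tilde L_1(\epsilon) \leq \bar L(\epsilon/4)$.

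Since $\bar L(\delta)$ is decreasing in $\delta$ and $\tau_t = t/2 \leq k/2$, each $\tilde L_t(\epsilon)$ is at most $\bar L(2\epsilon/k)$, so after absorbing the $\tfrac{1}{4(1-\beta)\eta_1}$ term using $\eta_1^{-1} \leq \mathcal{O}\big((1/\epsilon)^{\frac{1-\nu}{1+\nu}} L_\nu^{\frac{2}{1+\nu}}\big)$ I obtain $\hat L_k \leq \mathcal{O}\big((k/\epsilon)^{\frac{1-\nu}{1+\nu}} L_\nu^{\frac{2}{1+\nu}}\big)$. Next I would invoke the stepsize lower bound $\eta_t \geq \tfrac{t}{12\hat L_{t-1}} \geq \tfrac{t}{12\hat L_k}$ from \eqref{eta_lower_bound} to get $\sum_{t=2}^{k+1}\eta_t \geq \tfrac{1}{12\hat L_k}\sum_{t=2}^{k+1} t = \Omega(k^2/\hat L_k)$, whence $\Psi(\bar x_k)-\Psi(x^*) \leq \mathcal{O}\big(\hat L_k/k^2\big)\|z_0-x^*\|^2 + \tfrac{\epsilon}{2}$. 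Substituting the bound on $\hat L_k$ and simplifying the exponent $\tfrac{1-\nu}{1+\nu} - 2 = -\tfrac{1+3\nu}{1+\nu}$ yields exactly the claimed rate. The argument for the Corollary~\ref{main_corollary_2} stepsizes with $\alpha\in(0,1]$ is identical after replacing the lower bound with $\eta_t \geq \tfrac{3+\alpha(t-3)}{12\hat L_{t-1}}$, which still sums to $\Omega(k^2/\hat L_k)$.

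The main obstacle I anticipate is the bound on the local estimates: one must verify that the per-iteration tolerance $\delta=\epsilon/\tau_t$ baked into \eqref{define_tilde_L} matches the tolerance appearing in Lemma~\ref{lemma:holder_1}, and that letting $\tau_t$ grow as large as $k/2$ — which is exactly what forces $\hat L_k$ to carry the extra factor $(k/\epsilon)^{\frac{1-\nu}{1+\nu}}$ — still leaves the product $\hat L_k/k^2$ decaying at the optimal weakly-smooth rate. The accumulated error from the $\delta$-tolerances has already been collapsed to the single additive $\tfrac{\epsilon}{2}$ inside Theorem~\ref{theorem_universal_optimal}, since $\tfrac{1}{\sum_t\eta_t}\sum_t \tfrac{\eta_t\epsilon}{2} = \tfrac{\epsilon}{2}$, so no further bookkeeping is needed there; the delicate part is purely the $\tau_t$-versus-$\epsilon$ tradeoff inside the smoothness surrogate.
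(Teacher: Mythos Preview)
Your proposal is correct and follows essentially the same route as the paper: bound $\tilde L_1(\epsilon)$ via \eqref{eq:holder_4} with $\delta=\epsilon/4$ and $\tilde L_t(\epsilon)$ for $t\ge 2$ via \eqref{eq:holder_3} with $\delta=\epsilon/\tau_t$, then feed the resulting $\hat L_k = \mathcal{O}\big((k/\epsilon)^{\frac{1-\nu}{1+\nu}}L_\nu^{\frac{2}{1+\nu}}\big)$ through the stepsize lower bound \eqref{eta_lower_bound} and Theorem~\ref{theorem_universal_optimal}. Your write-up is in fact a bit more explicit than the paper's in justifying $\tilde L_1(\epsilon)\le \bar L(\epsilon/4)$ via monotonicity and in summing $\sum_t \eta_t$, but the argument is the same.
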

\begin{proof}
First, due to the definition of $\tilde L_1(\epsilon)$, we have
\begin{align*}
\tfrac{\|g(x_1) - g(x_0)\|^2}{2\tilde L_1(\epsilon)} - \tfrac{\tilde L_1(\epsilon) \|x_1 - x_0\|^2}{2} \leq \tfrac{\epsilon}{4}.
\end{align*}
Together with \revision{Lemma~\ref{lemma:holder_2}}{Ineq.~\eqref{eq:holder_4} in Lemma~\ref{lemma:holder_1}}, it is easy to see that 
\begin{align*}
\tilde L_1(\epsilon) = \mathcal{O}\left(\left(\tfrac{1}{\epsilon}\right)^{\frac{1-\nu}{1+\nu}} \cdot L_\nu^{\frac{2}{1+\nu}}\right).
\end{align*}
Similarly, by combining the definition of $\tilde L_t(\epsilon), ~t\geq 2$ and \revision{}{Ineq.~\eqref{eq:holder_3} in Lemma~\ref{lemma:holder_1}}, we have that 
\begin{align*}
\tilde L_t(\epsilon) = \mathcal{O}\left(\left(\tfrac{\tau_t}{\epsilon}\right)^{\frac{1-\nu}{1+\nu}} \cdot L_\nu^{\frac{2}{1+\nu}}\right)= \mathcal{O}\left(\left(\tfrac{t}{\epsilon}\right)^{\frac{1-\nu}{1+\nu}} \cdot L_\nu^{\frac{2}{1+\nu}}\right), ~\forall t \geq 2.
\end{align*}
Therefore, we obtain that $\tilde L_t(\epsilon)= \mathcal{O}\left(\left(\tfrac{t}{\epsilon}\right)^{\frac{1-\nu}{1+\nu}} \cdot L_\nu^{\frac{2}{1+\nu}}\right)$. \revision{Recalling Theorem~\ref{theorem_universal_optimal}}{Similar to \eqref{eta_lower_bound} in Corollary~\ref{main_corollary_1}}, we have that
\begin{align*}
\revision{}{\eta_t \geq \tfrac{t}{12 \hat L_{t-1}(\epsilon)}~~\text{where} ~~\hat L_{t}(\epsilon) := \max\{\tfrac{\revision{\beta}{1}}{4(1-\beta)\eta_1}, \tilde L_1(\epsilon),...,\tilde L_t(\epsilon)\}.}
\end{align*}
\revision{when $\eta_1\in [\tfrac{\beta}{4(1-\beta)\tilde L_1(\epsilon)}, \tfrac{1}{3 \tilde L_1(\epsilon)}]$, }{Thus by recalling Theorem~\ref{theorem_universal_optimal} and the condition that $\eta_1^{-1} \leq \mathcal{O}\big(\big(\tfrac{1}{\epsilon}\big)^{\frac{1-\nu}{1+\nu}} \cdot L_\nu^{\frac{2}{1+\nu}}\big)$ and $\eta_1 \leq \tfrac{2}{5\tilde L_1(\epsilon)}$, we have}
\begin{align*}
\revision{f}{\Psi}(\bar x_{k}) - \revision{f}{\Psi}(x^*) &\leq \mathcal{O}\left(\tfrac{\max\{\revision{}{\eta_1^{-1}}, \tilde L_1(\epsilon), ..., \tilde L_k(\epsilon)\}}{k^2} \|z_0-x^*\|^2\right) + \tfrac{\epsilon}{2}\\
& \leq \mathcal{O}\left(\tfrac{1}{k^2} \cdot \left(\tfrac{k}{\epsilon}\right)^{\frac{1-\nu}{1+\nu}} \cdot L_\nu^{\frac{2}{1+\nu}}\cdot \|z_0-x^*\|^2\right) + \tfrac{\epsilon}{2}\\
&=\mathcal{O}\left( \left(\tfrac{L_\nu^2}{\epsilon^{1-\nu} k ^{1+3\nu}}\right)^{\frac{1}{1+\nu}} \cdot \|z_0 - x^*\|^2\right) + \tfrac{\epsilon}{2},
\end{align*}
which completes the proof.
\end{proof}
\vgap

\revision{}{Notice that the condition $\eta_1^{-1} \leq \mathcal{O}\big(\big(\tfrac{1}{\epsilon}\big)^{\frac{1-\nu}{1+\nu}} \cdot L_\nu^{\frac{2}{1+\nu}}\big)$ can be easily ensured by finding a $z_{-1}\in X$ and choosing $\eta_1$ in the order of $\tilde L_0^{-1}(\epsilon)$ where 
\begin{align}\label{def_tilde_L_0}
\tilde L_0(\epsilon):=\tfrac{\sqrt{\|z_{-1} - z_0\|^2\|g(z_{-1})-g(z_0)\|^2 + (\epsilon/4)^2} - \epsilon/4}{\|z_{-1} - z_0\|^2}.
\end{align}
To ensure the condition $\eta_1 \leq \tfrac{2}{5\tilde L_1(\epsilon)}$ at the same time, one can perform a similar line search procedure for the first iteration as in Section~\ref{sec_smooth} after Corollary~\ref{main_corollary_1}.
}

As a consequence of Corollary~\ref{corollary_uniform_optimal}, for any $\nu \in [0,1]$, the AC-FGM algorithm requires at most 
\begin{align*}
    \mathcal{O}\left( \left( \tfrac{L_\nu \|z_0-x^*\|^{1+\nu}}{\epsilon}\right)^{\frac{2}{1+3\nu}}\right)
\end{align*}
iterations to obtain an $\epsilon$-optimal solution for any convex objective with H\"{o}lder continuous gradients, and this complexity is optimal, supported by the general complexity theory of convex optimization in \cite{nemirovski1983problem}. \revision{}{This uniformly optimal guarantee is obtained without any prior knowledge of $L_\nu$ or $\nu$, but with a pre-specified accuracy $\epsilon$.}

\section{Numerical experiments}\label{sec_numerical}
In this section, we report a few encouraging experimental results that can demonstrate the advantages of the AC-FGM algorithm over several other well-known parameter-free methods for convex \revision{}{(composite)} optimization. Specifically, \revision{}{for smooth problems,} we compare AC-FGM against Nesterov's fast gradient method (NS-FGM) proposed in \cite{nesterov2015universal}, and the line search free adaptive gradient descent (AdGD) proposed in \cite{malitsky2023adaptive}. Meanwhile, we take Nesterov's accelerated gradient descent method (NS-AGD) \cite{nesterov1983method} as a benchmark. 
\revision{}{In the nonsmooth problem, we compare AC-FGM against NS-FGM and Nesterov's primal gradient method (NS-PGM), also proposed in \cite{nesterov2015universal}.} 
\revision{}{In all sets of experiments, we report the convergence results in terms of iterations (in plots), CPU time, and oracle calls (in tables), where the latter means the number of queries of the first-order oracle, which returns $f(x)$ and $g(x) \in \partial f(x)$. Specifically, each iteration of AC-FGM, AdGD, and NS-AGD requires one oracle call. The oracle calls used in each iteration of NS-PGD equals the number of line search steps within the iteration, while the one required by NS-FGM equals two times the number of line search steps within the iteration. Moreover, in the instances where the optimal objective $\Psi(x^*)$ is unknown, we run the algorithms long enough until at least two methods converge to a high enough accuracy, e.g., $10^{-10}$. Then we take the lowest objective values among all solvers as the approximation of $\Psi(x^*)$. }

Notice that in our AC-FGM method, one can choose any $\alpha \in [0, 1]$ in Corollary~\ref{main_corollary_2}, \revision{}{allowing different levels of adaptivity.}
\revision{To distinguish the performance between AC-FGM and other methods}{To demonstrate this}, we implement AC-FGM with three different choices of $\alpha$ in $[0, 0.5]$, namely AC-FGM:0.5, AC-FGM:0.1, and AC-FGM:0.0, in all sets of experiments. \revision{}{For simplicity, we set $\beta = 1 - \tfrac{\sqrt{6}}{3}$ as suggested in Corollary~\ref{main_corollary_2} in the reported experiments, but more refined choices of $\beta$ may lead to superior convergence in practice. For the initial stepsize $\eta_1$, we first calculate $L_0$ defined in \eqref{def_L_0} and set $\eta_1:=\tfrac{2}{5 L_0}$, thus our implementation is fully line search free. When implementing the NS-FGM method, we take the line search constant $\gamma=2$ and take $L_0$ as the initial guess for line search. When implementing the AdGD method, we use $\gamma=1.5$ and $L_0$ for searching the initial stepsize. Our source code for reproducing the experiments is publicly available.\footnote{\url{https://github.com/tli432/AC-FGM-Implementation}}}

\revision{}{
In Section~\ref{subsec_num_2}, we report the numerical results for solving the classic quadratic programming problems and lease square regression with Lasso regularization. 
Section~\ref{sec_num_square_root_Lasso} presents the experiments for solving the nonsmooth square root Lasso problems with uniform/universal methods.
In Section~\ref{subsec_num_3}, we report the experiments for the sparse logistic regression with $\ell_1$-regularization. Finally, in Section~\ref{sec_ablation_analysis}, we provide ablation analysis to compare the performance of AC-FGM with and without line search in the first iteration, showing that AC-FGM is not sensitive to the initial line search.}

\subsection{Quadratic programming and Lasso regularization}\label{subsec_num_2}
In this subsection, we first investigate the performance of AC-FGM on the classical quadratic programming (QP) problem
\begin{align}\label{QP_prob}
\min_{\revision{\|x\|\leq1}{x \in \bbr^n}} f(x):= \tfrac{1}{m}\|Ax-b\|_2^2,
\end{align}
where $A \in \bbr^{m\times n}$ and $b \in \bbr^m$. Clearly, this formulation covers the well-known least square linear regression when $A$ is the design matrix and $b$ is the response vector.

\revision{}{In the first set of experiments, we test the performance of the algorithms on both randomly generated instances and real-world datasets.} For the random instances, we first randomly choose an optimal solution $x^*\in \bbr^n$ in the unit Euclidean ball. Then we generate a matrix $A \in \bbr^{m\times n}$, where each entry is uniformly distributed in $[0,1]$. At last, we set $b = A x^*$, thus $f(x^*) = 0$. \revision{}{Clearly, problem \eqref{QP_prob} reduces to solving the linear system $A x = b$. 
For the real-world datasets, we consider two instances from LIBSVM with relatively large $m$ and small $n$, e.g., \emph{bodyfat (m=252, n=14)} and \emph{cadata (m=20640, n=8)}. Thus there is no need for feature selection. 
We report the convergence results in terms of iterations in Figure~\ref{fig_random_QP} and in terms of CPU time and oracle calls in Table \ref{tableQP}. Notice that the average CPU time of AC-FGM in the table is calculated as the average of AC-FGM with the three different choices of $\alpha$. }

\begin{figure}[h]
	\centering
	\begin{minipage}[t]{0.45\linewidth}
		\centering
		\includegraphics[width=5cm]{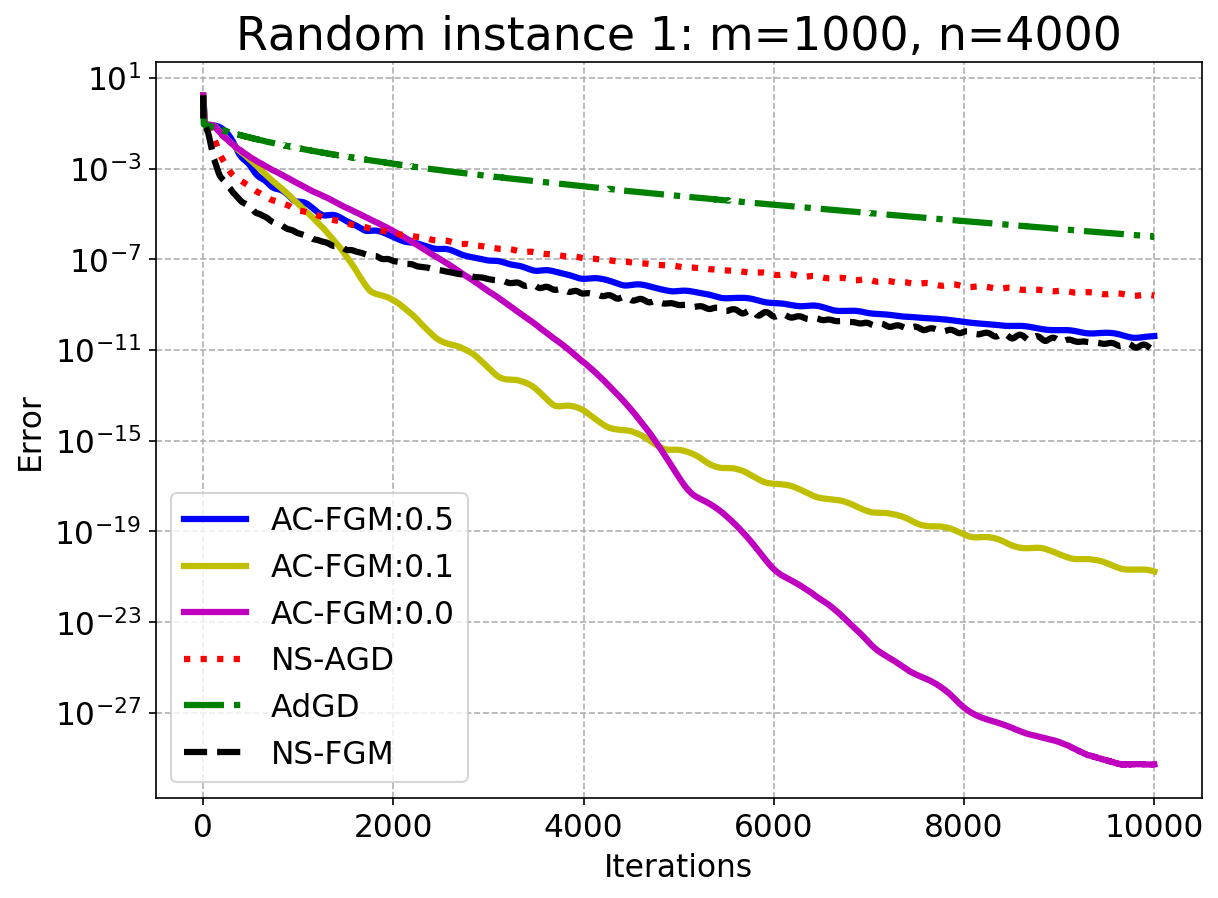}
	\end{minipage}
         \begin{minipage}[t]{0.45\linewidth}
		\centering
		\includegraphics[width=5cm]{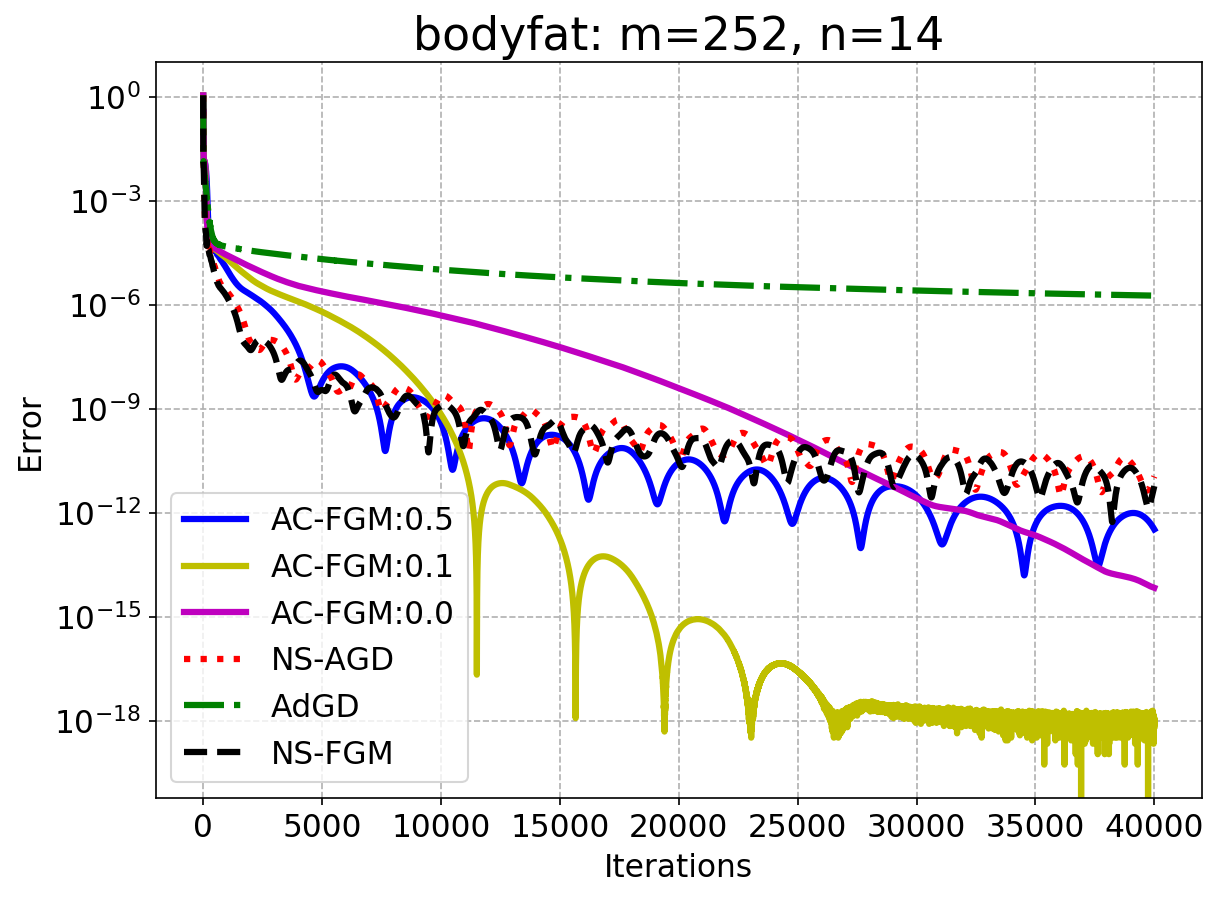}
	\end{minipage}
 \begin{minipage}[t]{0.45\linewidth}
		\centering
		\includegraphics[width=5cm]{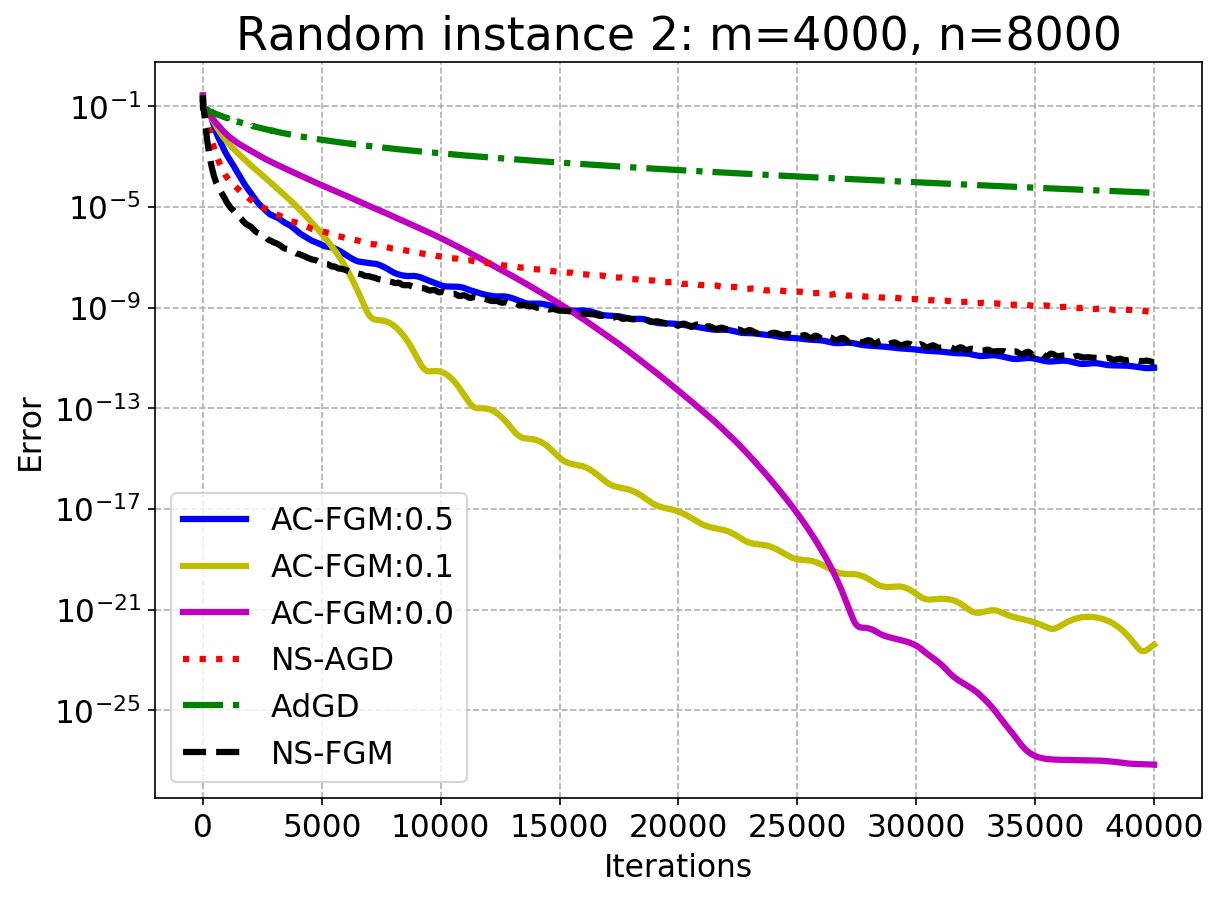}
	\end{minipage}
	\begin{minipage}[t]{0.45\linewidth}
		\centering
		\includegraphics[width=5cm]{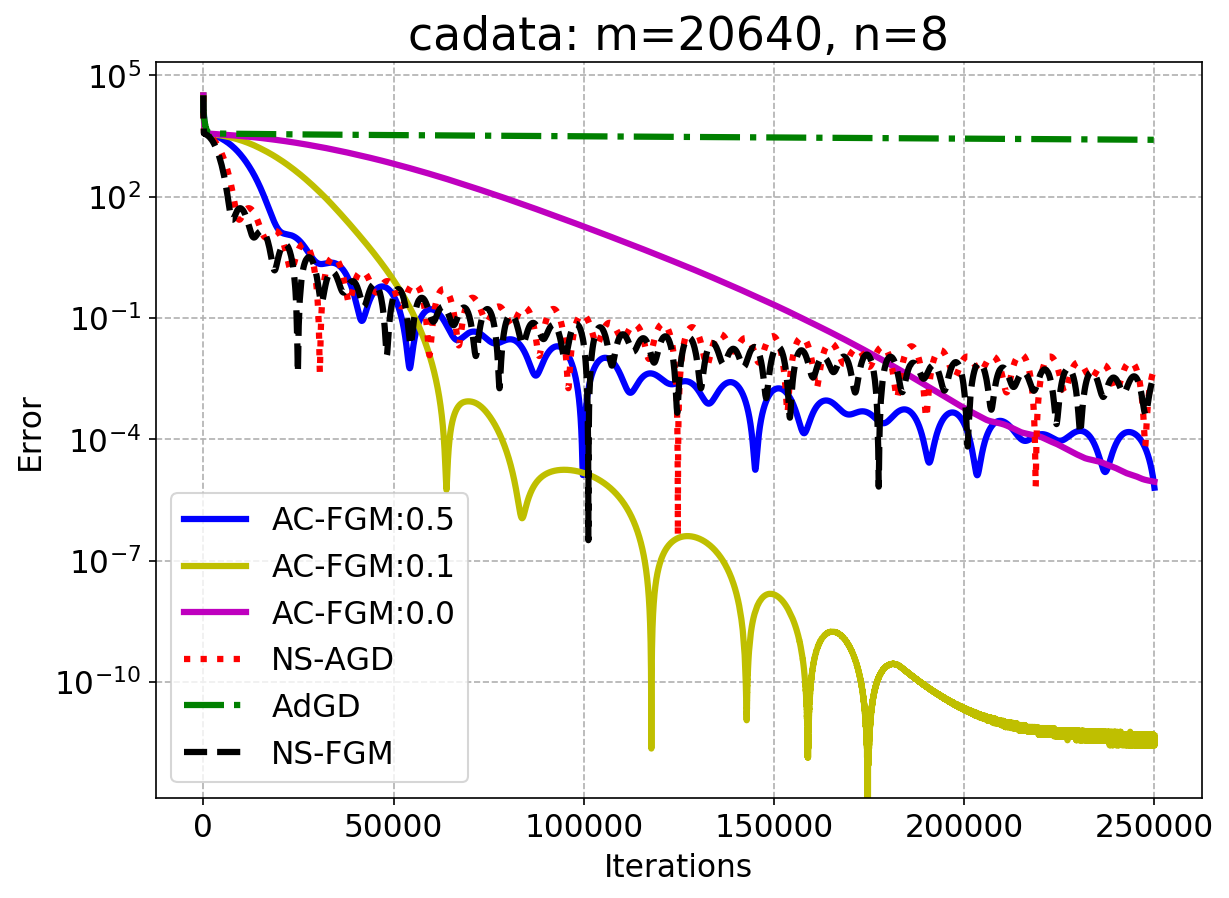}
	\end{minipage}
	\caption{Quadratic programming~\eqref{QP_prob}: Comparison between AC-FGM, NS-AGD, NS-FGM, and AdGD in terms of the number of iterations for solving randomly generated instances (column 1) and datasets \emph{bodyfat} and \emph{cadata} (column 2).}
	\label{fig_random_QP}
\end{figure}

\begin{table}[h]\scriptsize  
	\centering
 \renewcommand\arraystretch{1.3}
 \tabcolsep=0.15cm
	\begin{tabular}{|c|c|c|ccc|ccc|}
 \hline
 \multirow{3}*{Instances} & \multirow{3}*{m} & \multirow{3}*{n}&\multicolumn{3}{c|}{\multirow{2}*{Avg CPU time (per 1000 iterations)}}& \multicolumn{3}{c|}{\multirow{2}*{Avg \# oracle calls (per iteration)}} \\
 & ~ & ~ & ~ & ~ & ~& ~ &~ & ~\\
	& ~ &	~ & AC-FGM & AdGD & NS-FGM & AC-FGM & AdGD & NS-FGM\\\hline
  Random 1& 1000 & 4000 & 11.45 & 11.68 & 46.68& 1 &1 & 4.0\\
  Random 2& 4000 & 8000& 57.75 & 56.30 & 250.37& 1 &1 & 4.0\\
  bodyfat & 252 & 14 & 0.047 & 0.046 & 0.110 & 1 &1 & 4.0\\
  cadata& 20640 & 8 &  0.90 & 0.90  & 3.29 & 1 & 1 & 4.0
  \\\hline
	\end{tabular}
	\caption{Quadratic programming~\eqref{QP_prob}: Comparison of the averaged CPU time in [sec] and averaged oracle calls per iteration.}\label{tableQP}
\end{table}

\revision{}{
We make the following observations from the results in Figure~\ref{fig_random_QP} and Table~\ref{tableQP}. First, the accelerated approaches, AC-FGM and NS-FGM, significantly outperform the non-accelerated AdGD method in all 4 instances. Second, the convergence behavior of NS-FGM closely follows the NS-AGD method, while the AC-FGM with $\alpha=0.1$ largely outperforms the other two accelerated methods, especially in the high accuracy regime. In terms of the computational cost, NS-FGM takes around 4 oracle calls per iteration and the average CPU time is also roughly 4 times of AC-FGM and AdGD.}

\revision{}{
Next, we consider the well-known composite optimization problem, namely quadratic programming (QP)  with $\ell_1$-regularization:
\begin{align}\label{QP_prob_with_lasso}
\min_{\revision{\|x\|\leq1}{x \in \bbr^n}} f(x):= \tfrac{1}{m}\|Ax-b\|_2^2 \revision{}{+ \lambda \|x\|_1},
\end{align}
where $A \in \bbr^{m\times n}$ and $b \in \bbr^m$. This problem is also known as the ``Lasso'' regression in statistical learning, which is widely used for variable selection and model regularization. 
}
\revision{}{We test the performance of the algorithms for solving \eqref{QP_prob_with_lasso} on 2 large-scale datasets in LIBSVM, e.g., \emph{gisette (m=6000, n=5000)}, and \emph{rcv1.binary (m=20242, n=47236)}. We choose the penalty parameter $\lambda = \tfrac{c}{m} \|A^\top b \|_\infty$ with $c=0.01$ and $0.001$ respectively, suggested by \cite{moreau2022benchopt}. The experiment results are reported in Figure~\ref{fig_QP_libsvm_1} and Table~\ref{tableQP_lasso}.}

\begin{figure}[h]
	\centering
	\begin{minipage}[t]{0.45\linewidth}
		\centering
		\includegraphics[width=5cm]{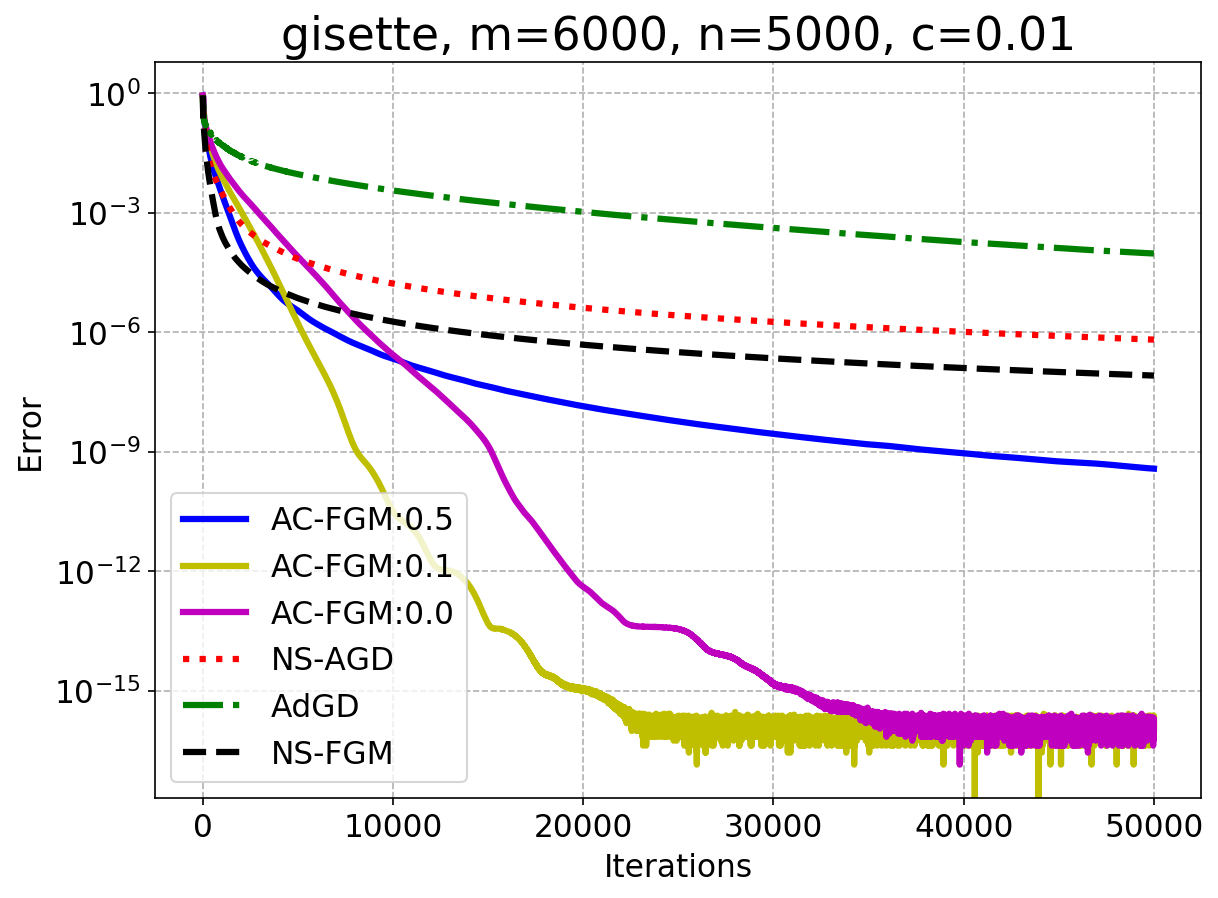}
	\end{minipage}
 \begin{minipage}[t]{0.45\linewidth}
		\centering
		\includegraphics[width=5cm]{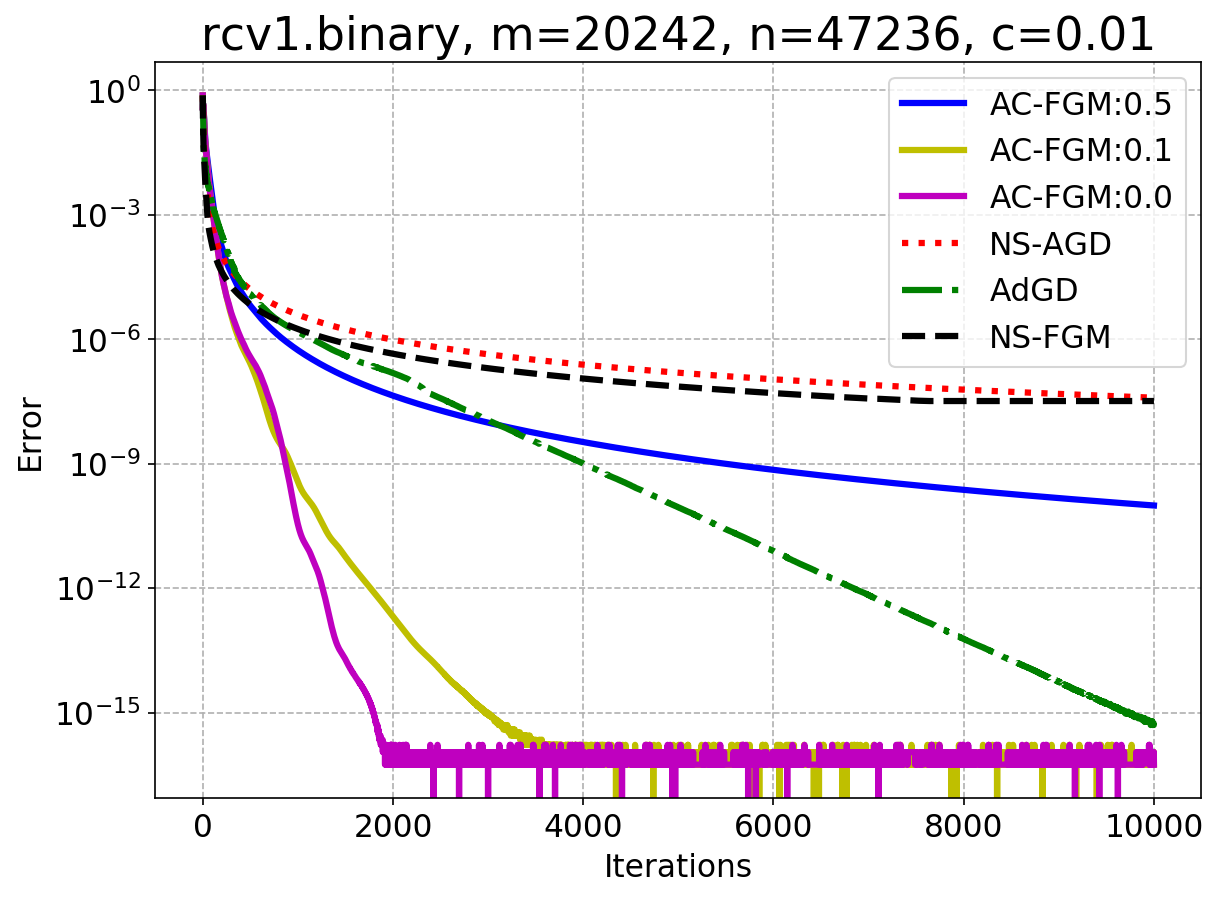}
	\end{minipage}
	\begin{minipage}[t]{0.45\linewidth}
		\centering
		\includegraphics[width=5cm]{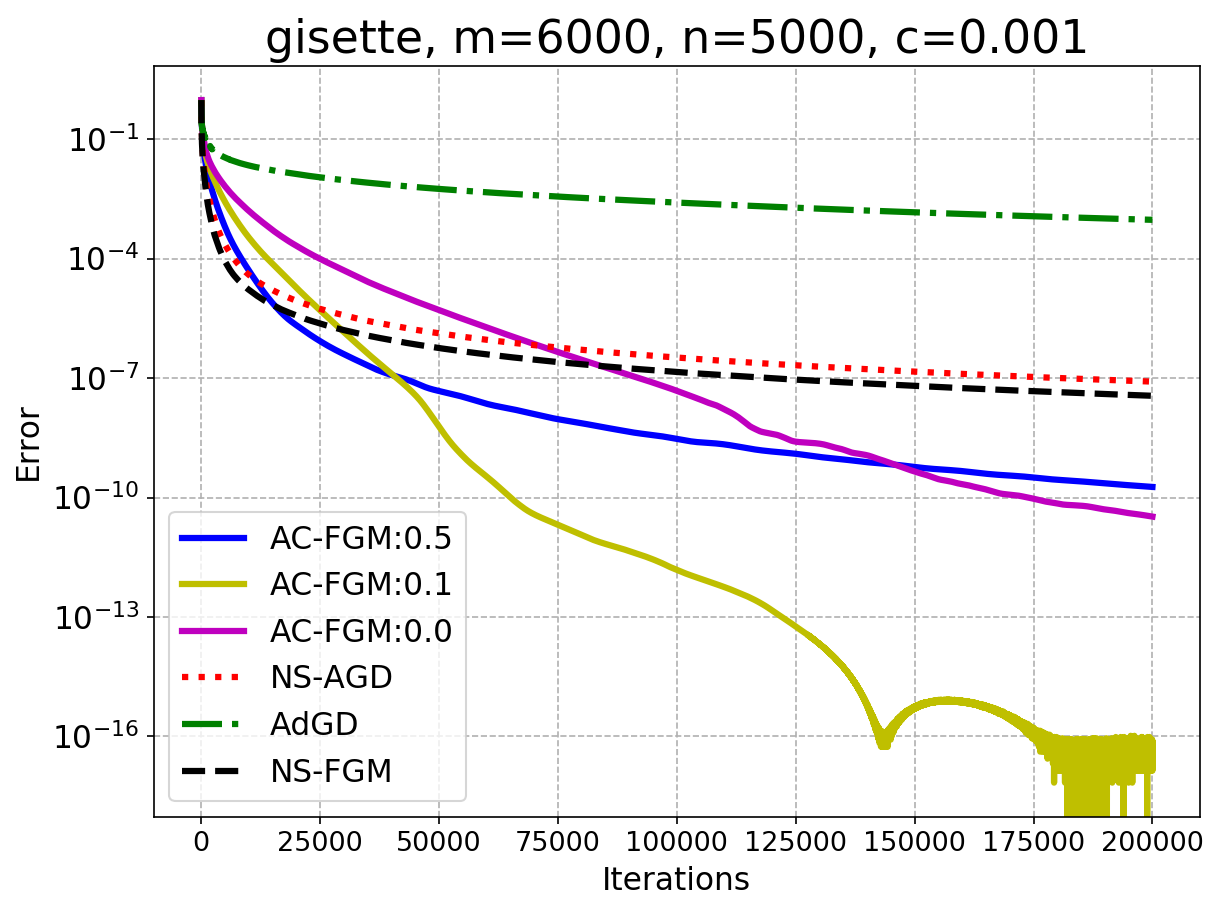}
	\end{minipage}
 \begin{minipage}[t]{0.45\linewidth}
		\centering
		\includegraphics[width=5cm]{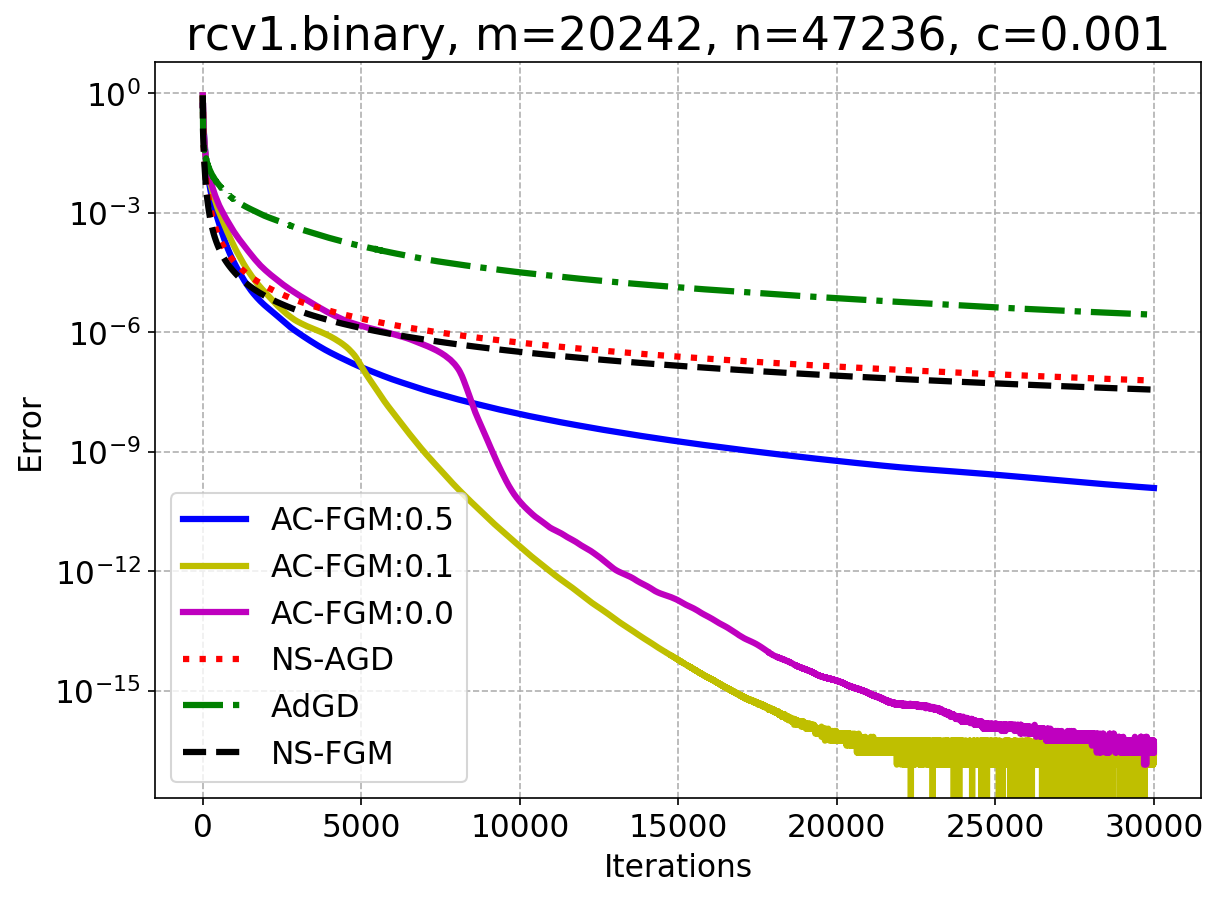}
	\end{minipage}
	\caption{Lasso \eqref{QP_prob_with_lasso}. Comparison between AC-FGM, NS-AGD, NS-FGM, and AdGD in terms of the number of iterations for datasets \emph{gisette} (column 1) and \emph{rcv1.binary} (column 2). Row 1 takes $\lambda = \tfrac{0.01}{m} \|A^\top b \|_\infty$ and Row 2 takes $\lambda = \tfrac{0.001}{m} \|A^\top b \|_\infty$.}
	\label{fig_QP_libsvm_1}
\end{figure}

\begin{table}[h]\scriptsize  
	\centering
 \renewcommand\arraystretch{1.3}
 \tabcolsep=0.15cm
	\begin{tabular}{|c|c|c|ccc|ccc|}
 \hline
 \multirow{3}*{Instances} & \multirow{3}*{m} & \multirow{3}*{n}&\multicolumn{3}{c|}{\multirow{2}*{Avg CPU time (per 1000 iterations)}}& \multicolumn{3}{c|}{\multirow{2}*{Avg \# oracle calls (per iteration)}} \\
 & ~ & ~ & ~ & ~ & ~& ~ &~ & ~\\
	& ~ &	~ & AC-FGM & AdGD & NS-FGM & AC-FGM & AdGD & NS-FGM\\\hline
  gisette ($c=0.01$)& 6000 & 5000 & 56.61 & 56.11 & 213.62 & 1 &1 & 4.0\\
  gisette ($c=0.001$)& 6000 & 5000&  57.80 & 62.30 & 199.74 & 1 &1 & 4.0\\
  rcv1.binary ($c=0.01$)  & 20242 & 47236 & 19.63 & 22.38 & 63.33 & 1 &1 & 4.0\\
  rcv1.binary ($c=0.001$)& 20242 & 47236 &  21.77 & 19.89  & 59.93 & 1 & 1 & 4.0
  \\\hline
	\end{tabular}
	\caption{Lasso~\eqref{QP_prob_with_lasso}: Comparison of the averaged CPU time in [sec] and averaged oracle calls per iteration.}\label{tableQP_lasso}
\end{table}

\revision{}{From Figure~\ref{fig_QP_libsvm_1}, we observe that AC-FGM methods with a smaller choice of $\alpha$, like $0.1$ and $0.0$, exhibit faster convergence than $\alpha=0.5$ in most of the experiments. Meanwhile, compared with NS-AGD and NS-FGM, our AC-FGM with smaller $\alpha$ can take more advantage of the larger penalty parameter $\lambda$ to achieve faster convergence. }

\subsection{Square root Lasso}\label{sec_num_square_root_Lasso}
In this subsection, we consider another well-known problem arisen from statistical learning, known as ``square root Lasso'' \cite{belloni2011square}. 
\begin{align}\label{square_root_lasso_prob}
\min_{\revision{\|x\|\leq1}{x \in \bbr^n}} f(x):= \tfrac{1}{\sqrt{m}}\|Ax-b\|_2 \revision{}{+ \lambda \|x\|_1}.
\end{align}
The theoretical advantage of the square root Lasso problem over the ``least square Lasso'' in \eqref{QP_prob_with_lasso} lies in the more robust choice of the penalty parameter and fewer assumptions on the model; see detailed discussion in \cite{belloni2011square}. Notice that this 
problem is nonsmooth, so we consider solving it with the following uniform/universal methods, i.e., AC-FGM, NS-PGM, and NS-FGM.

We test the performance of the algorithms on two real-word datasets from LIBSVM, i.e., \emph{gisette (m=6000, n=5000)}, and \emph{YearPredictionMSD.test (m=51630, n=90)}. 
The theoretical guarantees in \cite{belloni2011square} suggest the choice of penalty parameter $\lambda = c\cdot m^{-1/2}\Phi^{-1}(1-0.01/n)$, where $\Phi$ is the CDF of the standard normal distribution.
For all datasets, we report the algorithm performances under the choices $c=10$ and $100$, respectively. Moreover, we set $\epsilon=10^{-8}$ when implementing all the algorithms. The results are reported in Figure~\ref{fig_SRL_1} and Table~\ref{table_square_root_lasso}.

\begin{figure}[h]
	\centering
	\begin{minipage}[t]{0.45\linewidth}
		\centering
		\includegraphics[width=5cm]{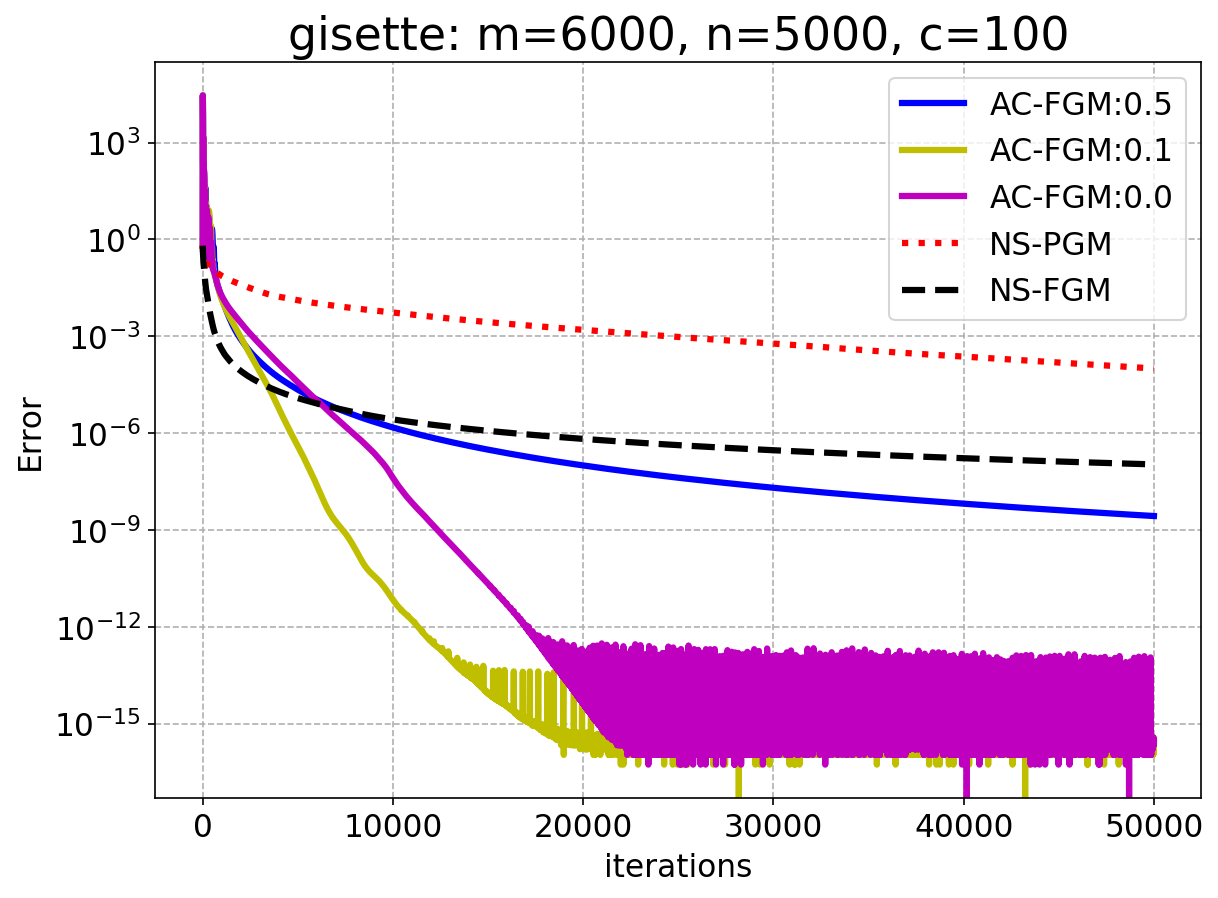}
	\end{minipage}
	\begin{minipage}[t]{0.45\linewidth}
		\centering
		\includegraphics[width=5.1cm]{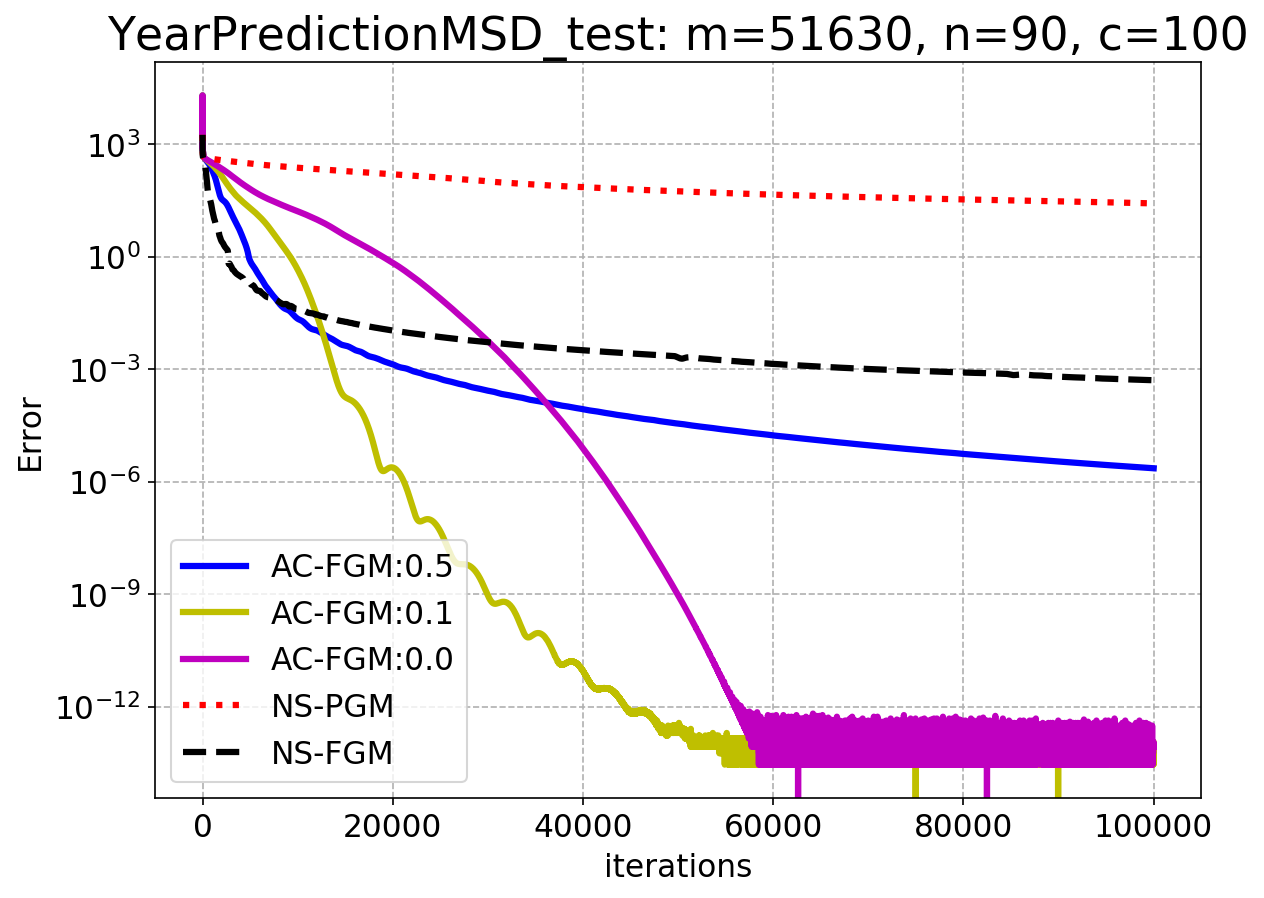}
	\end{minipage}
	\begin{minipage}[t]{0.45\linewidth}
		\centering
		\includegraphics[width=5cm]{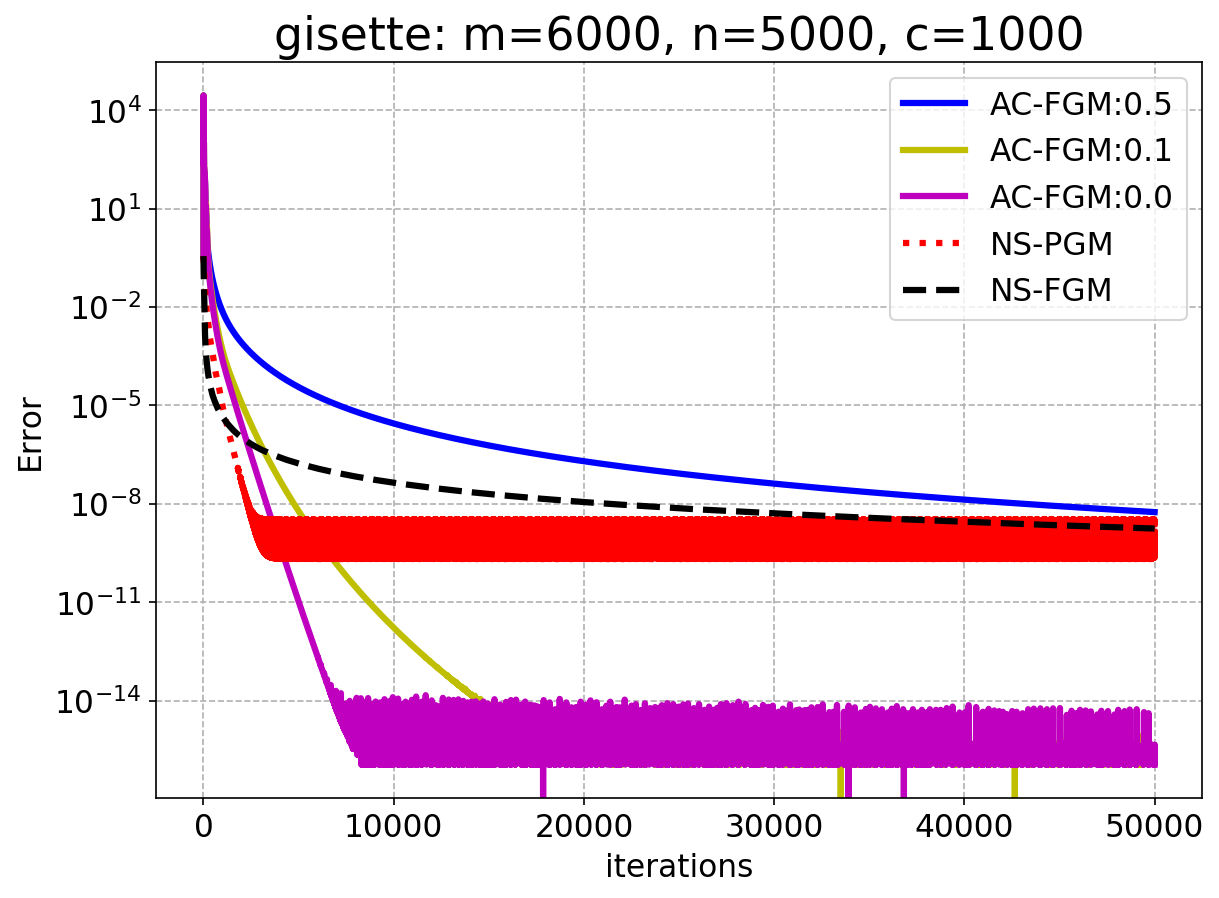}
	\end{minipage}
	\begin{minipage}[t]{0.45\linewidth}
		\centering
		\includegraphics[width=5.2cm]{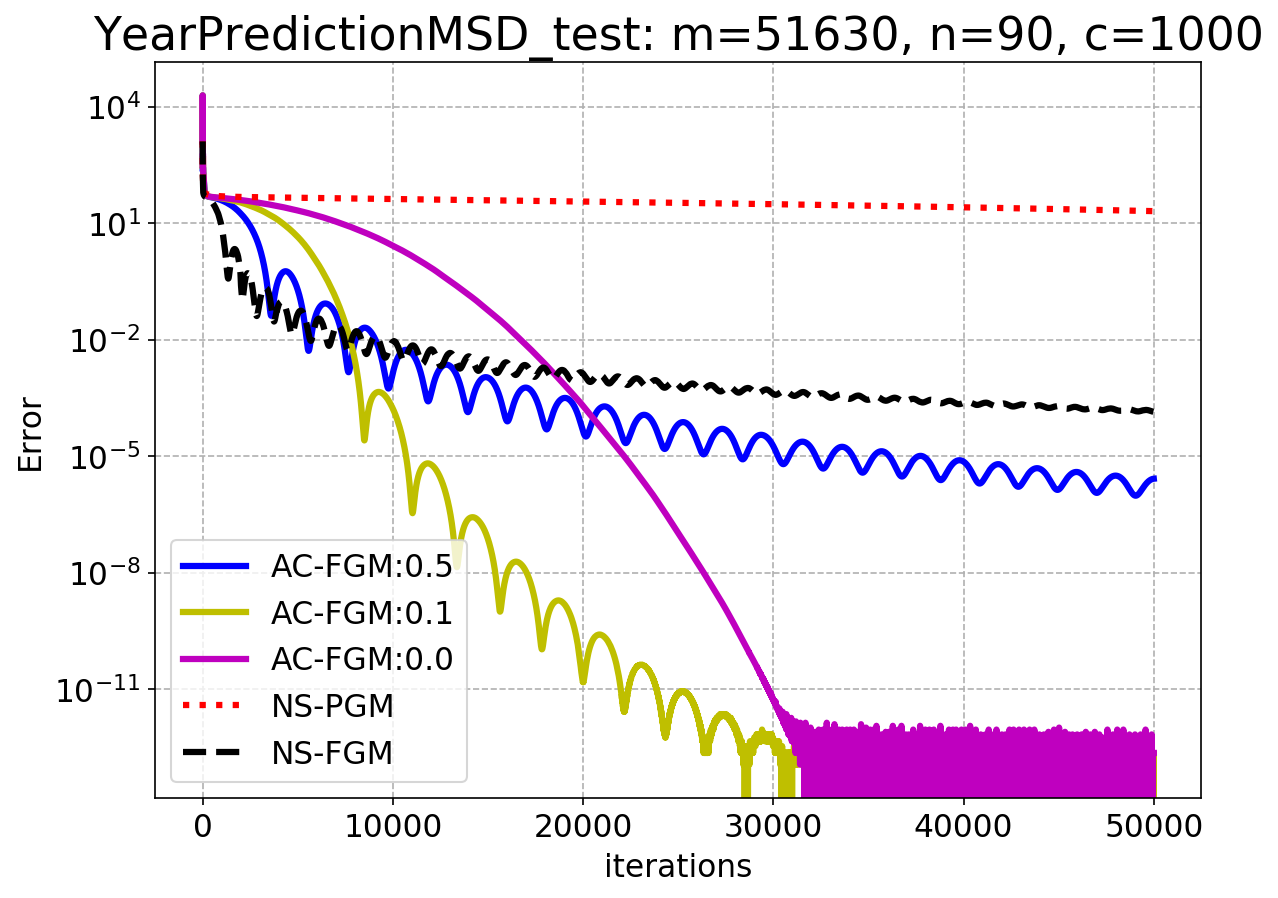}
	\end{minipage}
	\caption{Square root Lasso~\eqref{square_root_lasso_prob}: Comparison between AC-FGM, NS-AGD, NS-FGM, and AdGD in terms of the number of iterations for datasets \emph{gisette} (column 1) and \emph{YearPredictionMSD.test} (column 2). Row 1 takes $\lambda = 100\cdot m^{-1/2}\Phi^{-1}(1-0.01/n)$ and Row 2 takes $\lambda = 1000\cdot m^{-1/2}\Phi^{-1}(1-0.01/n)$. We set $\epsilon=10^{-8}$ when implementing the algorithms.}
	\label{fig_SRL_1}
\end{figure}

\begin{table}[h]\scriptsize  
	\centering
 \renewcommand\arraystretch{1.3}
 \tabcolsep=0.15cm
	\begin{tabular}{|c|c|c|ccc|ccc|}
 \hline
 \multirow{3}*{Instances} & \multirow{3}*{m} & \multirow{3}*{n}&\multicolumn{3}{c|}{\multirow{2}*{Avg CPU time (per 1000 iterations)}}& \multicolumn{3}{c|}{\multirow{2}*{Avg \# oracle calls (per iteration)}} \\
 & ~ & ~ & ~ & ~ & ~& ~ &~ & ~\\
	& ~ &	~ & AC-FGM & NS-PGM & NS-FGM & AC-FGM & NS-PGM & NS-FGM\\\hline
  gisette ($c=100$)& 6000 & 5000 & 56.61 & 56.11 & 213.62 & 1 &2.0 & 4.0\\
  gisette ($c=1000$)& 6000 & 5000& 59.16 & 129.11 & 228.05& 1 &2.0 & 4.0\\
  YearPredictMSD ($c=100$)  & 51630 & 90 & 12.63 & 26.02 & 42.31 & 1 &2.0 & 4.0\\
  YearPredictMSD ($c=1000$)& 51630 & 90 &  12.52 & 25.27  & 43.52 & 1 & 2.0 & 4.0
  \\\hline
	\end{tabular}
	\caption{Square root Lasso~\eqref{square_root_lasso_prob}: Comparison of the averaged CPU time in [sec] and averaged oracle calls per iteration.}\label{table_square_root_lasso}
\end{table}

\revision{}{From Figure~\ref{fig_SRL_1}, we observe that AC-FGM consistently outperforms NS-FGM in the experiments in terms of the number of iterations. Also notice that, in this set of experiments, the error may oscillate at a relatively high accuracy, which is expected given the design of these universal or uniform methods. In \emph{gistte} with $c=1000$, NS-PGM achieves slightly faster convergence than AC-FGM in terms of the number of iterations. However, due to the line search steps, NS-PGM's computational cost is roughly two times of AC-FGM (see Table~\ref{table_square_root_lasso}). Therefore, AC-FGM with $\alpha=0.0$ and $0.1$ still outperforms NS-PGM in terms of the overall efficiency.}

\subsection{Sparse logistic regression}\label{subsec_num_3}
In this subsection, we consider another classical convex composite optimization problem, known as the logistic regression with $\ell_1$-regularization, 
\begin{align}\label{logistic_regression}
\min_{x\in \bbr^n}~\Psi(x) := \tsum_{i=1}^m \log\left( 1 + \exp(-b_i \langle a_i,x\rangle)\right) + \lambda \|x\|_1,
\end{align}
where $\lambda >0$ is the regularization parameter, and $a_i \in \bbr^n$ and $b_i \in \{-1, 1\}$ represent the feature vector and the binary index of each sample, respectively. \revision{This problem can be viewed as a composite optimization of a smooth function and a nonsmooth function, i.e.,}{This problem can be equivalently written as}
\begin{align*}
\Psi(x) = f(x) + h(x) = \tilde f(Kx) + h(x),
\end{align*}
where $\tilde f(y) = \tsum_{i=1}^m \log (1+\exp(y_i))$, $h(x) = \lambda \|x\|_1$, and $K\in \bbr^{m \times n}$ satisfies $K_{i,j}=-a_{i}^{(j)}b_i$. Clearly, the Lipschitz constant of $\tilde f$ is bounded by $1/4$, thus the Lipschitz constant of $f$ can be upper bounded by $L = \lambda_{\max}(K^\top K)/4$. It is also noteworthy that the local smoothness constant of $\tilde f$ can be much smaller than $1/4$ as $y$ gets far away from the origin. Therefore, if we set the origin as the initial point $x_0$, the adaptive algorithms that can take advantage of the local smoothness level may have much faster convergence than the non-adaptive algorithms. 

\revision{}{We test the performance of the algorithms for solving \eqref{logistic_regression} on 4 large-scale datasets in LIBSVM, e.g., \emph{gisette (m=6000, n=5000)}, \emph{rcv1.binary (m=20242, n=47236)}, \emph{real-sim (m=72309, n=20958)}, and \emph{covtype.binary (m=581012, n=54)}, and take two choices of penalty parameters, $\lambda = 0.001\|A^\top b\|_\infty$ and $0.005\|A^\top b\|_\infty$, with $A=[a_1^T;\ldots, a_m^T]$ and $b=(b_1;\ldots,b_m)$. The results are reported in Figure~\ref{fig:logistic_regression}, Figure~\ref{fig:logistic_regression_2} and Table~\ref{table_logistic_regression}.\footnote{The performance of the algorithms on more problem instances in LIBSVM can be found in our publicly available repository (\url{https://github.com/tli432/AC-FGM-Implementation}).}
}

\begin{figure}[h]
	\centering
 \begin{minipage}[t]{0.45\linewidth}
		\centering
		\includegraphics[width=5cm]{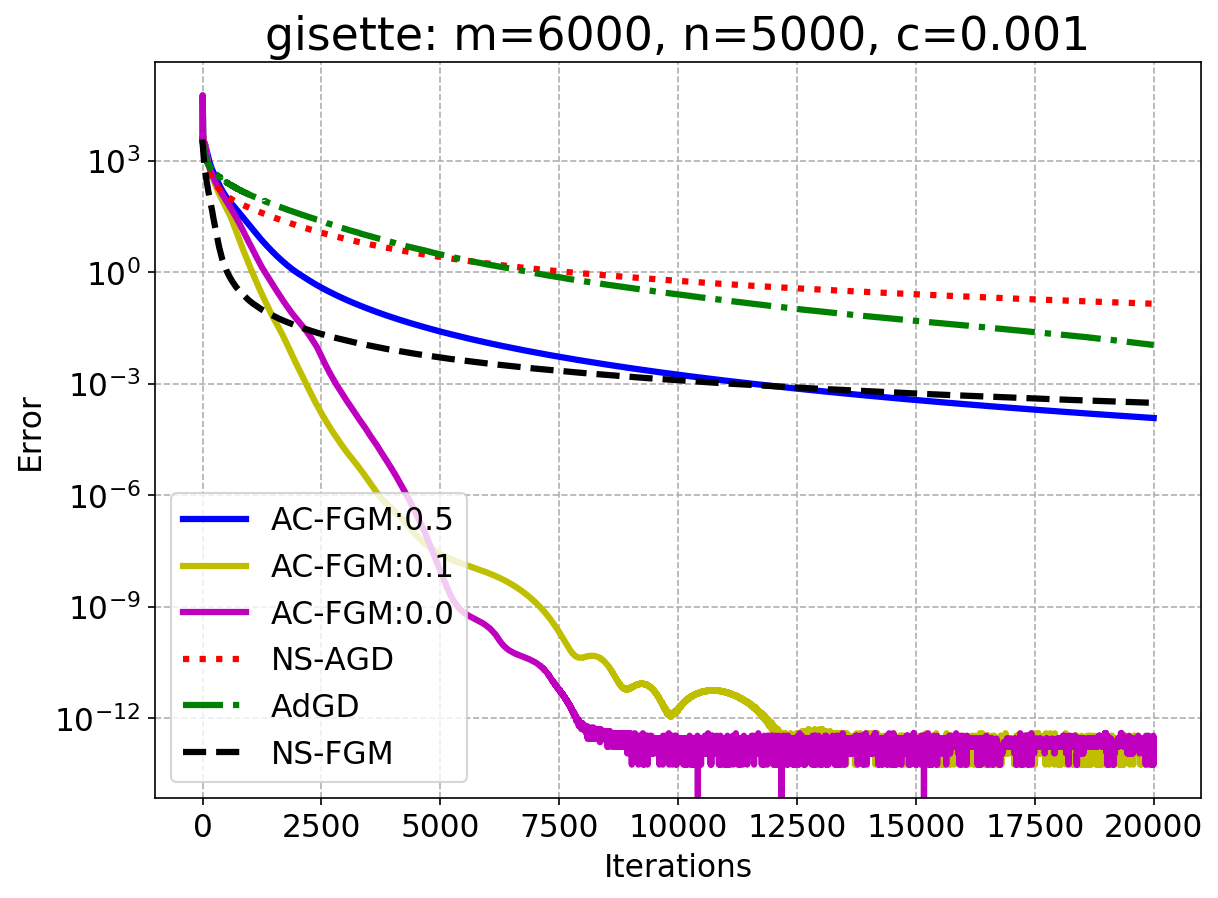}
	\end{minipage}
 \begin{minipage}[t]{0.45\linewidth}
		\centering
		\includegraphics[width=5cm]{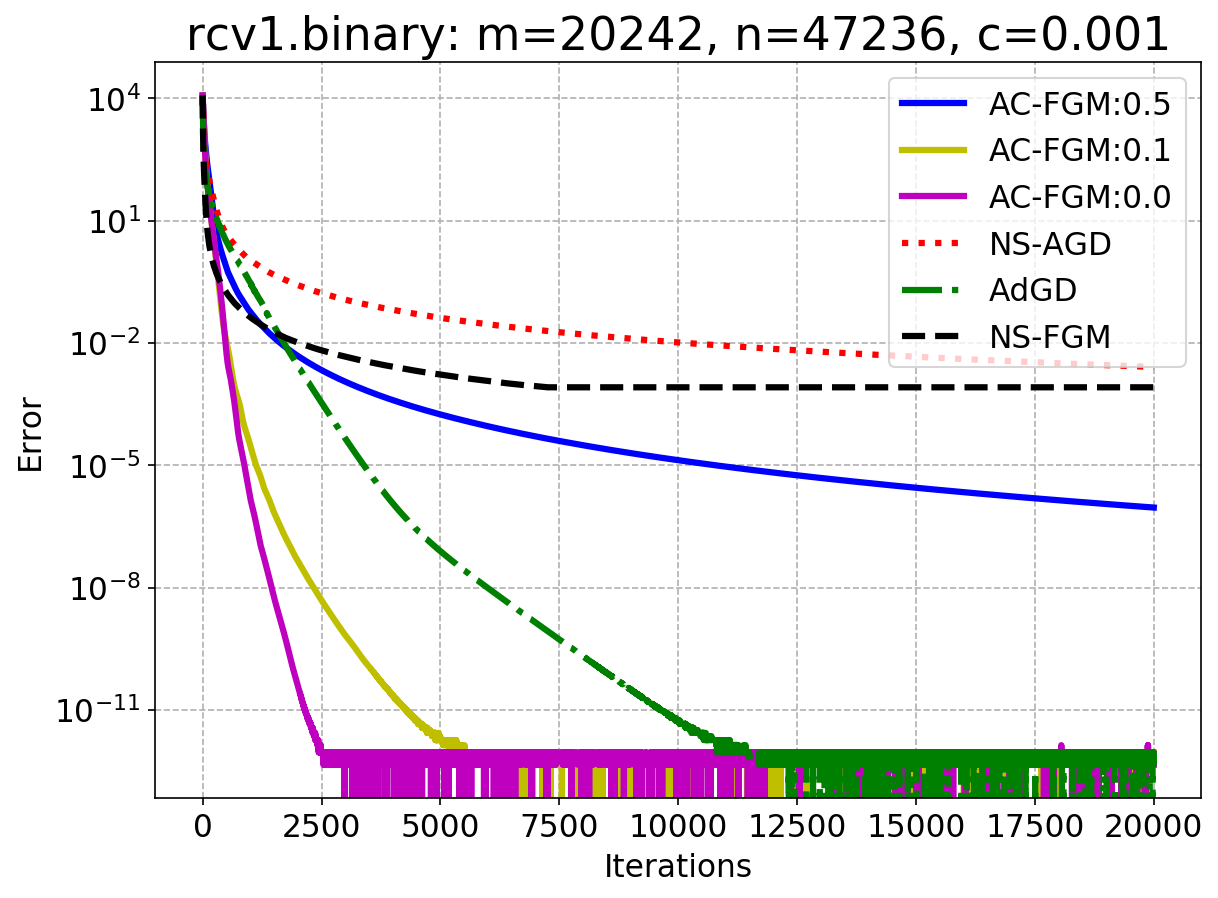}
	\end{minipage}
 \begin{minipage}[t]{0.45\linewidth}
		\centering
		\includegraphics[width=5cm]{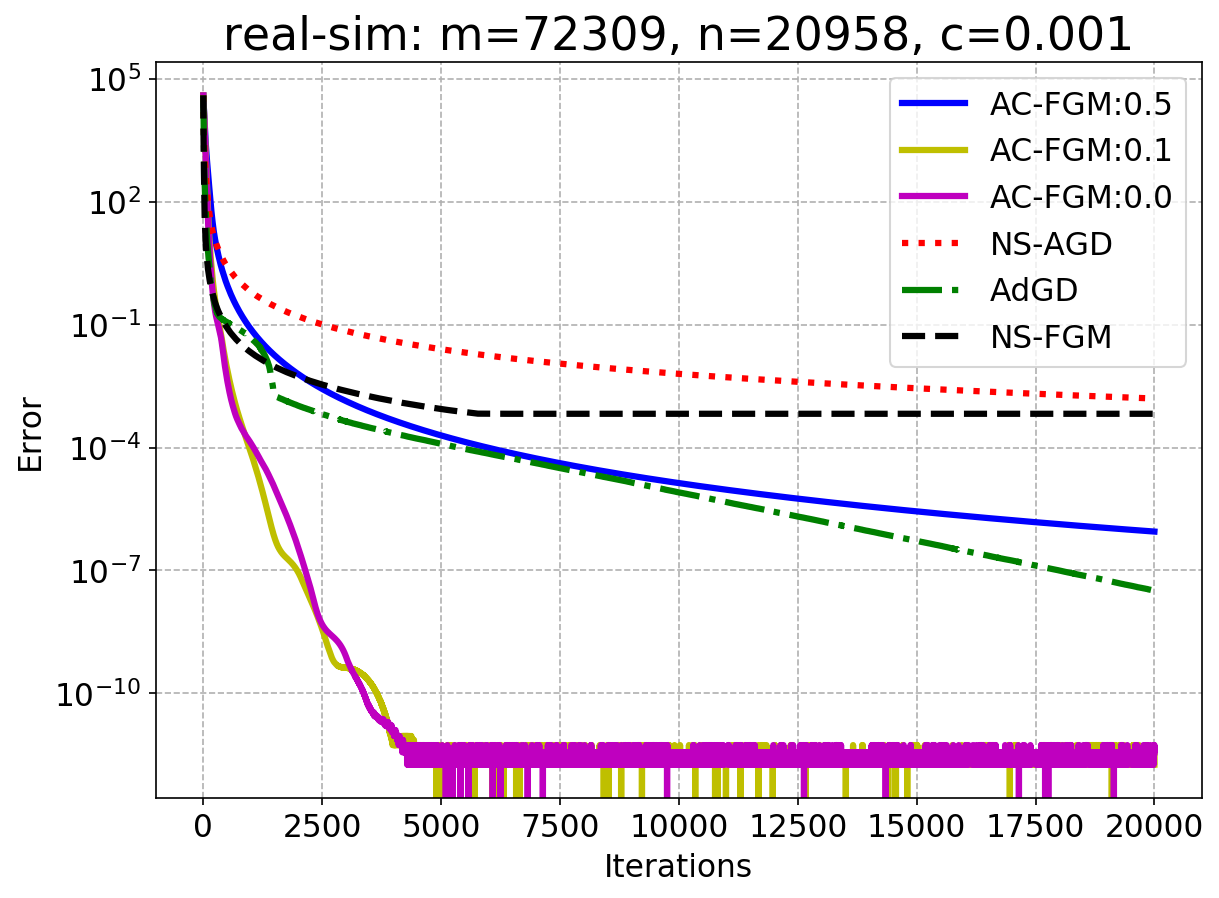}
	\end{minipage}
 \begin{minipage}[t]{0.45\linewidth}
		\centering
		\includegraphics[width=5cm]{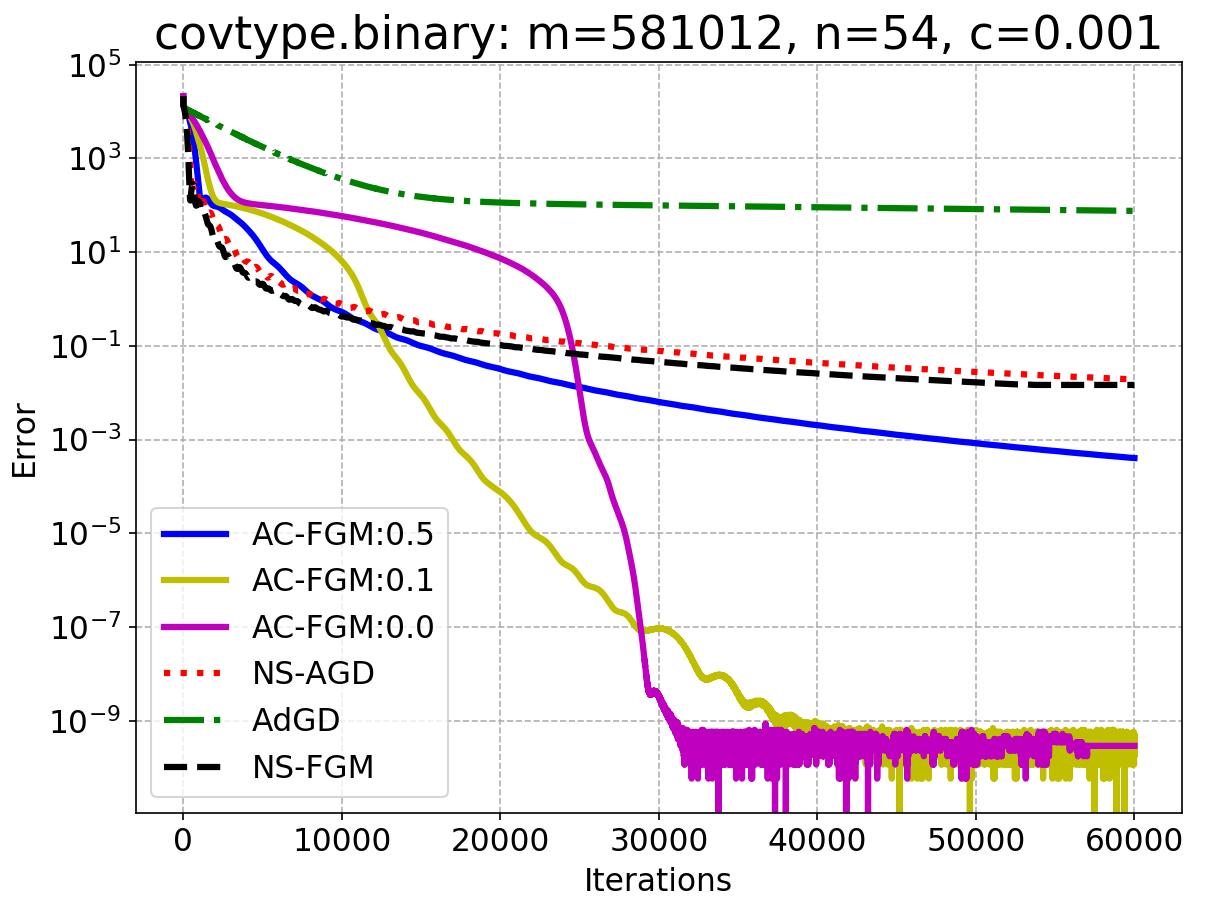}
	\end{minipage}
	\caption{Sparse logistic regression~\eqref{logistic_regression}: Comparison between AC-FGM, NS-AGD, NS-FGM, and AdGD in terms of the number of iterations for datasets \emph{gisette}, \emph{rcv1.binary}, \emph{real-sim}, and \emph{covtype.binary}. Penalty parameter: $\lambda = 0.001\|A^\top b\|_\infty$.}
	\label{fig:logistic_regression}
\end{figure}

\begin{figure}[h]
	\centering
 \begin{minipage}[t]{0.45\linewidth}
		\centering
		\includegraphics[width=5cm]{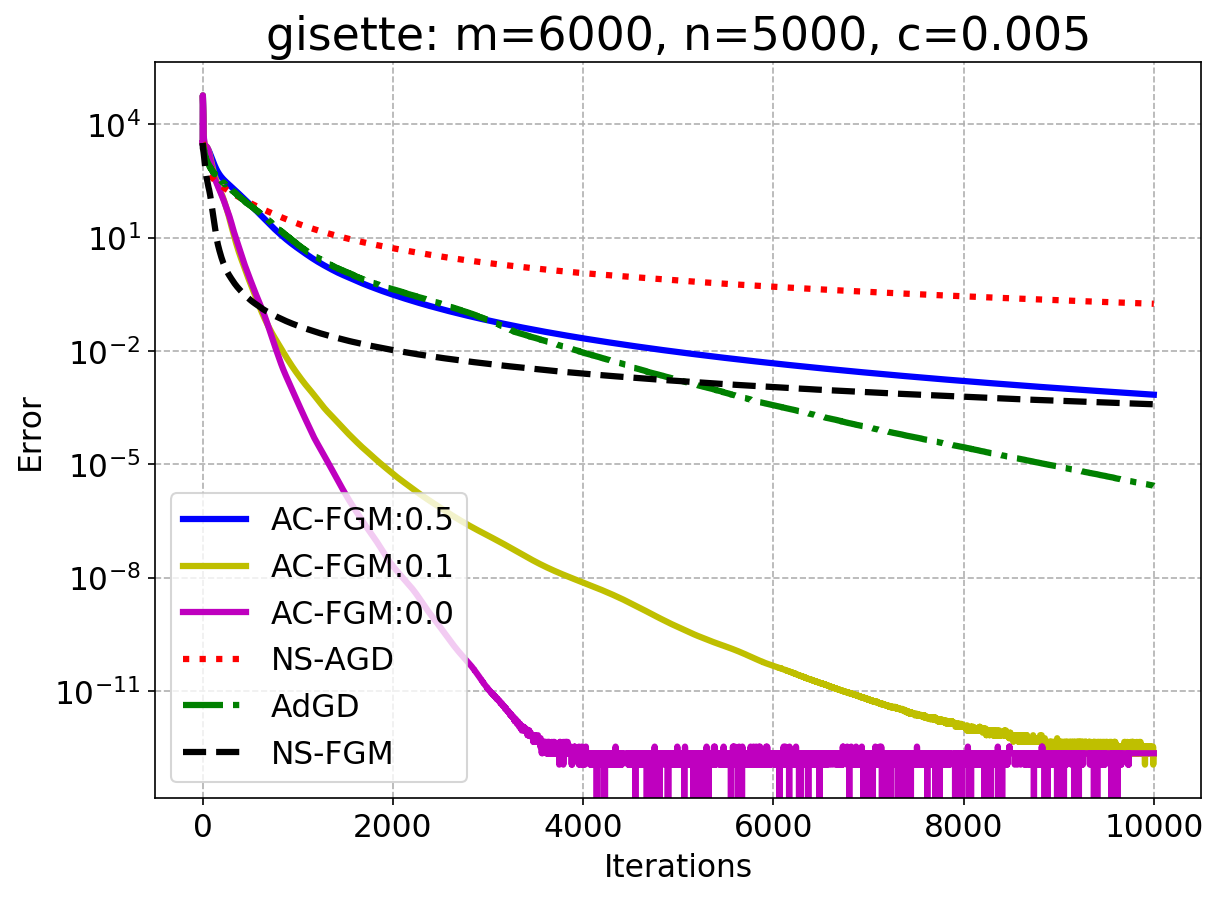}
	\end{minipage}
 \begin{minipage}[t]{0.45\linewidth}
		\centering
		\includegraphics[width=5cm]{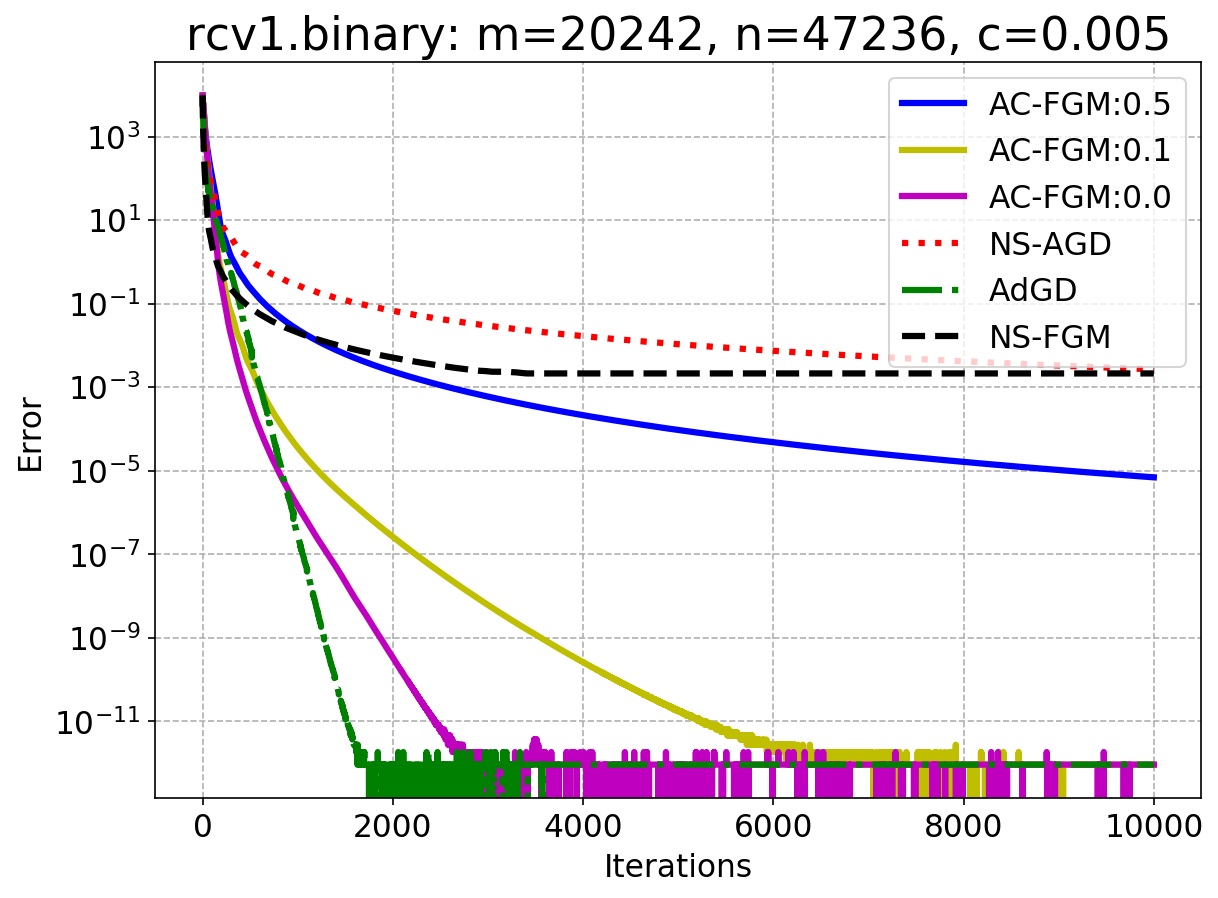}
	\end{minipage}
 \begin{minipage}[t]{0.45\linewidth}
		\centering
		\includegraphics[width=5cm]{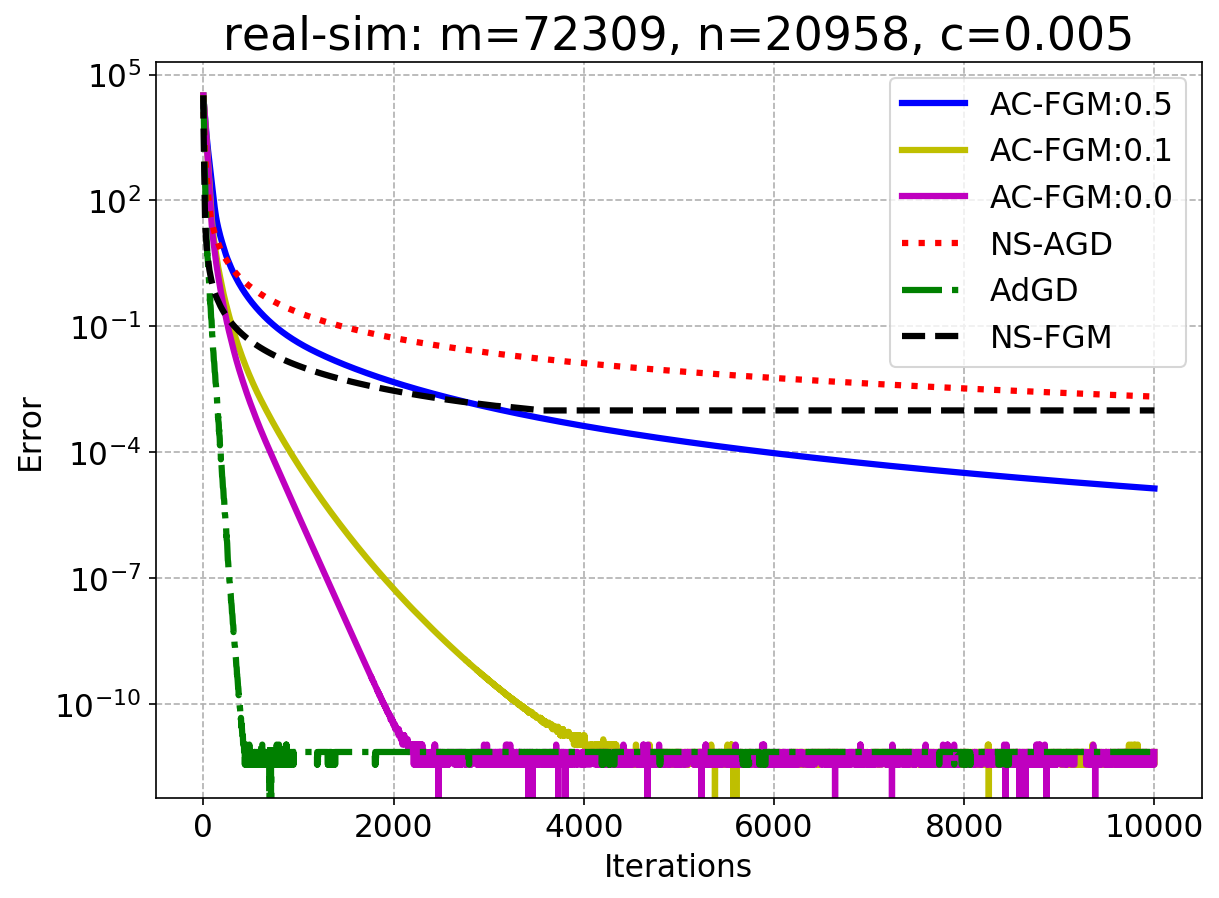}
	\end{minipage}
 \begin{minipage}[t]{0.45\linewidth}
		\centering
		\includegraphics[width=5cm]{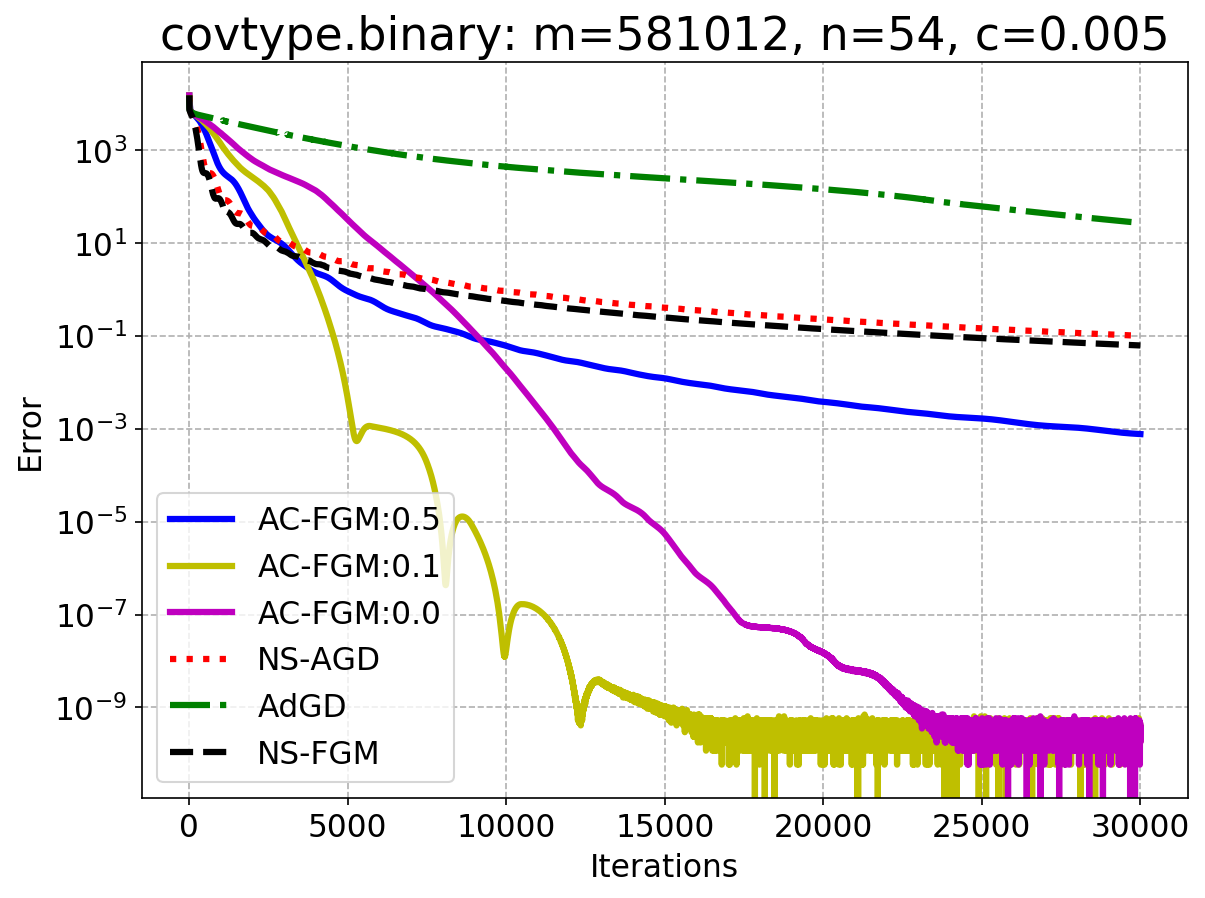}
	\end{minipage}
	\caption{Sparse logistic regression~\eqref{logistic_regression}: Comparison between AC-FGM, NS-AGD, NS-FGM, and AdGD in terms of the number of iterations for datasets \emph{gisette}, \emph{rcv1.binary}, \emph{real-sim}, and \emph{covtype.binary}. Penalty parameter: $\lambda = 0.005\|A^\top b\|_\infty$.}
	\label{fig:logistic_regression_2}
\end{figure}

\begin{table}[h]\scriptsize  
	\centering
 \renewcommand\arraystretch{1.3}
 \tabcolsep=0.15cm
	\begin{tabular}{|c|c|c|ccc|ccc|}
 \hline
 \multirow{3}*{Instances} & \multirow{3}*{m} & \multirow{3}*{n}&\multicolumn{3}{c|}{\multirow{2}*{Avg CPU time (per 1000 iterations)}}& \multicolumn{3}{c|}{\multirow{2}*{Avg \# oracle calls (per iteration)}} \\
 & ~ & ~ & ~ & ~ & ~& ~ &~ & ~\\
	& ~ &	~ & AC-FGM & AdGD & NS-FGM & AC-FGM & AdGD & NS-FGM\\\hline
  gisette ($c=0.001$)& 6000 & 5000 & 61.26 & 61.86 & 198.53 & 1 &1 & 4.0\\
  gisette ($c=0.005$)& 6000 & 5000& 61.07 & 62.12 & 195.15 & 1 &1 & 4.0\\
  rcv1.binary ($c=0.001$)  & 20242 & 47236 & 16.10 & 15.90 & 56.11 & 1 &1 & 4.0\\
  rcv1.binary ($c=0.005$)& 20242 & 47236 &  16.43 & 15.65  & 54.84 & 1 & 1 & 4.0\\
  real-sim ($c=0.001$)& 72309 & 20958 & 39.42 & 39.19 & 149.78 & 1 &1 & 4.0\\
  real-sim ($c=0.005$)& 72309 & 20958 & 37.16 & 38.62 & 148.25 & 1 &1 & 4.0\\
  covtype.binary ($c=0.001$)  & 581012 & 54 & 47.94  & 42.27 & 172.68 & 1 &1 & 4.0\\
  covtype.binary ($c=0.005$)& 581012 & 54 &  46.98 & 40.32  & 173.01 & 1 & 1 & 4.0
  \\\hline
	\end{tabular}
	\caption{Sparse logistic regression~\eqref{logistic_regression}: Comparison of the averaged CPU time in [sec] and averaged oracle calls per iteration.}\label{table_logistic_regression}
\end{table}

\revision{}{Figure~\ref{fig:logistic_regression} and Figure~\ref{fig:logistic_regression_2} indicate that AC-FGM with $\alpha=0.0$ and $0.1$ outperform NS-FGM in almost all test cases. We also notice that AdGD converges fast in \emph{real-sim} and \emph{rcv.binary} when $\lambda = 0.005\|A^\top b\|_\infty$. However, AC-FGM appears to be more robust to different choices of penalty parameters.}

\subsection{Ablation analysis for line search in first iteration}\label{sec_ablation_analysis}
\revision{}{In this subsection, we conduct ablation analysis for the line search procedure in the first iteration of AC-FGM. Specifically, we run the experimental group with line search in the first iteration and the control group without line search in the first iteration. For the control group, the initial stepsize is chosen in the same way as the previous subsections, i.e., $\eta_1:=\tfrac{2}{5 L_0}$ where $L_0$ is defined in \eqref{def_L_0}. For the experimental group, we set the line search constant $\gamma = 1.5$ and start with a relatively large stepsize to ensure that the line search terminates after at least 2 steps. 
We re-use three problem instances from the previous sections to conduct the experiments, covering Lasso \eqref{QP_prob_with_lasso}, square root Lasso \eqref{square_root_lasso_prob}, and sparse logistic regression \eqref{logistic_regression}. From Figure~\ref{ablation_analysis}, we observe that there is no significant performance difference between the experimental group and control group, demonstrating that the long-term performance of AC-FGM is not sensitive to the line search step in the initial iteration. }

\begin{figure}[H]
	\centering
	\begin{minipage}[t]{0.3\linewidth}
		\centering
		\includegraphics[width=4.3cm]{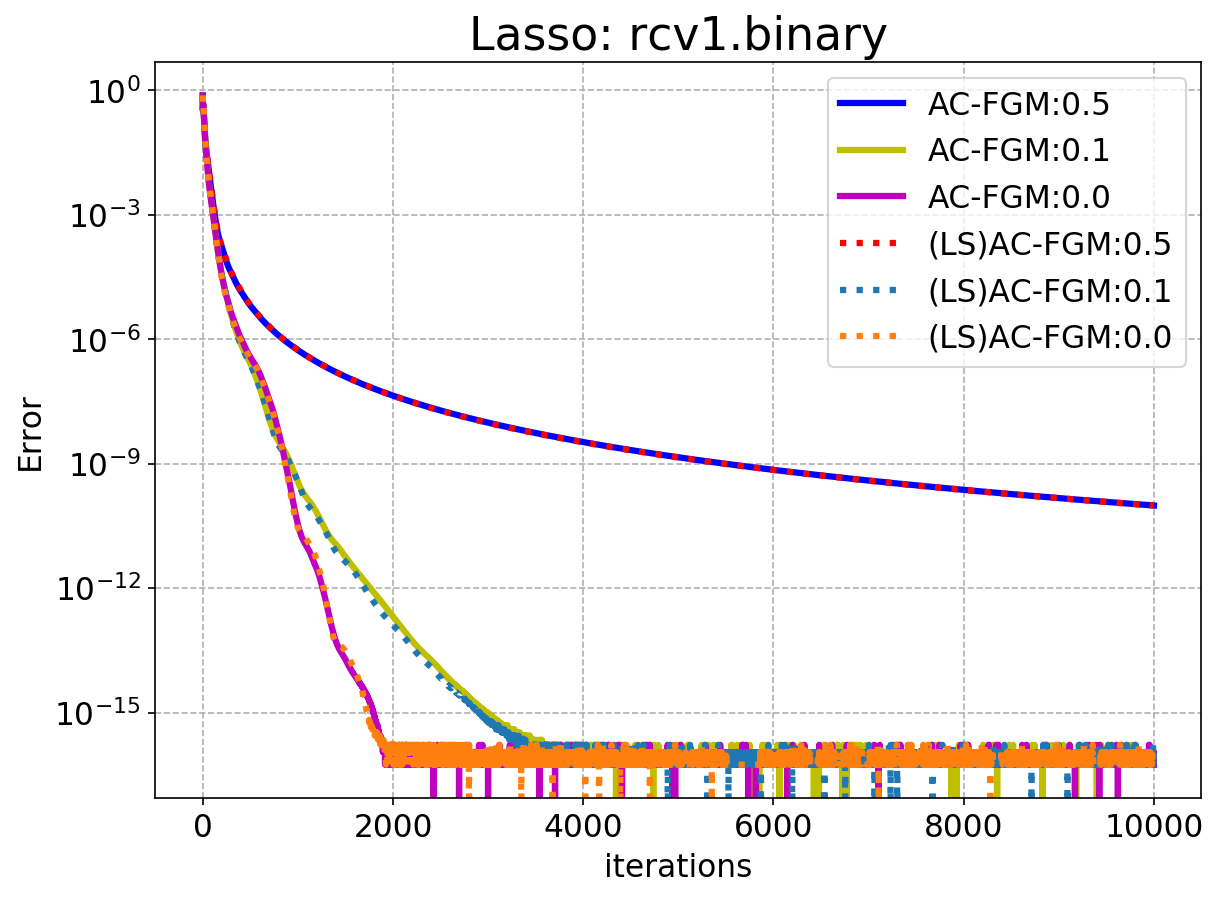}
	\end{minipage}
	\begin{minipage}[t]{0.3\linewidth}
		\centering
		\includegraphics[width=4.3cm]{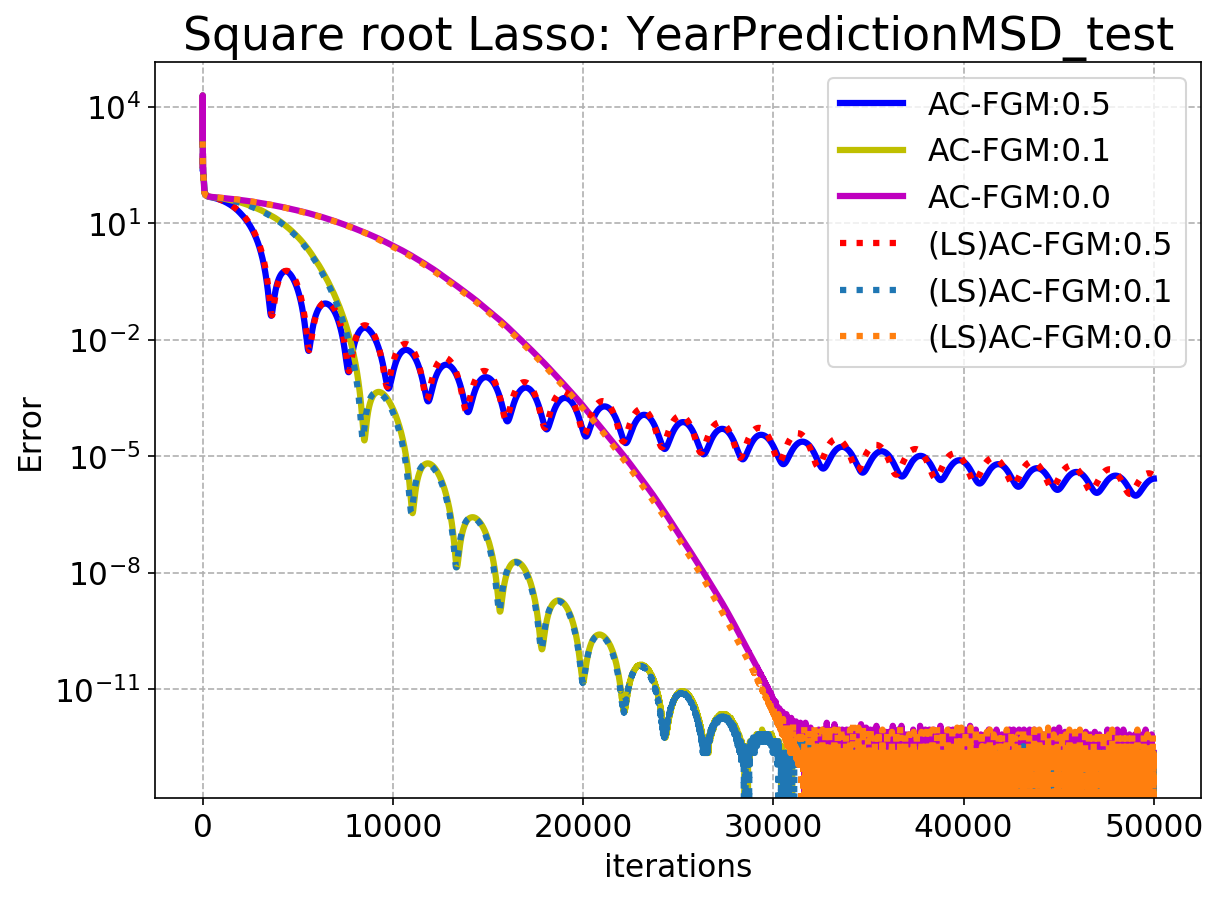}
	\end{minipage}
 \begin{minipage}[t]{0.3\linewidth}
		\centering
		\includegraphics[width=4.3cm]{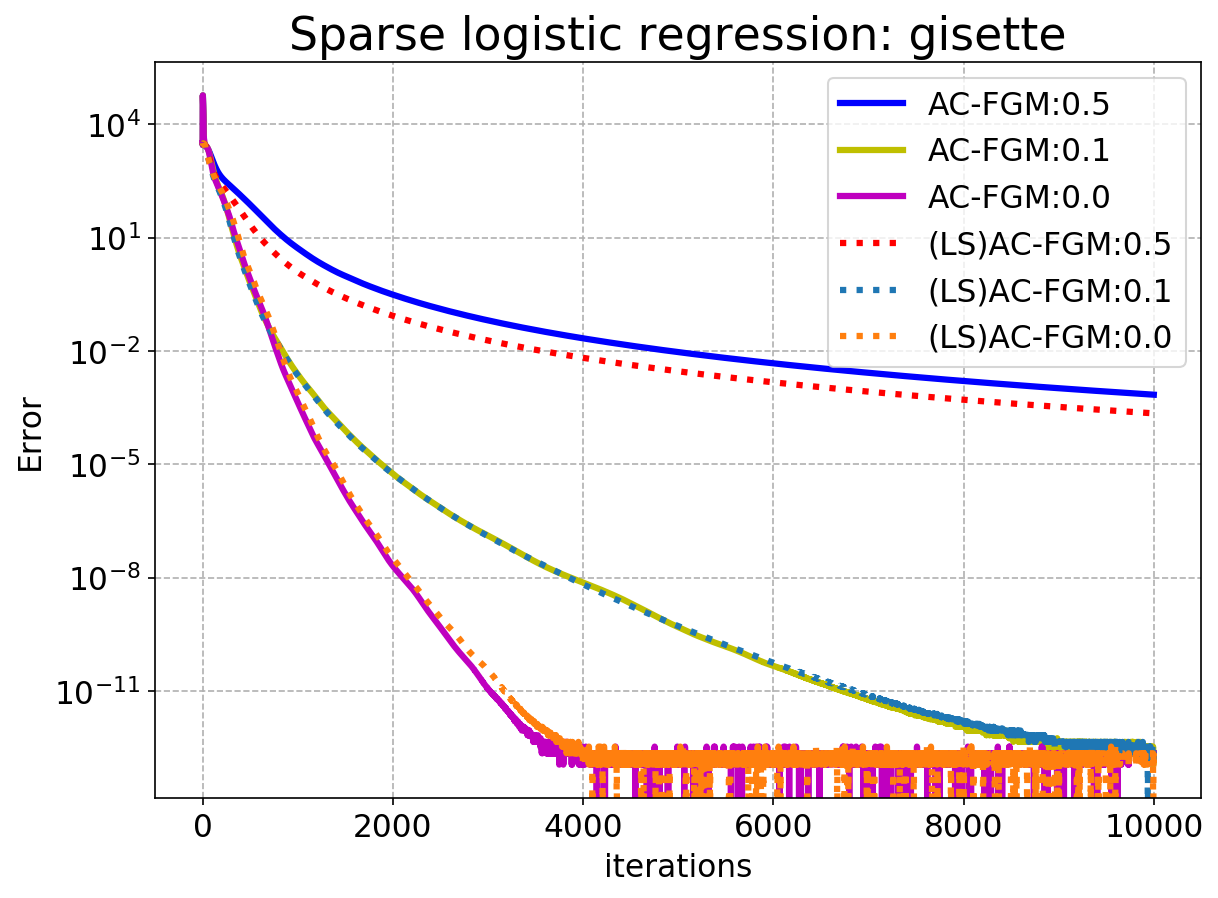}
	\end{minipage}
	\caption{Ablation analysis on initial line search: Test on \emph{rcv1.binary (c=0.01)} in Lasso \eqref{QP_prob_with_lasso}, \emph{YearPredictionMSD.test (c=1000)} in square root Lasso \eqref{square_root_lasso_prob}, and \emph{gisette (c=0.005)} in sparse logistic regression \eqref{logistic_regression}. ``LS'' represents the experimental group with initial line search. }
	\label{ablation_analysis}
\end{figure}

\section{Concluding remarks}
In summary, we design a novel accelerated gradient decent type algorithm, AC-FGM, for convex composite optimization. We first show that AC-FGM is fully problem parameter free, line search free, and optimal for smooth convex problems. Then we extend AC-FGM to solve convex problems with H\"{o}lder continuous gradients and demonstrate its uniform optimality for all smooth, weakly smooth, and nonsmooth objectives, \revision{}{with the desired accuracy of the solution as the only input.} 
At last, we present some numerical experiments, where AC-FGM appears to be more advantageous over the previously developed parameter-free methods on a wide range of testing problems.

\renewcommand\refname{Reference}

\bibliographystyle{abbrv}
\bibliography{arxiv_20240816}

\newcommand{\noopsort}[1]{} \newcommand{\printfirst}[2]{#1}
  \newcommand{\singleletter}[1]{#1} \newcommand{\switchargs}[2]{#2#1}
\begin{thebibliography}{10}

\bibitem{alacaoglu2023beyond}
A.~Alacaoglu, A.~B{\"o}hm, and Y.~Malitsky.
\newblock Beyond the golden ratio for variational inequality algorithms.
\newblock {\em Journal of Machine Learning Research}, 24(172):1--33, 2023.

\bibitem{armijo1966minimization}
L.~Armijo.
\newblock Minimization of functions having {L}ipschitz continuous first partial
  derivatives.
\newblock {\em Pacific Journal of mathematics}, 16(1):1--3, 1966.

\bibitem{belloni2011square}
A.~Belloni, V.~Chernozhukov, and L.~Wang.
\newblock Square-root lasso: pivotal recovery of sparse signals via conic
  programming.
\newblock {\em Biometrika}, 98(4):791--806, 2011.

\bibitem{carmon2022making}
Y.~Carmon and O.~Hinder.
\newblock Making {SGD} parameter-free.
\newblock In {\em Conference on Learning Theory}, pages 2360--2389. PMLR, 2022.

\bibitem{chaudhuri2009parameter}
K.~Chaudhuri, Y.~Freund, and D.~J. Hsu.
\newblock A parameter-free hedging algorithm.
\newblock {\em Advances in neural information processing systems}, 22, 2009.

\bibitem{cutkosky2017online}
A.~Cutkosky and K.~Boahen.
\newblock Online learning without prior information.
\newblock In {\em Conference on learning theory}, pages 643--677. PMLR, 2017.

\bibitem{cutkosky2018black}
A.~Cutkosky and F.~Orabona.
\newblock Black-box reductions for parameter-free online learning in banach
  spaces.
\newblock In {\em Conference On Learning Theory}, pages 1493--1529. PMLR, 2018.

\bibitem{defazio2023learning}
A.~Defazio and K.~Mishchenko.
\newblock Learning-rate-free learning by {D}-adaptation.
\newblock {\em arXiv preprint arXiv:2301.07733}, 2023.

\bibitem{ene2021adaptive}
A.~Ene, H.~L. Nguyen, and A.~Vladu.
\newblock Adaptive gradient methods for constrained convex optimization and
  variational inequalities.
\newblock In {\em Proceedings of the AAAI Conference on Artificial
  Intelligence}, volume~35, pages 7314--7321, 2021.

\bibitem{grimmer2023optimal}
B.~Grimmer.
\newblock On optimal universal first-order methods for minimizing heterogeneous
  sums.
\newblock {\em Optimization Letters}, pages 1--19, 2023.

\bibitem{ilandarideva2023accelerated}
S.~Ilandarideva, A.~Juditsky, G.~Lan, and T.~Li.
\newblock Accelerated stochastic approximation with state-dependent noise.
\newblock {\em arXiv preprint arXiv:2307.01497}, 2023.

\bibitem{ivgi2023dog}
M.~Ivgi, O.~Hinder, and Y.~Carmon.
\newblock Dog is {SGD}'s best friend: A parameter-free dynamic step size
  schedule.
\newblock {\em arXiv preprint arXiv:2302.12022}, 2023.

\bibitem{khaled2023dowg}
A.~Khaled, K.~Mishchenko, and C.~Jin.
\newblock {DoWG} {Unleashed}: An efficient universal parameter-free gradient
  descent method.
\newblock {\em arXiv preprint arXiv:2305.16284}, 2023.

\bibitem{kotsalis2022simple1}
G.~Kotsalis, G.~Lan, and T.~Li.
\newblock Simple and optimal methods for stochastic variational inequalities,
  i: Operator extrapolation.
\newblock {\em SIAM Journal on Optimization}, 32(3):2041--2073, 2022.

\bibitem{kotsalis2022simple}
G.~Kotsalis, G.~Lan, and T.~Li.
\newblock Simple and optimal methods for stochastic variational inequalities,
  ii: Markovian noise and policy evaluation in reinforcement learning.
\newblock {\em SIAM Journal on Optimization}, 32(2):1120--1155, 2022.

\bibitem{lan2012optimal}
G.~Lan.
\newblock An optimal method for stochastic composite optimization.
\newblock {\em Mathematical Programming}, 133(1-2):365--397, 2012.

\bibitem{Lan13-1}
G.~Lan.
\newblock Bundle-level type methods uniformly optimal for smooth and non-smooth
  convex optimization.
\newblock {\em Mathematical Programming}, 149(1):1–45, 2015.

\bibitem{LanBook2020}
G.~Lan.
\newblock {\em First-order and stochastic optimization methods for machine
  learning}, volume~1.
\newblock Springer, 2020.

\bibitem{LanOuyangZhang2023}
G.~Lan, Y.~Ouyang, and Z.~Zhang.
\newblock Optimal and parameter-free gradient minimization methods for convex
  and nonconvex optimization.
\newblock {\em arXiv preprint arXiv:2310.12139}, 2023.

\bibitem{lan2015optimal}
G.~Lan and Y.~Zhou.
\newblock An optimal randomized incremental gradient method.
\newblock {\em Mathematical programming}, 2015.
\newblock https://arxiv.org/abs/1507.02000.

\bibitem{lan2018random}
G.~Lan and Y.~Zhou.
\newblock Random gradient extrapolation for distributed and stochastic
  optimization.
\newblock {\em SIAM Journal on Optimization}, 28(4):2753--2782, 2018.

\bibitem{latafat2023adaptive}
P.~Latafat, A.~Themelis, L.~Stella, and P.~Patrinos.
\newblock Adaptive proximal algorithms for convex optimization under local
  {L}ipschitz continuity of the gradient.
\newblock {\em arXiv preprint arXiv:2301.04431}, 2023.

\bibitem{LNN}
C.~Lemar\'{e}chal, A.~S. Nemirovski, and Y.~E. Nesterov.
\newblock New variants of bundle methods.
\newblock {\em Mathematical Programming}, 69:111--148, 1995.

\bibitem{levy2017online}
K.~Levy.
\newblock Online to offline conversions, universality and adaptive minibatch
  sizes.
\newblock {\em Advances in Neural Information Processing Systems}, 30, 2017.

\bibitem{levy2018online}
K.~Y. Levy, A.~Yurtsever, and V.~Cevher.
\newblock Online adaptive methods, universality and acceleration.
\newblock {\em Advances in neural information processing systems}, 31, 2018.

\bibitem{li2019convergence}
X.~Li and F.~Orabona.
\newblock On the convergence of stochastic gradient descent with adaptive
  stepsizes.
\newblock In {\em The 22nd international conference on artificial intelligence
  and statistics}, pages 983--992. PMLR, 2019.

\bibitem{liang2023unified}
J.~Liang and R.~D. Monteiro.
\newblock A unified analysis of a class of proximal bundle methods for solving
  hybrid convex composite optimization problems.
\newblock {\em Mathematics of Operations Research}, 2023.

\bibitem{lu2023accelerated}
Z.~Lu and S.~Mei.
\newblock Accelerated first-order methods for convex optimization with locally
  {L}ipschitz continuous gradient.
\newblock {\em SIAM Journal on Optimization}, 33(3):2275--2310, 2023.

\bibitem{malitsky2020golden}
Y.~Malitsky.
\newblock Golden ratio algorithms for variational inequalities.
\newblock {\em Mathematical Programming}, 184(1-2):383--410, 2020.

\bibitem{malitsky2019adaptive}
Y.~Malitsky and K.~Mishchenko.
\newblock Adaptive gradient descent without descent.
\newblock {\em arXiv preprint arXiv:1910.09529}, 2019.

\bibitem{malitsky2023adaptive}
Y.~Malitsky and K.~Mishchenko.
\newblock Adaptive proximal gradient method for convex optimization.
\newblock {\em arXiv preprint arXiv:2308.02261}, 2023.

\bibitem{mhammedi2020lipschitz}
Z.~Mhammedi and W.~M. Koolen.
\newblock Lipschitz and comparator-norm adaptivity in online learning.
\newblock In {\em Conference on Learning Theory}, pages 2858--2887. PMLR, 2020.

\bibitem{moreau2022benchopt}
T.~Moreau, M.~Massias, A.~Gramfort, P.~Ablin, P.-A. Bannier, B.~Charlier,
  M.~Dagr{\'e}ou, T.~Dupre~la Tour, G.~Durif, C.~F. Dantas, et~al.
\newblock Benchopt: Reproducible, efficient and collaborative optimization
  benchmarks.
\newblock {\em Advances in Neural Information Processing Systems},
  35:25404--25421, 2022.

\bibitem{nemirovski1983problem}
A.~Nemirovski and D.~Yudin.
\newblock {\em Problem complexity and method efficiency in optimization}.
\newblock Wiley-Interscience Series in Discrete Mathematics. John Wiley, XV,
  1983.

\bibitem{NemirovskiYudin83-smooth}
A.~Nemirovskii and D.~Yudin.
\newblock Information-based complexity of mathematical programming.
\newblock {\em Engineering Cybernetics}, 1:76--100, 1983.

\bibitem{nemirovskii1985optimal}
A.~S. Nemirovskii and Y.~E. Nesterov.
\newblock Optimal methods of smooth convex minimization.
\newblock {\em USSR Computational Mathematics and Mathematical Physics},
  25(2):21--30, 1985.

\bibitem{nesterov1983method}
Y.~Nesterov.
\newblock A method for unconstrained convex minimization problem with the rate
  of convergence {$O(1/k^2)$}.
\newblock In {\em Doklady an ussr}, volume 269, pages 543--547, 1983.

\bibitem{nesterov2015universal}
Y.~Nesterov.
\newblock Universal gradient methods for convex optimization problems.
\newblock {\em Mathematical Programming}, 152(1-2):381--404, 2015.

\bibitem{orabona2023normalized}
F.~Orabona.
\newblock Normalized gradients for all.
\newblock {\em arXiv preprint arXiv:2308.05621}, 2023.

\bibitem{orabona2020tutorial}
F.~Orabona and A.~Cutkosky.
\newblock {ICML} 2020 tutorial on parameter-free stochastic optimization.
\newblock 2020.

\bibitem{orabona2016coin}
F.~Orabona and D.~P{\'a}l.
\newblock Coin betting and parameter-free online learning.
\newblock {\em Advances in Neural Information Processing Systems}, 29, 2016.

\bibitem{renegar2022simple}
J.~Renegar and B.~Grimmer.
\newblock A simple nearly optimal restart scheme for speeding up first-order
  methods.
\newblock {\em foundations of computational mathematics}, 22(1):211--256, 2022.

\bibitem{roulet2017sharpness}
V.~Roulet and A.~d'Aspremont.
\newblock Sharpness, restart and acceleration.
\newblock {\em Advances in Neural Information Processing Systems}, 30, 2017.

\bibitem{streeter2012no}
M.~Streeter and H.~B. McMahan.
\newblock No-regret algorithms for unconstrained online convex optimization.
\newblock {\em arXiv preprint arXiv:1211.2260}, 2012.

\end{thebibliography}

\end{document}